\newtheorem{theorem}{Theorem}
\newtheorem{lemma}{Lemma}
\newtheorem{prop}{Proposition}
\newtheorem{remark}{Remark}
\newenvironment{proof}{\medskip \noindent
{\bf Proof.}}{\hfill \rule{.5em}{1em}
\\}
\def\bea{\begin{eqnarray*}}
\def\eea{\end{eqnarray*}}
\def\be{\begin{equation}}
\def\ee{\end{equation}}
\begin{document}

\title{Classification of Gradient Ricci solitons  with harmonic Weyl curvature}

\author{ Jongsu Kim
\thanks{This research was supported by Basic Science Research Program through the National Research Foundation of Korea (NRF) funded by the Ministry of Science, ICT and Future Planning (NRF-2020R1A2B5B01001862).
Keywords: gradient Ricci soliton,  harmonic Weyl curvature, MS Classification(2010): 53C21, 53C25} }






\maketitle

\begin{abstract}
We make classifications  of gradient Ricci solitons $(M, g, f)$ with harmonic Weyl curvature.   As a local classification, we  prove that the soliton metric $g$  is locally isometric to  one of the following four types: an Einstein manifold, the Riemannian product of a  Ricci flat manifold and an Einstein manifold, a warped product of $\mathbb{R}$ and an Einstein manifold,  and a singular warped product of $\mathbb{R}^2$ and a Ricci flat manifold.
Compared with the previous four-dimensional study in \cite{Ki},   we have developed a novel method of {\it refined adapted frame fields} and overcome the main difficulty arising from a large number of Riemmannian connection components in  dimension$ \geq 5$.

 Next we have obtained a classification of {\it complete} gradient Ricci solitons with harmonic Weyl curvature. For the proof, using the real analytic nature of $g$ and $f$,  we elaborate geometric arguments to fit together local regions.
\end{abstract}



\newtheorem{conj}[theorem]{Conjecture}

\section{Introduction}

 A gradient Ricci soliton (henceforth abbreviated as GRS) is a Riemannian manifold $(M, g)$ together with a smooth function $f$ satisfying
\begin{equation} \label{grs}
\nabla d f =  -r + \lambda g,
\end{equation}
where $r$ is the Ricci tensor of $g$ and $\lambda$  a constant.
  GRS is essential in Hamilton's Ricci flow theory as
singularity models of the flow. They have been studied extensively for decades from diverse points of view.

There are a number of literatures
concerned with  geometric characterization of gradient Ricci solitons.  For two or three dimensional GRS,   one may refer to
 \cite{BM,  Br, Cao1,  Iv3, NW, P} and references therein.
For higher dimension,  several hypotheses on the Weyl curvature tensor $W$ have been studied.
Indeed, complete shrinking (i.e. $\lambda>0$) GRS with $W=0$ are classified in \cite{Z} to be a finite quotient of $\mathbb{R}^n$, $\mathbb{S}^n$, or $\mathbb{S}^{n-1} \times \mathbb{R}$, $n \geq  4$
; see also \cite{CWZ,ELM, NW, PW2}. Note that in dimension$\geq 4$, a Riemannian manifold has   $W=0$ if and only if  it is  locally conformally flat.
A complete locally conformally flat steady  (i.e. $\lambda=0$) GRS is  shown to be either flat or isometric to the Bryant soliton \cite{CC1, CM}.
The four dimensional half conformally flat (i.e. $W^{+} =0$ or $W^{-} =0$) GRS with $\lambda \geq 0$ are studied in \cite{CW}, where  $W^{\pm}$ is the positive or negative part of the Weyl curvature tensor $W$.
Bach-flat GRS with $\lambda \geq 0$  are studied in \cite{CCCMM, CC2}. In \cite{WWW}, any four dimensional  shrinking GRS with   $\delta W^{+}=0$ is shown to be rigid; here a GRS is said to be rigid if it is isometric to a
quotient of the Riemannian product $N \times \mathbb{R}^k$ where $N$ is an Einstein manifold and $f = \frac{\lambda}{2}
|x|^2$ on the Euclidean factor.

We are here particularly interested in  GRS with harmonic Weyl curvature.
 Fern\'{a}ndez-L\'{o}pez and Garc\'{i}a-R\'{i}o  \cite{FG} showed that a compact GRS  with harmonic Weyl curvature is rigid.  Munteanu and Sesum \cite{MS} proved that a complete shrinking GRS with
harmonic Weyl tensor is rigid.

The author has classified four dimensional GRS with harmonic Weyl curvature in \cite{Ki}, where
such GRS  are  characterized locally via
 detailed eigenvalue analysis  of the Codazzi tensor $r  - \frac{R}{6} g$, where $R$ is the scalar curvature of $g$. While this method  provided explicit local classifications in three and four dimension \cite{Sh2}, it
   appeared to be ineffective  for  higher dimension,  owing to computational difficulty.

In this article we have overcome the main technical problem in  \cite{Ki}  by developing a new technique of  {\it  refined adapted}  Ricci-eigen frame fields and succeeded in
 classifying GRS with harmonic Weyl curvature without any dimensional restriction.
The first result of this paper is to obtain a complete description of {\it local} soliton metrics and potential functions;

\begin{theorem} \label{locals}
Let $(M^n,  g, f)$, $n \geq 4$, be a (not necessarily complete) $n$-dimensional gradient Ricci soliton with harmonic Weyl curvature.
Then for each point in some open dense subset $\mathfrak{M}$ of $M$, there exists a neighborhood $V$ such that  $(V, g)$ is isometric to a region in  one of the following:

\medskip
{\rm (i)}  an Einstein manifold with $f$ a constant function.

\smallskip
{\rm (ii)}  the Riemannian product of a $k$-dimensional manifold  $(N_1^{k}, g_1) $ with Ricci curvature $r_{g_1} = 0$, $2 \leq k \leq n-2$, and an Einstein manifold $(N_2^{n-k}, g_2)$
with $r_{g_2} =  \lambda g_2 $, $\lambda \neq 0$. The restriction $f|_{V}$ is the pull-back of a function $\hat{f}$ on $N^k_1$ and
$ (N_1^{k}, g_1, \hat{f} )$  is itself a gradient Ricci soliton.
For  a local function $s$  with $d s = \frac{df}{|d f|}$,  $f|_V= \lambda \frac{s^2}{2} $ modulo a constant.

\smallskip
{\rm (iii)} $\mathbb{R} \times W^{n-1}$ with the warped product metric $ ds^2 +    h(s)^2 \tilde{g},$
where $\tilde{g}$ is an Einstein metric on a manifold $W^{n-1}$, and $f=f(s)$ is a non-constant function of $s$ only.

\smallskip
{\rm (iv)}  $  \mathbb{R}^2 \times N^{n-2}$, $n \neq  5$, with
$
g= dx_1^2   + x_1^{\frac{2({n-3})}{{n-1}}} dx_2^2+  x_1^{\frac{4}{{n-1}}} \tilde{g},
$
 where $(x_1, x_2)$ is the standard coordinates on $\mathbb{R}^2$ and ${\tilde{g}}$ is a Ricci flat metric on a manifold $N^{n-2}$.
 Moreover, $\lambda=0$ and $f|_{V}=\frac{2(n-3)}{{n-1}} \ln |x_1| $ modulo a constant.
\end{theorem}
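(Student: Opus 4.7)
The plan is to work on an open dense subset $\mathfrak{M} \subset M$ that is the union of the interior of $\{\nabla f = 0\}$ (where $g$ is automatically Einstein, yielding case (i)) and the locus where $\nabla f \neq 0$ and the Ricci tensor $r$ has locally constant eigenvalue multiplicities. Harmonic Weyl curvature is equivalent, in dimension $\geq 4$, to the Schouten-type tensor $r - \frac{R}{2(n-1)} g$ being a Codazzi tensor. Commuting covariant derivatives in the soliton equation $\nabla df = \lambda g - r$ produces the standard identity
\[
(\nabla_X r)(Y, Z) - (\nabla_Y r)(X, Z) = R(X, Y, Z, \nabla f),
\]
and combining this with the Codazzi property of $r - \frac{R}{2(n-1)} g$ yields a Cotton-type curvature constraint which, together with the contracted Bianchi identity $\tfrac{1}{2}\nabla R = r(\nabla f)$, forces $\nabla f$ to be an eigenvector of $r$ and the remaining Ricci eigenvalues to be locally constant on each level set $\Sigma_c = \{f = c\}$.

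The central new idea is the construction of a \emph{refined adapted} Ricci-eigen frame $\{e_1, \ldots, e_n\}$ on a neighborhood $V$: take $e_1 = \nabla f / |\nabla f|$, let $e_2, \ldots, e_n$ diagonalize $r|_{(\nabla f)^\perp}$, and within each Ricci eigenspace of dimension $\geq 2$ exploit the residual orthogonal gauge freedom to enforce extra vanishing among the intra-eigenspace connection coefficients. The hard part, and the chief obstacle relative to \cite{Ki}, is that for $n \geq 5$ the Codazzi equation alone no longer pins down the intra-eigenspace connection components; producing the refinement requires integrating an auxiliary PDE system for the frame rotation along the flow of $e_1$ and along the eigenspace distributions, and then invoking the second Bianchi identity to confirm that the remaining connection components forced by this choice actually vanish. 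The payoff is a small, completely determined set of structure equations that sidesteps the combinatorial explosion in the number of Christoffel symbols that defeated the four-dimensional method.

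With the refined frame in place, the connection equations show that each Ricci eigenspace is involutive and its integral submanifolds appear as totally umbilic factors of a (possibly singular) warped product whose base is an interval in the $e_1$-direction. The Hessian equation restricted to $e_1$ becomes an ODE coupling $f'(s)$, the warping function(s), and $\lambda$, where $s$ is arclength along $\nabla f$. A case analysis on the number of distinct Ricci eigenvalues, on whether the $e_1$-eigenvalue coincides with one of the eigenvalues on $(\nabla f)^\perp$, and on the sign of $\lambda$, then produces the remaining three alternatives: (ii) a Riemannian product in which $f = \lambda s^2 / 2$ is carried by a Ricci-flat factor $N_1^k$ while the complementary factor is Einstein with constant $\lambda$; (iii) a single warped product $ds^2 + h(s)^2 \tilde g$ with Einstein fiber; and (iv), necessarily in the steady case $\lambda = 0$ with two non-proportional warpings, the explicit singular double warped structure, whose exponents $\frac{2(n-3)}{n-1}$ and $\frac{4}{n-1}$ are forced by solving the ODE system obtained from requiring the Ricci equations in the refined frame to close.
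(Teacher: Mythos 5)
Your outline follows the same broad strategy as the paper: pass to the open dense set where $\nabla f\neq 0$ and the Ricci eigenvalue multiplicities are locally constant, use the Codazzi property of $r-\frac{R}{2(n-1)}g$ to make $\nabla f/|\nabla f|$ a Ricci eigenvector with eigenvalues constant on level sets, build an adapted Ricci-eigen frame, refine it by solving an ODE along the flow of $E_1$, and then read off a (multiply) warped product structure and run a case analysis. Two remarks on the refinement step: the paper's construction is a Lie transport of the eigenframe along the $E_1$-flow combined with the rescaling $e^{-\int\zeta_i}$, whose effect is precisely $\nabla_{F_1}F_i=0$; the verification uses uniqueness for a linear ODE system and the Jacobi identity for brackets, not the second Bianchi identity, and no transport along the eigenspace distributions is needed. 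This is a difference of description rather than substance.

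The genuine gap is that your case analysis never explains why at most two distinct Ricci eigenvalues can occur among the directions orthogonal to $\nabla f$. If three or more occur, the refined frame produces a multiply warped product with three or more warping functions, and nothing in your sketch rules this out; yet the classification into only the four listed types hinges entirely on excluding it. In the paper this exclusion is the technical core (Sections 6--7, Proposition~2): one writes $\zeta_i=\frac{u_2'}{u_2}+\frac{h'}{c_i+h}$, expresses all relevant Christoffel symbols and the equations $R_{ii}=\lambda-\zeta_i f'$ as rational functions of the auxiliary variable $h$ with poles at the distinct $-c_i$, and repeatedly invokes the linear independence of the functions $h^s$ and $(h+c_i)^{-t}$ (Lemma~9) to force $h'=ch^2+ah+b$, then $f''=0$, and finally $c=0$, contradicting $\zeta_2\neq 0$. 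This argument requires at least three distinct pole locations to run, which is exactly why it kills the $\geq 3$ case and nothing less. Without supplying this step (or a substitute), the proposal does not establish the theorem. A secondary omission: in the two-eigenvalue case the derivation of the exponents in (iv) and of the constraint $n\neq 5$ comes from the discriminant condition $(n-k-1)(k-2)=0$ for the quadratic $Q(c_3)=0$; your sketch asserts the exponents are "forced by solving the ODE system" but gives no indication of where the dichotomy between the product case (ii) and the singular case (iv), or the exclusion of $n=5$, would come from.
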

With
the above local classification in Theorem \ref{locals}, we can go on to characterize {\it complete}  GRS with harmonic Weyl curvature.  Among the four types of regions in Theorem 1, a particular treatment is needed for type-(iii) regions.
 In the four dimensional work \cite{Ki}, a type-(iii) region is locally conformally flat, so at least for shrinking and steady case, we could  apply the afore-mentioned classification results  of complete locally conformally flat GRS.  But these results require  nonnegativity of some  curvature which is provided for $W=0$ case  by some pinching estimate of flow theory \cite{Z}, but not yet for $\delta W=0$.

 So, to characterize  complete  GRS $(M, g, f)$ with harmonic Weyl curvature, we instead try to fit together  regions  supplied by  Theorem \ref{locals}.  We carry out some elaborate process in exploiting the real analytic nature of $g$ and $f$, and obtain a fairly good description of the topology and geometry of $(M, g, f)$,  so that finally we manage to reduce it to locally conformally flat case. We obtain the following  classification of  steady GRS.
\begin{theorem} \label{steady}
An $n$-dimensional complete steady gradient Ricci soliton  with harmonic Weyl curvature
is either Ricci flat, or isometric
to the Bryant soliton.
\end{theorem}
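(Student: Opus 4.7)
My strategy is to combine Theorem \ref{locals} on the open dense set $\mathfrak{M}\subset M$ with real analyticity of $(g,f)$ and completeness of $(M,g)$ to reduce to the already-classified locally conformally flat case.

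Since $\lambda=0$, type (ii) of Theorem \ref{locals} is excluded outright. If some point of $\mathfrak{M}$ has a type-(i) neighborhood, then $r=\lambda g=0$ on a nonempty open set, and as $r$ is real analytic it vanishes on all of $M$, giving the Ricci-flat alternative. Next, type (iv) is incompatible with completeness: its potential is $f=\frac{2(n-3)}{n-1}\ln|x_1|+\mathrm{const}$, hence $|\nabla f|^{2}=\frac{4(n-3)^{2}}{(n-1)^{2}x_1^{2}}$, and the steady conservation law $R+|\nabla f|^{2}\equiv \mathrm{const}$ would force $R\to -\infty$ as $x_1\to 0^{+}$, contradicting the bound $R\ge 0$ for complete steady gradient Ricci solitons (B.-L.~Chen).

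Thus it remains to treat the case in which every point of $\mathfrak{M}$ has a type-(iii) warped-product neighborhood $ds^{2}+h(s)^{2}\tilde g$ with $\tilde g$ Einstein. The next step is to upgrade these local descriptions into a global warped-product decomposition of $M$ minus the critical set of $f$. On this open set the unit vector $\nabla f/|\nabla f|$ integrates to a geodesic flow whose arc length $s$ is a well-defined smooth function, the level sets of $f$ and of $s$ coincide, and analytic continuation along the flow lines patches the local warping factors and Einstein fibers into one global pair $(h(s),\tilde g)$. Completeness then constrains the $s$-interval: either $h$ vanishes at some endpoint, in which case smooth extension of the warped metric across the resulting pole requires $h'=\pm 1$ there and the fiber $(W,\tilde g)$ to be the round $(n{-}1)$-sphere of sectional curvature $1$; or $h$ stays bounded away from $0$ on an unbounded $s$-interval, which I would exclude by a direct analysis of the soliton ODEs in $s$ combined with the conservation law $R+(f')^{2}\equiv \mathrm{const}$, showing any such solution is either trivial or violates $R\ge 0$.

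In the surviving alternative $(M,g)$ is rotationally symmetric with round spherical fiber and is therefore locally conformally flat, so the classification of complete locally conformally flat steady gradient Ricci solitons by Cao--Chen and Catino--Mantegazza forces $(M,g)$ to be isometric to the Bryant soliton. The main difficulty I expect is in the middle step: producing the global warped-product decomposition from purely local data, resolving what happens at the critical set of $f$ and at boundaries between open regions of different local type, and, in dimensions $\ge 5$ where Einstein hypersurfaces need not have constant sectional curvature, using completeness to force the fiber to be a round sphere. This is precisely the elaborate real-analytic matching alluded to in the introduction.
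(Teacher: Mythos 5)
Your overall strategy (local types from Theorem \ref{locals} $\to$ real-analytic/complete matching $\to$ reduction to the locally conformally flat classification) is the same as the paper's, and your treatments of types (i), (ii) are identical. Your type-(iv) exclusion is a legitimate variant: the paper instead observes that $f(\theta_p(t))=\tfrac{2(n-3)}{n-1}\ln|t+c|$ cannot be the restriction of a real analytic function along a complete geodesic, which avoids invoking Chen's bound $R\ge 0$; your version works too, though as stated it needs either the continuation to $x_1\to 0^+$ or the simpler observation that the conservation law already forces $R<0$ identically on the type-(iv) region.

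The genuine gap is in the terminal case of your type-(iii) analysis: a maximal region on which $h>0$ for all $s\in\mathbb{R}$, so that $(M,g)$ is (a quotient of) $\mathbb{R}\times W^{n-1}$ with $ds^2+h(s)^2\tilde g$ and $\tilde g$ merely \emph{Einstein}. You propose to ``exclude this by a direct analysis of the soliton ODEs combined with $R+(f')^2\equiv\mathrm{const}$, showing any such solution is trivial or violates $R\ge 0$,'' but no such argument is supplied, and it is not clear that positivity of $R$ is the right mechanism: the ODE system for $(h,f)$ depends on $\tilde g$ only through its Einstein constant, and for positive Einstein constant it admits exactly the Bryant profile, which has $R>0$ and $h$ bounded --- so the obstruction to realizing it on $s\in\mathbb{R}$ is not a curvature sign but the fact that the Bryant warping function closes up at a pole. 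The paper's resolution (proof of Theorem \ref{steady}, case (C) of Lemma \ref{ca5}) is a specific device you are missing: replace the Einstein fiber $\tilde g$ by a space form $\tilde g_k$ with the same Einstein constant, check that $ds^2+h(s)^2\tilde g_k$ with the same potential is again a \emph{complete, locally conformally flat} steady soliton, apply Cao--Chen/Catino--Mantegazza to this auxiliary soliton, and then rule out the Bryant alternative because its scalar-curvature level sets close up to a point while here $h>0$ on all of $\mathbb{R}$; the flat alternative forces $h$ constant and returns the Ricci-flat cylinder. Without this fiber-replacement step (or a genuinely carried-out ODE classification replacing it), the reduction to the locally conformally flat case fails precisely in dimension $\ge 5$, where the fiber need not be a space form. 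Separately, the global matching you defer (critical levels of $f$ that are hypersurfaces rather than points, and the possible $\mathbb{Z}_2$-identifications across them) occupies Lemmas \ref{lcf1v}--\ref{ca5} of the paper, including a Weierstrass-preparation argument; flagging it as ``the main difficulty'' is honest but leaves that part of the proof unwritten.
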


Following the proof of  Theorem \ref{steady}, one can reprove Munteanu-Sesum's classification of complete shrinking GRS with harmonic Weyl curvature.
For expanding solitons there are  a number of recent works, e.g. \cite{ CD, Cho, PW, SS} and references therein. We prove;

\begin{theorem} \label{expand}
For an  $n$-dimensional complete expanding gradient Ricci soliton $(M^n,  g, f)$ with harmonic Weyl curvature,   one of the following {\rm (i)}-{\rm (iv)} holds.

\smallskip
{\rm (i)} $(M, g)$ is an Einstein manifold with $f$ a constant function.

\smallskip
{\rm (ii)}
 $(M, g)$ is isometric to a finite quotient of the Riemannian product $\mathbb{R}^k \times  (N^{n-k}, g_2) $  where  $2 \leq k \leq n-2$ and  $ g_2$ is Einstein  with $r_{g_2} =  \lambda g_2 $, $\lambda \neq 0$. Moreover,
 $f = \frac{\lambda}{2} |x|^2$ on the Euclidean factor.

\smallskip
{\rm (iii)}  $(M, g)$ is isometric to $\mathbb{R}^n$ with the warped product metric $ dr^2 +    h(r)^2 \tilde{g}$,  where $r$ is the radius function of the polar ccordinates on $\mathbb{R}^n$, $h(0) =0$  and $\tilde{g}$ on $\mathbb{S}^{n-1}$ has positive constant curvature.

\smallskip
{\rm (iv)}    $(M, g)$ is isometric to a quotient of  $\mathbb{R} \times W$  with the warped product metric $ ds^2 +    h(s)^2 \tilde{g}$,  where $h>0$ on $\mathbb{R}$ and  $\tilde{g}$ is an Einstein metric on an $(n-1)$-dimensional manifold $W$.

\end{theorem}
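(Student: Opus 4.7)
The plan is to parallel the argument for Theorem \ref{steady}, using the local classification of Theorem \ref{locals} on the open dense subset $\mathfrak{M}$ and then globalizing via real analyticity of $g$ and $f$ together with completeness. A useful first observation is that local type (iv) is automatically excluded, since that model forces $\lambda=0$, incompatible with $\lambda<0$ for an expander. Hence at each point of $\mathfrak{M}$ the local model is one of (i), (ii), (iii); and since the Ricci eigenvalue configuration (the dimension of the Ricci kernel, whether $df$ is parallel, etc.) is a real analytic invariant of $(g,f)$, it is locally constant on $\mathfrak{M}$, and by connectedness of $M$ a single global model is selected.

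I then treat each surviving model in turn. Type (i) yields conclusion (i) directly. At a point of local type (ii) the metric splits as $N_1^k \times N_2^{n-k}$ with $r_{g_1}=0$, $r_{g_2}=\lambda g_2$, and $\nabla d\hat f = \lambda g_1$ on $N_1$. A classical rigidity statement asserts that a complete Riemannian manifold admitting a function whose Hessian equals a nonzero constant times the metric must be isometric to a Euclidean space, with $\hat f=\frac{\lambda}{2}|x-x_0|^2$; combined with the global de Rham decomposition recovered from the local product structure by analyticity, this yields conclusion (ii) up to finite quotient. At a point of local type (iii) the metric has the form $ds^2+h(s)^2\tilde g$ with $\tilde g$ Einstein and $s$ intrinsically defined through $ds=df/|df|$ on $\{df\neq 0\}$. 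Hence $s$ extends analytically and glues across patches to a maximal interval $I\subset\mathbb R$. Completeness forces $I$ to be all of $\mathbb R$, giving the warped product of conclusion (iv) (after any finite quotient on the fiber forced by the global topology, noting that no nontrivial translation on $\mathbb R$ can preserve the nonconstant $f$), or a half line, in which case smoothness at a warping pole ($h(0)=0$, $h'(0)=\pm 1$) forces $\tilde g$ to have constant sectional curvature $1$ on $\mathbb S^{n-1}$, yielding the rotationally symmetric expander on $\mathbb R^n$ of conclusion (iii). Both endpoints closing up would give $M$ compact, but any compact expanding GRS is Einstein, already covered by (i).

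The main obstacle is the global fitting of local pieces across the critical locus $\{df=0\}$ and across components of $\mathfrak{M}$, which is exactly the step that required elaborate work in the proof of Theorem \ref{steady}. In the expanding case one lacks the pinching estimates available for shrinkers and the conformal flatness reductions used for steady solitons, so the argument must rest on real analyticity alone. I would follow the bookkeeping of Theorem \ref{steady}'s proof: use analyticity to propagate the product or warped product structure from one component of $\mathfrak{M}$ to another, verify that the level sets $\{s=\text{const}\}$ foliate a connected open dense subset with pairwise isometric Einstein fibers, and then extend across the complement of $\mathfrak{M}$ using continuity of $\nabla f$ together with a direct local analysis at any warping poles. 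Once these global extensions are established, the enumeration above places $(M,g,f)$ in exactly one of the four cases (i)--(iv).
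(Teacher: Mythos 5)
Your proposal follows essentially the same route as the paper: type (iv) is ruled out because it forces $\lambda=0$, type (i) is immediate, and the type-(iii) case is resolved by exactly the global gluing machinery (the warped-product structure lemmas of Section 9, culminating in Lemma \ref{ca5}) developed for the steady case, with the compact alternative (A) eliminated because compact expanders are Einstein. The only cosmetic divergence is in the type-(ii) case, where the paper observes that the scalar curvature is constant and cites the rigidity results of \cite{FG2, PW}, whereas you re-derive the splitting via Hessian rigidity and the de Rham decomposition; both routes are sound.
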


\medskip

To prove theorem \ref{locals}, we start with
a useful local form of $g$ in the formula (\ref{solba76})  provided by \cite{CC2}.
 We consider an {\it adapted} frame field, which is a local orthonormal  Ricci-eigen frame field $E_i$, $i =1, \cdots, n $ such that $E_1 = \frac{ \nabla f }{  | \nabla f | }$. The corresponding Ricci eigenvalues $\lambda_i$ are shown to be constant on each connected component of $f^{-1}(c)$ for a regular value $c$ of $f$.
From harmonic Weyl curvature condition,  the  Codazzi tensor $r- \frac{R}{2(n-1)}g$ provides
other important  properties on  Ricci eigenvalues and  Ricci eigenspaces
 \cite{De}. With these we get many relations on $\lambda_i$
and the Christoffel symbols $\Gamma_{ij}^k:= \langle  \nabla_{E_i}E_j,   E_k \rangle$ of $g$. In the previous four dimensional work \cite{Ki},
 the number of  $\Gamma_{ij}^k$ is not large so that
we could make a case-by-case study to characterize $g$.
 But in  higher dimension, the number of  $\Gamma_{ij}^k$ increases much, so there remained a nontrivial problem in extending the argument to higher dimension.

\smallskip
The main novelty in this article is to refine an adapted frame field $\{ E_i \}$. In fact,  by translating $E_i$, $i \geq 2$, along the $E_1$ flow, we obtain a refined adapted frame field $\{ F_i\}$ in which one can improve the control on  $\Gamma_{ij}^k$ significantly. By our detailed analysis on the equation (\ref{grs}),
this improved information on  $\Gamma_{ij}^k$ turns out to be crucial
in excluding the existence of  more than two distinct $\lambda_i$ among $i \geq 2$ at each point.
If there are exactly one or two distinct $\lambda_i$ among $i \geq 2$, we do an independent argument to characterize them, and obtain Theorem \ref{locals}.

Then a complete GRS $(M, g, f)$ with harmonic Weyl curvature may be considered as  the union of regions of type (i)-(iv) of Theorem \ref{locals}, matched together along their boundaries. The key point is to understand  how  type-(iii) regions meet one another.
We first see that each type-(iii) region $U$ bordered by the set $\{p \ | \ \nabla f (p) =0 \}$  is  a warped product manifold or its quotient.
Next we show that either   the closure $\overline{U}$ in $M$ is a smooth manifold (with boundary), or some  boundary component(s) of $U$ closes up to one point.
Now by the real analyticity  of $g$ and $f$,  two neighboring such regions can meet nicely along the boundary component. Then $M$ is simple as a smooth manifold, equipped with a global warped product metric in case $M$ has a type-(iii) region.
This gives Theorem \ref{expand}. We argue further to use the classification result of  locally conformally flat GRS to exclude most non-locally-conformally-flat manifolds and obtain Theorem \ref{steady}.

\medskip
This paper is organized as follows. In section 2 we explain properties common to any GRS with harmonic Weyl curvature.
In section 3 some computations are done for a GRS with harmonic Weyl curvature in an adapted Ricci-eigen frame field $\{E_i \} $.
In section 4 we improve  $\{E_i \} $ and get a refined adapted frame field $\{F_i\}$.
In section  5 we compute $\Gamma_{ij}^k$
in $F_i$.
In section  6  we analyze the Ricci tensor  and  GRS equation when there are at least two distinct Ricci-eigen values $\lambda_i$  among $i \geq 2$.
In section  7 we show that a GRS with harmonic Weyl curvature cannot admit  at least three distinct $\lambda_i$'s   among $i \geq 2$.
In section  8  we make a local characterization of GRS with harmonic Weyl curvature and exactly  two distinct $\lambda_i$'s    among $i \geq 2$, and finish the proof of Theorem \ref{locals}. In section 9 we classify complete steady and expanding GRS with harmonic Weyl curvature, proving Theorem 2 and Theorem 3.

\section{Preliminaries}

In this section we recall some known results about a GRS with harmonic Weyl curvature.
An $n$-dimensional Riemannian manifold $(M^n, g)$, $n\geq 4$,  is said to have harmonic Weyl curvature if the divergence $\delta W $  of the Weyl tensor $W$ is zero, which is equivalent to the Schouten tensor $S:=\frac{1}{n-2} (r  - \frac{R}{2(n-1)}g)$ being a Codazzi tensor, written in local coordinates as $\nabla_k S_{ij}  = \nabla_i S_{kj}$, \cite[Chap.16]{Be}.
A gradient Ricci soliton is real analytic in harmonic coordinates \cite{Iv}.
Below we shall denote the Ricci tensor as $r$ or $R( \cdot , \cdot )$.

 \begin{lemma} [Theorem 2.1 in \cite{FG}] \label{threesolx}
Let  $(M^n, g, f)$ be a GRS with harmonic Weyl curvature. The Riemannian curvature tensor satisfies
\begin{eqnarray*} \label{solba}
R(X, Y, Z,\nabla  f ) &=& \frac{1}{n - 1} R(X,\nabla  f )g(Y, Z) - \frac{1}{n - 1} R(Y,\nabla f )g(X, Z)\\
&=& \frac{1}{2(n - 1)} dR(X) g(Y, Z) - \frac{1}{2(n - 1)} dR(Y)g(X, Z).
\end{eqnarray*}
\end{lemma}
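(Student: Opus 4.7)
The plan is to combine three standard identities: the Ricci commutation formula applied to the Hessian of $f$, the Codazzi property of the Schouten tensor coming from harmonic Weyl, and Hamilton's identity $dR = 2\, r(\nabla f, \cdot)$ for gradient solitons.

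First, I would differentiate the GRS equation $\nabla_j \nabla_k f = -R_{jk} + \lambda g_{jk}$ once more and antisymmetrize in $(i,j)$. The Ricci identity for the one-form $df$ gives $[\nabla_i, \nabla_j](df)_k = -R_{ijk}{}^{l}\nabla_l f$, so plugging in the soliton equation on both sides produces the well-known identity
\[
\nabla_i R_{jk} - \nabla_j R_{ik} = R_{ijkl}\, \nabla^l f,
\]
valid on any GRS. Next I would invoke the harmonic Weyl hypothesis, which says that the Schouten tensor $S = \tfrac{1}{n-2}\bigl(r - \tfrac{R}{2(n-1)}g\bigr)$ is a Codazzi tensor, i.e.\ $\nabla_i S_{jk} = \nabla_j S_{ik}$. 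Expanding this in terms of $r$ and $R$ yields
\[
\nabla_i R_{jk} - \nabla_j R_{ik} = \frac{1}{2(n-1)}\bigl(g_{jk}\, \nabla_i R - g_{ik}\, \nabla_j R\bigr),
\]
and equating with the previous display gives the second equality asserted in the lemma.

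Finally, I would apply Hamilton's identity $\nabla_i R = 2 R_{il}\nabla^l f$, which follows for any GRS by tracing $\nabla df = -r + \lambda g$, differentiating, and using the twice-contracted second Bianchi identity $\nabla^i R_{ij} = \tfrac{1}{2}\nabla_j R$. Substituting $dR(X) = 2R(X, \nabla f)$ and $dR(Y) = 2R(Y, \nabla f)$ into the second equality yields the first and completes the proof. The argument is essentially bookkeeping; the only point that requires care is making sure that the pair of antisymmetrized Ricci derivatives supplied by the Codazzi condition on $S$ matches exactly the pair $\nabla_i R_{jk} - \nabla_j R_{ik}$ produced by the Ricci identity on $\nabla df$, so that the two displays can be identified without any further index reshuffling. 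No geometric difficulty arises in this lemma itself; its significance is as the starting point for the eigenvalue analysis of $r$ along $\nabla f$ carried out in the sections that follow.
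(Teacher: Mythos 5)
Your argument is correct and complete: the commutation identity $\nabla_i R_{jk}-\nabla_j R_{ik}=R_{ijkl}\nabla^l f$ from differentiating the soliton equation, the Codazzi property of the Schouten tensor from $\delta W=0$, and Hamilton's identity $dR=2\,r(\nabla f,\cdot)$ fit together exactly as you describe. The paper itself gives no proof of this lemma --- it is quoted as Theorem 2.1 of \cite{FG} --- and your derivation is precisely the standard one used there, so there is nothing to compare beyond noting that you have supplied the proof the paper omits.
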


The next lemma comes from \cite{CC2}. See also Lemma 2.3 in \cite{Ki}.

 \begin{lemma} \label{threesolbx}
Let  $(M^n, g, f)$ be a GRS with harmonic Weyl curvature. For a point $p$ in $f^{-1}(c)$ with $\nabla f(p) \neq 0$,
let $\Sigma_c^0$ be the connected component of $f^{-1}(c)$. Then the following assertions hold:

{\rm (i)} Where $\nabla f \neq 0$,  $E_1 := \frac{\nabla f }{|\nabla f  | }$ is an eigenvector field of $r$.

{\rm (ii)} $ |\nabla f|$  is constant on $\Sigma_c^0$.

\smallskip
{\rm (iii)} There is a function $s$ locally defined in $\{ p \in M \ | \ \nabla f(p) \neq 0\}$ by   $s(x) = \int  \frac{   d f}{|d f|} $, so that  $ \ ds =\frac{   d f}{|d f|}$ and $E_1 = \nabla s$.

\smallskip
{\rm (iv)}  $R({E_1, E_1})$ is constant on $\Sigma_c^0$.

\smallskip
{\rm (v)}  Near a point in $\Sigma_c^0$, the metric $g$ can be written as
\begin{eqnarray} \label{solba76}
 g= ds^2 +  \sum_{i,j > 1} g_{ij}(s, x_2, \cdots  x_n) dx_i \otimes dx_j,
\end{eqnarray}
 where    $x_2, \cdots  x_n$ is a local
 coordinates system on $\Sigma_c^0$.

\smallskip
{\rm (vi)}  $\nabla_{E_1} E_1=0$.
\end{lemma}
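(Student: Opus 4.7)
The plan uses Lemma 1 as the primary ingredient, supplemented by the well-known GRS identity $r(\nabla f) = \tfrac{1}{2}\nabla R$, which is in fact built into Lemma 1 (obtained by comparing the two forms of the right-hand side there).

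I would prove (i) first. In Lemma 1's identity $R(X, Y, Z, \nabla f) = \tfrac{1}{n-1}\bigl(R(X, \nabla f)\,g(Y, Z) - R(Y, \nabla f)\,g(X, Z)\bigr)$, I set $Y = Z = \nabla f$. The left-hand side vanishes by antisymmetry of the Riemann tensor in its last two slots, giving $R(X, \nabla f)\,|\nabla f|^2 = R(\nabla f, \nabla f)\,g(X, \nabla f)$. For $X \perp \nabla f$ this forces $R(X, \nabla f) = 0$, so the Ricci image of $\nabla f$ is a multiple of $\nabla f$, and $E_1$ is a Ricci eigenvector. The main obstacle is spotting this specialization; once it is in hand the remaining parts go through routinely.

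For (ii), if $V$ is tangent to $\Sigma_c^0$ then $V(|\nabla f|^2) = 2\,\mathrm{Hess}(f)(V, \nabla f) = -2\,R(V, \nabla f) + 2\lambda\,g(V, \nabla f)$ by the GRS equation, and both terms vanish by orthogonality and (i). For (iii), (ii) makes $|\nabla f|$ constant on each level set and hence locally a function of $f$; thus $df/|df|$ is exact, with primitive $s$ satisfying $\nabla s = E_1$ and $|\nabla s| = 1$. For (vi), the symmetry of $\mathrm{Hess}(s)$ together with $|\nabla s| = 1$ yields $\langle \nabla_{E_1} E_1, V\rangle = \mathrm{Hess}(s)(V, E_1) = \langle \nabla_V E_1, E_1\rangle = \tfrac{1}{2}V(|\nabla s|^2) = 0$ for every $V$.

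Finally, (v) follows by constructing Fermi-type coordinates: I pick local coordinates $(x_2, \ldots, x_n)$ on $\Sigma_c^0$ and extend them via the flow of $E_1$, so that $\partial_s = E_1$; then $\partial_s\bigl(g(\partial_s, \partial_{x_i})\bigr) = 0$ follows from (vi) combined with $\nabla_{\partial_s}\partial_{x_i} = \nabla_{\partial_{x_i}}\partial_s$ and the initial orthogonality at $s = c$, yielding the stated form of $g$. For (iv), the identity $R(\nabla f, \cdot) = \tfrac{1}{2}dR$ (from comparing the two lines of Lemma 1) combined with (i) makes $\nabla R$ parallel to $\nabla f$, so $R$ is constant on $\Sigma_c^0$ and locally $R = R(f)$; hence $R(E_1, E_1) = R(\nabla f, \nabla f)/|\nabla f|^2 = \tfrac{1}{2}R'(f)$ depends only on $c$.
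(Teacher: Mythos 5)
Your proposal is correct in substance, and it is a genuinely different presentation from the paper's: the paper disposes of this lemma by citing Lemma~2.3 of \cite{Ki} (which treats the case where $c$ is a regular value) and only adds a short connectedness argument, whereas you give a self-contained direct proof. Your individual steps are the standard ones and all check out: specializing Lemma~\ref{threesolx} at $Y=Z=\nabla f$ does give $R(X,\nabla f)\,|\nabla f|^2 = R(\nabla f,\nabla f)\,g(X,\nabla f)$ and hence (i); the computation $V(|\nabla f|^2) = -2R(V,\nabla f)+2\lambda g(V,\nabla f)=0$ gives (ii) at regular points; (iii), (v) and (vi) are exactly the exactness-of-$df/|df|$, Fermi-coordinate and $\mathrm{Hess}(s)$-symmetry arguments one expects; and for (iv) the identity $r(\nabla f,\cdot)=\tfrac12 dR$ from the second line of Lemma~\ref{threesolx} together with (i) makes $\nabla R$ parallel to $\nabla f$, so $R$ and then $r(E_1,E_1)$ are locally functions of $s$ alone. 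What your route buys is transparency; what the paper's route buys is brevity, since all of this is already in \cite{Ki}.

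The one point you omit is precisely the point the paper's own proof is devoted to. The statement does \emph{not} assume $c$ is a regular value: $\Sigma_c^0$ is the full connected component of $f^{-1}(c)$ through a point $p$ with $\nabla f(p)\neq 0$, and a priori it could contain critical points, where your ``$V$ tangent to $\Sigma_c^0$'' computation is not available because the level set need not be a smooth hypersurface there. Your arguments for (ii) and (iv) therefore only establish \emph{local} constancy near regular points. To get constancy on all of $\Sigma_c^0$ (which in particular shows $\Sigma_c^0$ contains no critical points at all), one needs the open-and-closed argument the paper supplies: the subset of $\Sigma_c^0$ where $|\nabla f|=|\nabla f(p)|\neq 0$ is nonempty, closed by continuity, and open in $\Sigma_c^0$ because near any of its points the level set is a smooth hypersurface on which your local computation applies; hence it is all of $\Sigma_c^0$, and the same argument upgrades (iv). This is a small but genuine gap relative to the statement as written, and it is easily patched by adding exactly that connectedness step.
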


\begin{proof}
Lemma 2.3 of \cite{Ki} proves the case when $c$ is a regular value of $f$. For our case,
let $\Sigma$ be the set of points which can be connected by a continuous path to $p$ in $\Sigma_c^0$.  $ |\nabla f  | $ is constant on $\Sigma$. It is easy to see that $\Sigma$ is open and closed in $\Sigma_c^0$ so that  $\Sigma=\Sigma_c^0$. This argument helps in proving (ii) and (iv).
\end{proof}

 For a point $x$ in $M$, let  $E_r(x)$ be the number of distinct eigenvalues of the Ricci tensor $r_x$,
and set $M_r = \{    x \in M \  |  \  E_r {\rm \ is \ constant \ in \ a \ neighborhood of \ } x \}$, following Derdzi\'{n}ski  \cite{De}, so that $M_r$ is an open dense subset of $M$.
 Then we have;

\begin{lemma}[Lemma 2.4 and Lemma 2.8 in \cite{Ki}] \label{abc60x} For a Riemannian metric $g$ of dimension $n \geq 4$ with harmonic Weyl curvature, consider orthonormal vector fields $E_i$, $i=1, \cdots n$ such that
$R(E_i, \cdot ) = \lambda_i g(E_i, \cdot)$. Then the followings hold
in each connected component of $M_r$;

\bigskip
\noindent {\rm (i)}
 $ \ (\lambda_j - \lambda_k ) \langle \nabla_{E_i} E_j, E_k \rangle   + {E_i} \{(r- \frac{R}{2n-2}g)(E_j, E_k)   \} \hspace{1cm} \\
   \hspace{1cm}  =(\lambda_i - \lambda_k ) \langle \nabla_{E_j} E_i, E_k\rangle + {E_j} \{(r- \frac{R}{2n-2}g)(E_k, E_i)   \},  $
 for any $i,j,k =1, \cdots n$.

\medskip
\noindent {\rm (ii)}  If $k \neq i$ and $k \neq j$,
$ \ \ (\lambda_j - \lambda_k ) \langle \nabla_{E_i} E_j, E_k\rangle=(\lambda_i - \lambda_k ) \langle \nabla_{E_j} E_i, E_k\rangle .$

\smallskip
\noindent {\rm (iii)} Given distinct eigenfunctions $\lambda$ and $ \mu$ of the Ricci tensor $r$ and local vector fields $v$ and $ u$ such that  $r v = \lambda v$, $ru = \mu u$ with $|u|=1$, it holds

$ \ \ \ \ \  v(\mu- \frac{R}{2(n-1)} ) = (\mu - \lambda) <\nabla_u u, v > $.

\smallskip
\noindent {\rm (iv)} For each eigenfunction $\lambda$ of $r$, the $\lambda$-eigenspace distribution is integrable and its leaves are totally umbilical submanifolds of $M$.

\end{lemma}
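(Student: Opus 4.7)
The plan is to derive the entire lemma from the Codazzi identity $(\nabla_X T)(Y,Z) = (\nabla_Y T)(X,Z)$ for $T := r - \frac{R}{2(n-1)}g$, which is equivalent to $\delta W = 0$ up to a constant factor. On each connected component of $M_r$ the number of distinct Ricci eigenvalues is locally constant, so smooth local Ricci-eigen orthonormal frames $\{E_i\}$ exist, and the crucial algebraic fact is that $T$ is \emph{diagonal} in any such frame: $T(E_j,E_k) = (\lambda_j - \frac{R}{2(n-1)})\delta_{jk}$. This makes every covariant-derivative expansion collapse onto a single Christoffel-type scalar.

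For (i), I would expand
\[
(\nabla_{E_i}T)(E_j,E_k) = E_i[T(E_j,E_k)] - T(\nabla_{E_i}E_j, E_k) - T(E_j, \nabla_{E_i}E_k),
\]
apply diagonality and the antisymmetry $\langle \nabla_{E_i}E_k, E_j\rangle = -\langle \nabla_{E_i}E_j, E_k\rangle$ to reduce it to $E_i[T(E_j,E_k)] + (\lambda_j - \lambda_k)\langle \nabla_{E_i}E_j, E_k\rangle$, and then symmetrise in $(i,j)$ via the Codazzi identity (using $T(E_k,E_i) = T(E_i,E_k)$) to obtain (i). Part (ii) is then immediate: if $k \neq i$ and $k \neq j$, diagonality forces $T(E_j,E_k) = T(E_k,E_i) = 0$, eliminating the scalar-derivative terms. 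For (iii), I set $j=k$ in (i); since $\lambda_i \neq \lambda_j$ forces $i \neq j$, we have $T(E_i,E_j) = 0$, so (i) collapses to $E_i[\lambda_j - \frac{R}{2(n-1)}] = (\lambda_j - \lambda_i)\langle \nabla_{E_j}E_j, E_i\rangle$ after one use of antisymmetry, which is (iii) up to relabeling and extension to non-unit $v$ by linearity.

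The genuine obstacle is (iv). Integrability of the $\lambda$-eigenspace distribution $D_\lambda$ follows from (ii): picking $E_a, E_b \in D_\lambda$ and $E_k$ with eigenvalue $\mu \neq \lambda$, (ii) combined with $\lambda - \mu \neq 0$ gives $\langle \nabla_{E_a}E_b, E_k\rangle = \langle \nabla_{E_b}E_a, E_k\rangle$, hence $\langle [E_a,E_b], E_k\rangle = 0$ and Frobenius applies. For total umbilicity I must show that $(\nabla_{E_a}E_b)^\perp = \delta_{ab} H$ for a single normal vector $H$. Part (iii) first forces $\langle \nabla_{E_b}E_b, E_k\rangle = \frac{1}{\lambda-\mu} E_k(\lambda - \frac{R}{2(n-1)})$ to take the same value $H_k$ for \emph{every} unit $E_b \in D_\lambda$. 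Applying (iii) to the rotated unit vector $X' := \frac{1}{\sqrt{2}}(E_a + E_b) \in D_\lambda$ and expanding
\[
\langle \nabla_{X'} X', E_k\rangle = \tfrac{1}{2}\bigl(\langle \nabla_{E_a}E_a, E_k\rangle + \langle \nabla_{E_a}E_b, E_k\rangle + \langle \nabla_{E_b}E_a, E_k\rangle + \langle \nabla_{E_b}E_b, E_k\rangle\bigr),
\]
together with the symmetry supplied by (ii), then forces $\langle \nabla_{E_a}E_b, E_k\rangle = 0$ for $a \neq b$. Hence the second fundamental form of each $D_\lambda$-leaf equals $g|_{D_\lambda}\otimes H$ with $H = \sum_k H_k E_k$, proving total umbilicity. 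This rotation step is the main subtlety; everything else is routine bookkeeping once the Codazzi identity is written out in the eigenframe.
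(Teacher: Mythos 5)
Your proposal is correct and follows essentially the same route as the source the paper cites (the lemma is imported from \cite{Ki} and \cite{De} without proof here): expand the Codazzi identity for $r-\frac{R}{2(n-1)}g$ in a Ricci-eigen orthonormal frame using diagonality, specialize to get (ii) and (iii), and obtain (iv) via Frobenius plus the polarization/rotation trick $X'=\frac{1}{\sqrt2}(E_a+E_b)$ to kill the off-diagonal second fundamental form. All steps check out, including the key observation that the umbilic vector $H_k=\frac{E_k(\lambda-\frac{R}{2(n-1)})}{\lambda-\mu}$ is independent of the unit vector chosen in $D_\lambda$.
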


\begin{lemma}[Lemma 2.5 in \cite{Ki}] \label{aa8}
Let $(M^n,g,f)$  be an $n$-dimensional gradient Ricci soliton with harmonic Weyl curvature and non constant $f$.
For any point $p$ in the open dense subset $M_{r} \cap \{ \nabla f \neq 0  \}$ of $M^n$,
there is a neighborhood $U$ of $p$ where there exist  orthonormal Ricci-eigen vector fields $E_i$, $i=1, \cdots  , n$  such  that for all the eigenspace distributions $D_1, \cdots, D_k$ of ${r}$ in $U$,

  \medskip

 {\rm (i)}  $E_1= \frac{\nabla f}{|\nabla f| }$ is in $D_1$,

 {\rm (ii)} for $i>1$, $E_i$ is tangent to smooth level hypersurfaces of $f$,

 {\rm (iii)} let $d_l$ be the dimension of $D_l$ for $l=1, \cdots, k$, then $E_1,  \cdots , E_{d_1}  \in D_1$,   $ \ \ E_{d_1 +1}   ,  \cdots, E_{d_1+ d_2}  \in D_2 $, $\cdots$ , and    $E_{d_1 + \cdots +d_{k-1}+1}   ,  \cdots, E_{n}  \in D_k $.
\end{lemma}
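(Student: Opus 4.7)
The plan is to exploit the smoothness of Ricci-eigenspace distributions on $M_r \cap \{\nabla f \neq 0\}$ together with Lemma \ref{threesolbx}(i). Fix a point $p \in M_r \cap \{\nabla f \neq 0\}$ and choose a connected neighborhood $U$ of $p$ on which $E_r$ is constant, equal to $k$, and $\nabla f$ is nowhere zero. On such a $U$, the characteristic polynomial of $r$ has roots of fixed multiplicity, so the distinct Ricci eigenvalues $\mu_1, \ldots, \mu_k$ are smooth functions on $U$ and the associated eigenspace distributions are smooth orthogonal subbundles of $TU$ whose ranks $d_1, \ldots, d_k$ satisfy $d_1 + \cdots + d_k = n$.

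By Lemma \ref{threesolbx}(i), the unit field $E_1 := \nabla f / |\nabla f|$ is a Ricci eigenvector everywhere on $U$, say $rE_1 = \mu E_1$ with $\mu$ smooth. At $p$, $\mu(p)$ coincides with exactly one of the $\mu_l(p)$; since the $\mu_l$ are smooth and mutually distinct, by shrinking $U$ we may assume that $\mu \equiv \mu_1$ on $U$. Thus $E_1$ is a smooth section of one fixed eigenspace distribution, which I label $D_1$. Next I smoothly complete $E_1$ to an orthonormal frame of $D_1$: since $E_1$ is a unit section of $D_1$, the orthogonal complement $D_1 \cap E_1^{\perp}$ is a smooth subbundle of $D_1$ of rank $d_1 - 1$, and by shrinking $U$ further I pick any smooth orthonormal frame $E_2, \ldots, E_{d_1}$ for it. For each $l = 2, \ldots, k$, pick (after another shrinking if needed) a smooth orthonormal frame $E_{d_1 + \cdots + d_{l-1} + 1}, \ldots, E_{d_1 + \cdots + d_l}$ for $D_l$.

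The resulting frame $\{E_1, \ldots, E_n\}$ is orthonormal by construction and, since eigenspaces of the symmetric tensor $r$ corresponding to distinct eigenvalues are mutually orthogonal, it consists of Ricci eigenvectors arranged as in (iii). Property (i) is immediate. For (ii), every $E_i$ with $i > 1$ lies either in $D_1 \cap E_1^{\perp}$ (when $2 \leq i \leq d_1$) or in some $D_l$ with $l \geq 2$ (hence orthogonal to $D_1 \ni E_1$); either way $E_i \perp E_1 = \nabla f / |\nabla f|$, so $E_i$ is tangent to the smooth level hypersurfaces of $f$ (smooth because $\nabla f \neq 0$ on $U$). There is no real obstacle here beyond the bookkeeping needed to ensure smoothness of the local orthonormal bases within each $D_l$, which is a standard consequence of the smoothness of each $D_l$ as a subbundle.
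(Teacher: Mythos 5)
Your argument is correct and is essentially the standard proof: the paper itself only cites Lemma 2.5 of \cite{Ki} for this statement, and that proof likewise rests on the local constancy of $E_r$ on $M_r$ (giving smooth eigenvalue functions and smooth eigenspace distributions), Lemma \ref{threesolbx}(i) to place $E_1=\nabla f/|\nabla f|$ in one fixed distribution $D_1$, and local smooth orthonormal frames of each $D_l$. Nothing is missing.
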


These local orthonormal Ricci-eigen vector fields $E_i$ of Lemma \ref{aa8} shall be called an {\it adapted frame field} of $(M, g, f)$.
We set  $ \zeta_i:= - \langle   \nabla_{E_i}  E_i ,  E_1  \rangle=  \langle    E_i , \nabla_{E_i}  E_1  \rangle$, for $i >1$. By (\ref{grs}), $\nabla_{E_i}  E_1 = \nabla_{E_i} (\frac{\nabla f}{  | \nabla f |}) =   \frac{ -  R({E_i}, \cdot)+  \lambda g( {E_i}, \cdot  ) }{  | \nabla f |} $. So we may write:
\begin{equation} \label{lambda06ax}
\nabla_{E_i}  E_1 =   \zeta_i E_i     \ \    {\rm where}   \   \zeta_i =     \frac{- R(E_{i}, E_i)  + \lambda  }{| \nabla f|}.
\end{equation}
Due to Lemma \ref{threesolbx}, in a neighborhood of a point $p \in  \{ \nabla f \neq 0  \}$, $f$  may be considered as a function of the variable $s$ only, and $f^{'} := \frac{df}{ds} = |\nabla f|$.

\begin{lemma} \label{abc60byx} Let  $(M^n, g, f)$ be a GRS with harmonic Weyl curvature.
The Ricci eigenfunctions $\lambda_i$ associated to an adapted frame field $E_i$ in  $M_{r} \cap \{ \nabla f \neq 0  \}$
are constant on a connected component of a regular level hypersurface $\Sigma_c$ of $f$, and so depend on the local variable  $s$ only. Moreover, $\zeta_i$, $i=2, \cdots, n$, in  $\rm{(\ref{lambda06ax})}$ also depend on $s$ only.
 In particular, we have
$E_i (\lambda_j) = E_i (\zeta_k)= 0$ for $i,k >1$ and any $j$.
\end{lemma}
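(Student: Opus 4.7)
My plan is to elevate Lemma \ref{threesolbx}(ii), (iv) (which control $|\nabla f|$ and $\lambda_1$ on $\Sigma_c^0$) to the analogous statement for every Ricci eigenvalue $\lambda_j$ and every connection coefficient $\zeta_i$, using the harmonic Weyl hypothesis through its Codazzi reformulation. As a preliminary, $R$ itself is constant on $\Sigma_c^0$: the standard GRS identity $\tfrac12 dR = r(\nabla f,\cdot)$ gives $E_j(R) = 2|\nabla f|\, r(E_1,E_j) = 0$ for $j > 1$ since $E_1$ is Ricci-eigen, and together with Lemma \ref{threesolbx}(iv) this fixes both $R$ and $\lambda_1$ along each $\Sigma_c^0$.

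The core computation is to apply the Codazzi equation for $S = r - \tfrac{R}{2(n-1)} g$ with $X = E_i$, $Y = Z = E_j$ for $i,j>1$, $i \neq j$, and expand in the adapted frame, using that $r$ is diagonal and $\langle \nabla_{E_j} E_j, E_j \rangle = 0$. Since $E_i(R) = E_j(R) = 0$, this collapses to
\[
E_i(\lambda_j) \;=\; (\lambda_j - \lambda_i)\,\langle E_i, \nabla_{E_j} E_j \rangle .
\]
When $E_i$ and $E_j$ lie in the same Ricci eigenspace the prefactor kills the right side. When they lie in distinct eigenspaces $D_m$ and $D_l$, umbilicity of $D_l$-leaves (Lemma \ref{abc60x}(iv)) identifies $\langle E_i, \nabla_{E_j} E_j \rangle$ with the $E_i$-component of the mean curvature vector $H_l$ of the $D_l$-leaf, whose $E_1$-component is already pinned by $\langle H_l, E_1 \rangle = -\zeta_j$.

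The main obstacle is thus to show that $H_l$ has no component in any other eigenspace $D_m$ with $m \neq 1,l$. My plan here is to combine Lemma \ref{abc60x}(ii) applied to the triple $(1, E_q, E_a)$ with $E_q \in D_l$, $E_a \in D_m$ together with $\nabla_{E_a} E_1 = \zeta_a E_a$ from (\ref{lambda06ax}), which forces $\langle \nabla_{E_1} E_q, E_a \rangle = 0$ and hence preservation of each $D_l$ under the $E_1$-flow, with a further Codazzi identity on $S$ involving two orthogonal directions in $D_l$ and one in $D_m$, exploiting umbilicity to extract $(\mu_l - \mu_m) \langle H_l, E_a \rangle = 0$. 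Once each $\lambda_i$ is constant on $\Sigma_c^0$, (\ref{lambda06ax}) together with Lemma \ref{threesolbx}(ii) gives the same for each $\zeta_i$, and the closing vanishings $E_i(\lambda_j) = E_i(\zeta_k) = 0$ for $i,k > 1$ are automatic, since those vector fields are tangent to the regular level hypersurfaces of $s$ on which $\lambda_j$ and $\zeta_k$ have just been shown to be constant. I expect the delicate point to be precisely the orthogonality $\langle H_l, E_a \rangle = 0$ when three or more distinct eigenvalues appear among $\{\lambda_i : i>1\}$; this is exactly the configuration that the refined adapted frame of the later sections is designed to control.
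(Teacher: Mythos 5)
Your reductions are correct as far as they go: $E_j(R)=0$ for $j>1$ via $dR=2r(\nabla f,\cdot)$, the Codazzi identity collapsing to $E_i(\lambda_j)=(\lambda_j-\lambda_i)\langle E_i,\nabla_{E_j}E_j\rangle$ (this is just Lemma \ref{abc60x}(iii) with $E_iR=0$), and the observation that everything therefore hinges on showing $\langle H_l,E_a\rangle=0$ for $E_a$ in a different eigenspace $D_m$, $a>1$. But that last step is exactly equivalent to the statement being proved (since $E_a(\lambda_l)=(\lambda_l-\lambda_m)\langle H_l,E_a\rangle$), and the mechanism you propose for it does not produce it. Take $E_q\perp E_{q'}$ in $D_l$ and $E_a\in D_m$ and write out the Codazzi equation for $S$ in any ordering of these three slots: every term involving $\nabla_{E_q}E_{q'}$ projected onto $D_m$ is, by umbilicity, $\langle E_q,E_{q'}\rangle\langle H_l,E_a\rangle=0$, so both sides vanish identically and you get $0=0$, not $(\mu_l-\mu_m)\langle H_l,E_a\rangle=0$. (The same happens if you instead invoke the curvature identity of Lemma \ref{threesolx}: computing $R(E_q,E_a,E_q,E_1)=0$ in the frame reduces, after substituting $E_a(\zeta_l)=-(\zeta_a-\zeta_l)\langle H_l,E_a\rangle$, to a tautology.) Moreover the configuration "two orthogonal directions in $D_l$" does not even exist when $\dim D_l=1$, which is a case you must handle, as is $E_i(\lambda_i)$ for a one-dimensional eigenspace.

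Your closing remark — that this orthogonality is "exactly the configuration that the refined adapted frame of the later sections is designed to control" — cannot be used as a fallback: the construction of the refined frame in Section 4 (Lemmas \ref{L61}, \ref{L42}) explicitly invokes Lemma \ref{abc60byx} to know that $\zeta_i$ depends on $s$ only, so deferring the crux there is circular. So the proposal has a genuine gap at its central step. For comparison, the paper does not reprove the statement either; it cites Lemma 2.7 of \cite{Ki} (four-dimensional case, asserting the argument extends) and Lemma 3 of \cite{Sh}, which is where the actual content lives. A complete write-up would have to reproduce that argument rather than the vacuous Codazzi manipulation sketched here.
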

\begin{proof}
Lemma 2.7 of \cite{Ki} gives the proof in the four dimensional case. Similar argument can be given in higher dimension. Or, one can follow the proof of Lemma 3 in \cite{Sh}.
\end{proof}


 \section{Gradient Ricci soliton with harmonic Weyl curvature in an  adapted frame field}
For an adapted frame field $E_i$,  from (\ref{grs}), (\ref{lambda06ax}) and Lemma \ref{threesolbx} (vi) we have
\begin{eqnarray}
f^{''}  =  -\lambda_1  + \lambda, \hspace{1.2cm} \label{x1} \\
\zeta_i f^{'} =  -\lambda_i  + \lambda, \ \ \    i >1. \label{x1z}
\end{eqnarray}
  By direct curvature computation we get $R_{1ii1} =-\zeta_i^{'}  -  \zeta_i^2.$
Lemma \ref{threesolx} gives
\begin{eqnarray} \label{x2}
R_{1ii1} =-\zeta_i^{'}  -  \zeta_i^2  = \frac{\lambda_{1}}{n-1} = \frac{R^{'}}{2(n-1) f^{'}},  \ \ \    i >1.
\end{eqnarray}

\begin{lemma} \label{gg}
Suppose that there is $i_0>1$ with $\zeta_{i_0} =0$
in an  adapted frame field $E_i$ on an open subset $U$ of a gradient Ricci soliton $(M, g, f)$ with harmonic Weyl curvature.

 Then  $(U, g)$ is locally isometric to a Ricci flat metric  with $\lambda=0$ or
a Riemannian product $ (N_1^{k}, g_1 ) \times (N_2^{n-k}, g_2)$, where $g_1$ is Ricci flat and $r_{g_2} =  \lambda g_2 $, $\lambda \neq 0$, $1 \leq k \leq n-2$. Moreover $f$  is the pull-back of a function  on $N_1^k$, denoted by $f$ again, and    $ (N_1^{k}, g_1, f )$  is itself a gradient Ricci soliton.
We have $f = \frac{\lambda s^2}{2}$ modulo a constant, where $s$ is a function such that $\nabla s =  \frac{\nabla f }{| \nabla f |}$.
\end{lemma}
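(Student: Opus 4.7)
The plan is to bootstrap from the hypothesis $\zeta_{i_0}=0$ via (\ref{x1}), (\ref{x1z}), (\ref{x2}), and then exploit Lemma \ref{abc60x} to produce a local product splitting. First, (\ref{x1z}) at $i=i_0$ gives $\lambda_{i_0}=\lambda$, and (\ref{x2}) at $i=i_0$ reduces to $0=\lambda_1/(n-1)$, so $\lambda_1=0$; then (\ref{x1}) becomes $f''=\lambda$, which integrates to $f=\frac{\lambda}{2}s^2+As+B$, and shifting $s$ by a constant absorbs the linear term when $\lambda\neq 0$, giving $f=\frac{\lambda}{2}s^2$ modulo a constant. Moreover, since $\lambda_1=0$, the identity $\lambda_1/(n-1)=R'/(2(n-1)f')$ in (\ref{x2}) forces $R$ to be constant on $U$, and for each $i>1$, (\ref{x2}) reduces to $\zeta_i'+\zeta_i^2=0$, so $\zeta_i\equiv 0$ or $\zeta_i=1/(s+c_i)$ for a constant $c_i$, with $\lambda_i=\lambda-\zeta_i f'$ by (\ref{x1z}).

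Differentiating the constant $R=\sum_{i>1}\lambda_i$ in $s$ yields $\sum_{i>1}(\zeta_i^2 f'-\zeta_i\lambda)=0$. When $\lambda=0$ this is $f'\sum\zeta_i^2=0$, and since $f'\neq 0$ on $U$, every $\zeta_i=0$, every $\lambda_i=0$, and $(U,g)$ is Ricci flat. When $\lambda\neq 0$, substituting $f'=\lambda s$ reduces the identity to $\sum_{i>1}c_i/(s+c_i)^2=0$; grouping by the distinct values of $c_i$ and using linear independence of the rational functions $1/(s+d)^2$ for distinct $d$, the coefficients force $c_i=0$ whenever $\zeta_i\neq 0$. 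Hence each $i>1$ satisfies either $(\zeta_i,\lambda_i)=(0,\lambda)$ or $(\zeta_i,\lambda_i)=(1/s,0)$.

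It remains, when $\lambda\neq 0$, to extract the Riemannian splitting. Set $A=\{1\}\cup\{i>1:\zeta_i=1/s\}$, whose vectors span the $0$-eigenspace, and $B=\{i>1:\zeta_i=0\}$, whose vectors span the $\lambda$-eigenspace. Lemma \ref{abc60x}(iv) gives integrability and total umbilicity of each distribution. Applying Lemma \ref{abc60x}(iii) with $u\in B$ and $v\in A$, the constancy of $\lambda$ and $R$ annihilates the left-hand side and $\lambda\neq 0$ then yields $\langle\nabla_{E_i}E_i,E_j\rangle=0$; umbilicity promotes this to the vanishing of the full second fundamental form of the $B$-leaves, and the symmetric argument handles $A$. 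Two complementary, parallel, integrable distributions now yield a local isometric splitting $g=g_1+g_2$ via de Rham, with $r_{g_1}=0$ and $r_{g_2}=\lambda g_2$ from the eigenvalue assignments. Since $f=f(s)$ and $E_1\in A$, the function $f$ descends to the $N_1$-factor, and restricting (\ref{grs}) to $A$-directions verifies that $(N_1,g_1,f)$ is itself a GRS. I expect the main technical point to be the rational-function linear-independence step, in particular handling repeated and vanishing values of $c_i$ cleanly; the passage from umbilicity to total geodesicity via Lemma \ref{abc60x}(iii) and the de Rham decomposition are then routine.
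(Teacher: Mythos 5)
Your proof is correct and follows essentially the same route as the paper: derive $\lambda_1=0$, $f''=\lambda$ and the ODE $\zeta_i'+\zeta_i^2=0$ from (\ref{x1})--(\ref{x2}), use the constancy of $R$ (via linear independence of the rational functions of $s$) to force the eigenvalues with $\zeta_i\neq 0$ to vanish, and then split by de Rham using Lemma \ref{abc60x}(iii)--(iv). The only cosmetic differences are that you differentiate $R$ instead of reading off its partial-fraction expansion directly, and you omit the one-line check that $k\le n-2$ (a one-dimensional factor cannot satisfy $r_{g_2}=\lambda g_2$ with $\lambda\neq 0$).
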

 \begin{proof}We first suppose that there is $i_0>1$ with $\zeta_{i_0} =0$, and  $i_1>1$ with $\zeta_{i_1} \neq 0$.
Then from (\ref{x2}), $\zeta_i^{'}  +  \zeta_i^2=0$ for any $i >1$ so that
$\zeta_i(s) = \frac{1}{ s + c_i} $ or $\zeta_i(s) = 0$.
We may assume that $E_i$ are ordered so that $\zeta_i = \frac{1}{ s + c_i}  $ for $ 2 \leq i \leq k$,  and $\zeta_i =0  $ for $ k+1 \leq i \leq n$.
From (\ref{x2})  $ \lambda_{1} = R^{'}=0$, $f^{''} = \lambda$, $f^{'} = \lambda s + c_1$ for a constant $c_1$.
 For $ 2 \leq i \leq k$, (\ref{x1z}) gives $\lambda_i =  \lambda-\zeta_i f^{'}=\lambda-\frac{1}{ s + c_i} ( \lambda s + c_1 ) =\frac{\lambda {c}_i -c_1}{ s + c_i} $.
 For $ k+1 \leq i \leq n$,  $\lambda_i =  \lambda $.
We have $R =   \sum_{i=1}^n \lambda_i =  (n-k)  \lambda +   \sum_{i=2}^k \frac{\lambda {c}_i -c_1}{ s + c_i}  $. As $R$ is a constant, considering distinct $c_i$'s if any, we get $\lambda {c}_i -c_1 =0 $  for $ 2 \leq i \leq k$ so that $\lambda_i=0$.
 Now   $R = (n-k) \lambda  $.
If  $\lambda=0$, then $g$ is Ricci flat.
 Suppose that $\lambda \neq 0$. By Lemma \ref{abc60x} (iii) and (iv), both $\lambda_i$-eigenspaces, $i=1, n$, are integrable and totally geodesic. By the de Rham decomposition theorem, $(M, g)$ is locally isometric to a Riemannian product $ (N_1^{k}, g_1 ) \times (N_2^{n-k}, g_2)$, where $g_1$ is Ricci flat and $r_{g_2} =  \lambda g_2 $. If $k=n-1$, then  $\lambda=0$, a contradiction. So, $2 \leq k \leq n-2$. We may write $f^{'} = \lambda s $ by a translation of $s$ by a constant, and $f = \frac{\lambda s^2}{2}$ modulo a constant.
As $g$ is locally the Riemannian product of $g_1$ and $g_2$, from $\nabla^{g} d f = - r_{g} +\lambda g$,  we  get
$\nabla^{g_1} d f = - r_{g_1} +\lambda g_1= \lambda g_1$, i.e. $ (N_1^{k}, g_1, f )$  is itself a gradient Ricci soliton.

\medskip
If $\zeta_i=0$ for all $i >1$, then $\lambda_i=\lambda$, $\lambda_1= 0$ and $f^{''} = \lambda$. If  $\lambda=0$, then $g$ is Ricci flat.
 Suppose that $\lambda \neq 0$. By Following the argument in the above paragraph,
$(M, g)$ is locally isometric to a Riemannian product $ (N_1^{1}, g_1 ) \times (N_2^{n-1}, g_2)$, where $g_1$ is one-dimensional metric  and $r_{g_2} =  \lambda g_2 $. Summarizing, we proved the lemma.
\end{proof}

\medskip
 Now suppose that $\zeta_i $ is not the zero function for any $i >1$. By Lemma \ref{abc60byx}, $\zeta_i = \zeta_i(s)$.
 Setting $\zeta_i = \frac{u_i^{'}}{u_i}$ for a nowhere-zero function $u_i (s)$,  we have $\frac{u_i^{''}}{u_i}=\frac{u_2^{''}} {u_2}$ for $ i \geq 2$ from (\ref{x2}).
Viewing this as an ordinary differential equation for $u_i$, we solve it by reduction of order by  setting $u_i = u_2 k_i$ for a nowhere-zero function $k_i :=k_i(s)$. Now $u_i'  = u_2^{'} k_i +  u_2 k_i^{'}$. $u_i''  = u_2^{''} k_i + 2 u_2^{'} k_i^{'} +  u_2 k_i''$.
Then
$ \frac{u_2^{''}}{u_2}  =\frac{ u_2^{''} k_i + 2 u_2^{'} k_i^{'} +  u_2 k_i''}{u_2 k_i} $. So, $2 u_2^{'} k_i^{'} +  u_2 k_i'' =0$. Integration gives
 $ k_i^{'} = \frac{d_i}{ (u_2)^2 }$ for a constant $d_i$, and  $k_i = e_i +\int_{s_0}^s \frac{d_i}{ (u_2)^2 } ds \ $ for constants $e_i$ and $s_0$. Then $d_i =0$ iff $\zeta_i =  \frac{u_i^{'}}{u_i} =   \frac{u_2' }{u_2  } =\zeta_2.  $

 Suppose $d_i \neq 0$. Then  $\frac{  k_i'}{k_i }  = \frac{1}{(u_2)^2 (c_i +\int_{s_0}^s \frac{ds}{ (u_2)^2 }) } $ for $c_i =  \frac{{ e_i} }{d_i}$.
As $ \zeta_i =  \frac{u_2^{'} k_i +  u_2 k_i'}{u_2 k_i } =   \frac{u_2^{'} }{u_2  } + \frac{  k_i'}{k_i } $, we get
$ \zeta_i =   \frac{u_2^{'} }{u_2  } +  \frac{1}{(u_2)^2 (c_i +\int_{s_0}^s \frac{ds}{ (u_2)^2 }) }.$
We define an equivalence relation $\sim$  on the set  $\{ 2, 3, \cdots, n \} $ as below
 \begin{eqnarray} \label{x3j0dk}
i \sim j \ \  {\rm  iff} \ \  \zeta_i = \zeta_j.
\end{eqnarray}
and let $[i]$ denote the equivalence class of $i$. Clearly $i \sim j$  iff $ c_i = c_j$.
Set
 \begin{eqnarray} \label{x3j0da2}
 h(s)=\int_{s_0}^s \frac{ds}{ (u_2)^2 },
\end{eqnarray}
and we get
 \begin{eqnarray} \label{x3j0d}
 \begin{cases}
 \ \  \zeta_i =   \frac{u_2^{'} }{u_2  }  \ \  \ {\rm when}  \ \ [i]= [2],\\
 \  \zeta_i =   \frac{u_2^{'} }{u_2  } +  \frac{h^{'}}{ c_i + h }  \ \  \ {\rm when}  \ \ [i] \neq [2].
 \end{cases}.
\end{eqnarray}
Note that $c_i + h(s)$ is nowhere zero as $k_i$ is.

\begin{lemma}\label{77bx01}Let  $(M^n, g, f)$ be a GRS with harmonic Weyl curvature.
In an adapted frame field $\{ E_j\}$
in an open subset $W$ of $M_{r} \cap \{ \nabla f \neq 0  \}$, we have for $i \geq 2$;
\begin{eqnarray}
 \ \ \ \lambda_{i}
= -\zeta_i^{'}  -  \zeta_i^2  + \sum_{j=2, j \neq i}^n \{  -\zeta_i \zeta_j + E_i \Gamma_{jj}^i  - E_j \Gamma_{ij}^i -(\Gamma_{ii}^j)^2  -(\Gamma_{jj}^i)^2 \} \label{y01}  \\
+\sum_{j=2, j \neq i}^n \sum_{k \neq 1, i, j } \{\Gamma_{jj}^k \Gamma_{ik}^i
   -\Gamma_{ij}^k\Gamma_{jk}^i  -( \Gamma_{ij}^k-\Gamma_{ji}^k) \Gamma_{kj}^i \}. \hspace{1.3cm}  \nonumber
\end{eqnarray}
If $\zeta_i =   \zeta_j \neq \zeta_k $ for $i, j, k \in \{2,3, \cdots , n \}$, then setting  $ \Gamma_{ij}^k = \langle \nabla_{E_i} E_j, E_k \rangle $,
\begin{eqnarray} \label{y03}
\Gamma_{ij}^k=0  \ \ \ \  {\rm  and}  \ \ \ \   \Gamma_{1i}^k =0.
\end{eqnarray}

\end{lemma}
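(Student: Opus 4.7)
The plan for (\ref{y01}) is a direct (but delicate) expansion of $\lambda_i = r(E_i, E_i) = \sum_{k \neq i} R(E_i, E_k, E_k, E_i)$ in the adapted frame. The $k=1$ summand is immediate from (\ref{x2}), contributing $-\zeta_i' - \zeta_i^2$. For each $k = j$ with $j > 1$, $j \neq i$, I would expand $R(E_i, E_j, E_j, E_i) = \langle \nabla_{E_i}\nabla_{E_j} E_j - \nabla_{E_j}\nabla_{E_i} E_j - \nabla_{[E_i, E_j]} E_j,\, E_i\rangle$ by writing every covariant derivative in terms of Christoffel symbols. This produces the derivative terms $E_i \Gamma_{jj}^i - E_j \Gamma_{ij}^i$ together with three sums over an inner dummy index.

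To reach (\ref{y01}), I would split each inner sum according to whether the dummy index equals $1$, $i$, $j$, or is disjoint from $\{1, i, j\}$. The key simplifications rely on: (a) the orthonormality identities $\Gamma_{ab}^b = 0$ and $\Gamma_{ab}^c = -\Gamma_{ac}^b$ for $b \neq c$; (b) the adapted-frame identities derived from (\ref{lambda06ax}) and Lemma \ref{threesolbx}(vi), namely $\Gamma_{i1}^i = \zeta_i$, $\Gamma_{ii}^1 = -\zeta_i$, $\Gamma_{i1}^k = 0$ for $k \neq i$ (since $\nabla_{E_i} E_1 = \zeta_i E_i$), and $\Gamma_{ij}^1 = 0$ for distinct $i, j$ both $> 1$ (from $\Gamma_{ij}^1 = -\Gamma_{i1}^j$). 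The dummy-index value $\ell = 1$ in $\sum_\ell \Gamma_{jj}^\ell \Gamma_{i\ell}^i$ collapses to $-\zeta_i \zeta_j$; the value $\ell = i$ in the last sum $-\sum_\ell (\Gamma_{ij}^\ell - \Gamma_{ji}^\ell)\Gamma_{\ell j}^i$ yields $-(\Gamma_{ij}^i)^2 = -(\Gamma_{ii}^j)^2$; and $\ell = j$ in that same sum gives $-(\Gamma_{jj}^i)^2$ once $\Gamma_{ji}^j = -\Gamma_{jj}^i$ is substituted. All other special-index contributions vanish, and what survives is exactly the advertised triple sum restricted to $k \notin \{1, i, j\}$.

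For (\ref{y03}), I would argue as follows. Equation (\ref{x1z}) shows $\zeta_a = \zeta_b$ is equivalent to $\lambda_a = \lambda_b$, so the hypothesis becomes $\lambda_i = \lambda_j \neq \lambda_k$. The vectors $E_i, E_j$ then span two orthonormal directions inside a single Ricci-eigen distribution, while $E_k$ lies in a different eigen-distribution. By Lemma \ref{abc60x}(iv) that distribution is totally umbilical, so the normal component of $\nabla_{E_i} E_j$ is $g(E_i, E_j) H = 0$; projecting onto $E_k$ yields $\Gamma_{ij}^k = 0$. For $\Gamma_{1i}^k$, apply Lemma \ref{abc60x}(ii) to the ordered triple $(1, i, k)$ to obtain $(\lambda_i - \lambda_k)\Gamma_{1i}^k = (\lambda_1 - \lambda_k) \Gamma_{i1}^k$; but $\Gamma_{i1}^k = \zeta_i \langle E_i, E_k\rangle = 0$ by (\ref{lambda06ax}), and since $\lambda_i \neq \lambda_k$ this forces $\Gamma_{1i}^k = 0$. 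The main obstacle is the bookkeeping in the proof of (\ref{y01}): one must track signs carefully through the various antisymmetries so that the two distinct quadratic contributions $-(\Gamma_{ii}^j)^2$ and $-(\Gamma_{jj}^i)^2$ emerge separately with correct signs, rather than collapsing into a single squared term.
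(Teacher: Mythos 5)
Your proposal is correct and takes essentially the same route as the paper: (\ref{y01}) is obtained by the same direct expansion of $R_{ijji}$ in Christoffel symbols together with (\ref{x2}) for the $E_1$-direction, and (\ref{y03}) follows from Lemma \ref{abc60x} and (\ref{lambda06ax}) exactly as you describe. The only cosmetic difference is that for $\Gamma_{ij}^k=0$ the paper polarizes Lemma \ref{abc60x}(iii) over $E_i$, $E_j$, $E_i+E_j$ and then adds the integrability statement, whereas you invoke the total umbilicity of the eigen-leaves directly; both are just Lemma \ref{abc60x}(iv).
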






\begin{proof}
For distinct $i, j \in \{2,3, \cdots , n \}$, we can directly compute

$R_{ijji}:= R(E_i, E_j, E_j, E_i)
= -\zeta_i \zeta_j + E_i \Gamma_{jj}^i  - E_j \Gamma_{ij}^i -(\Gamma_{ii}^j)^2  -(\Gamma_{jj}^i)^2+ \sum_{k \neq 1, i, j }\Gamma_{jj}^k \Gamma_{ik}^i
   -\Gamma_{ij}^k\Gamma_{jk}^i  -( \Gamma_{ij}^k-\Gamma_{ji}^k) \Gamma_{kj}^i
$. From (\ref{x2}) we get (\ref{y01}).

If $\zeta_i =   \zeta_j \neq \zeta_k $ for $i, j, k \in \{2,3, \cdots , n \}$, then $\lambda_i =   \lambda_j \neq \lambda_k $.
 From Lemma  \ref{abc60x} (iii) and Lemma \ref{abc60byx},
 $ \langle  \nabla_{E_i} E_i ,   E_k\rangle=0$ and  $ \langle  \nabla_{E_i+ E_j} (E_i + E_j),   E_k\rangle=0$. By   Lemma  \ref{abc60x} (iv), $ \langle  \nabla_{E_i} E_j   -\nabla_{E_j} E_i  ,   E_k\rangle=0$. So, $\Gamma_{ij}^k=0$.
From Lemma  \ref{abc60x} {\rm (ii)}, $(\lambda_i - \lambda_k) \langle \nabla_{E_1} E_i, E_k\rangle=(\lambda_1 - \lambda_k ) \langle \nabla_{E_i} E_1, E_k\rangle $.  As $\langle \nabla_{E_i} E_1, E_k\rangle=0 $ from (\ref{lambda06ax}),  $\langle\nabla_{E_1} E_i, E_k\rangle=0$.
This proves (\ref{y03}).




\end{proof}



 \section{Refined adapted Ricci-eigen vector fields}
 Let  $p_0  $  be a point in the open dense subset $M_{r} \cap \{ \nabla f \neq 0  \}$ of  a gradient Ricci soliton $M^n$ with harmonic Weyl curvature. Say $f(p_0) = c$.
Suppose that there is  an adapted frame field $E_1,  \cdots,  E_n$ and the functions $\lambda_i$ with $R(E_i, \cdot ) = \lambda_i g(E_i, \cdot)$ on an open neighborhood $U$ of $p_0$.
We consider the local one-parameter group action $ \theta(p, t )$ associated to $E_1= \frac{\nabla f}{|\nabla f| }$ near $p_0$, i.e.  $\frac{d ( \theta(p, t))}{dt}|_{t= 0} =  E_1 (p)$.
We suppose that
$\theta(p, t)$ is defined for  $ (p, t) \in ( V \cap  f^{-1}(c)) \times (-\varepsilon, \varepsilon ) $ for an open neighborhood   $V$ of $p_0$ in $U$ and some number $\varepsilon>0$.
Set $W:=V \cap  f^{-1}(c)$. For notational convenience we often denote $ \theta(p, t )$ by  $ \theta_p( t )$  or $ \theta_t(p) $. For the map $\theta_t : p \mapsto \theta_p( t )$, we denote the derivative as ${\theta_{t}}_*$.
We use the Lie derivative  $L_{E_1 } E_i (q) := (L_{E_1 } E_i) (q) =   \lim_{h \rightarrow 0} \frac{1}{h} \{ {E_i}(q)- {\theta_{h}}_* {E_i}({\theta_{-h} (q)}) \} $.

\begin{lemma} \label{L61}
For $i \geq 2$,
${\theta_{t}}_* ({E_i}(p))$  belongs to the $\lambda_i$-eigenspace in $T_{\theta_{t}(p)}M$ for any $p \in W$ and $t \in (-\varepsilon, \varepsilon )$.
\end{lemma}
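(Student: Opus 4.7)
The plan is to reduce the claim to showing that the $\lambda_i$-eigendistribution $D_i$ of the Ricci tensor is preserved by the flow $\theta_t$ of $E_1$; once this is established, the vector ${\theta_t}_* E_i(p)$, which starts in $D_i(p)$, must stay in $D_i$, i.e.\ in the $\lambda_i$-eigenspace at $\theta_t(p)$. By the standard fact that a smooth distribution $D$ is invariant under the flow of a vector field $X$ if and only if $[X, Y]$ is a section of $D$ for every local section $Y$ of $D$, it suffices to prove $[E_1, Y] \in D_i$ whenever $Y$ is a smooth section of $D_i$. Inside $M_r$ the number of distinct Ricci eigenvalues is locally constant, so $D_i$ is smooth and locally spanned by those $E_j$ with $\lambda_j = \lambda_i$; by the Leibniz rule it is therefore enough to verify $[E_1, E_j] \in D_i$ for such $j$, and by symmetry it is enough to do it for $E_i$ itself.

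The computation of $[E_1, E_i]$ is where the hypotheses enter. Using $\nabla_{E_i} E_1 = \zeta_i E_i$ from (\ref{lambda06ax}), I get $[E_1, E_i] = \nabla_{E_1} E_i - \zeta_i E_i$, so I only need $\nabla_{E_1} E_i \in D_i$. Its $E_1$-component vanishes by Lemma \ref{threesolbx}(vi): $\langle \nabla_{E_1} E_i, E_1\rangle = -\langle E_i, \nabla_{E_1} E_1\rangle = 0$. For the $E_k$-component with $k \geq 2$ and $\lambda_k \neq \lambda_i$, I apply Lemma \ref{abc60x}(ii) with indices $(i,1,k)$, obtaining
\[
(\lambda_1 - \lambda_k)\langle \nabla_{E_i} E_1, E_k\rangle = (\lambda_i - \lambda_k)\langle \nabla_{E_1} E_i, E_k\rangle.
\]
Since $\nabla_{E_i} E_1 = \zeta_i E_i$ and $k \neq i$, the left side is zero, and $\lambda_i \neq \lambda_k$ forces $\langle \nabla_{E_1} E_i, E_k\rangle = 0$. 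Thus $\nabla_{E_1} E_i$ has components only along those $E_k$ with $\lambda_k = \lambda_i$, i.e.\ $\nabla_{E_1} E_i \in D_i$, and hence $[E_1, E_i] \in D_i$.

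Combining the two parts yields $\theta_t$-invariance of $D_i$, which gives the lemma. The only substantive step is the pointwise verification that $\nabla_{E_1} E_i$ lies in $D_i$, and that reduces to one application of the Codazzi identity for the Schouten tensor (Lemma \ref{abc60x}(ii)) combined with the explicit form of $\nabla_{E_i} E_1$; the passage from bracket-invariance to flow-invariance is a textbook fact and should be invoked rather than reproved.
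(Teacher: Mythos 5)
Your proof is correct and follows essentially the same route as the paper: the vanishing $\langle \nabla_{E_1}E_i, E_k\rangle = 0$ for $\lambda_k \neq \lambda_i$ that you derive from Lemma \ref{abc60x}(ii) together with $\nabla_{E_i}E_1=\zeta_i E_i$ is exactly the paper's (\ref{y03}), and the flow-invariance criterion you invoke for the eigendistribution is proved by the very same linear-ODE-plus-uniqueness argument for the inner products $g({\theta_{t}}_* E_i, E_j)$ that the paper writes out explicitly. No gaps.
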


\begin{proof}
Let $q $ be a point in  $\theta( W \times (-\varepsilon, \varepsilon )) $.  For  $i \geq 2$, $j \geq 1$  define $\beta_{ij}(t)  =  g   ( {\theta_{t}}_* {E_i}({\theta_{-t} (q)})  , {E_j}(q)  )$ for   $t $ such that $\theta_{-t} (q)$ also lies $\theta( W \times (-\varepsilon, \varepsilon )) $. We compute
\begin{eqnarray*}\frac{d}{dt} \{ {\theta_{t}}_* {E_i}({\theta_{-t} (q)}) \} =  \lim_{h \rightarrow 0} \frac{1}{h} \{{\theta_{t}}_*  {\theta_{h}}_* {E_i}({\theta_{-t-h} (q)}) -   {\theta_{t}}_* {E_i}({\theta_{-t} (q)})\}   \hspace{1.5cm}  \\
=  {\theta_{t}}_* [ \lim_{h \rightarrow 0} \frac{1}{h} \{  {\theta_{h}}_* {E_i}({\theta_{-t-h} (q)}) -   {E_i}({\theta_{-t} (q)})\}]
=  - {\theta_{t}}_* [ L_{E_1 } E_i (\theta_{-t} (q))]. \end{eqnarray*}


By Lemma \ref{77bx01},  $ \Gamma_{1i}^l=0$ if $l \notin [i]$. So,  we have $L_{E_1 } E_i = \nabla_{E_1 } E_i  - \nabla_{E_i } E_1 = \sum_{l \in [i]. l \neq i}   \Gamma_{1i}^l E_l -  \zeta_i E_i   $. Then,
\begin{eqnarray*}
\beta_{ij}^{'} (t)  & =  -g   (  {\theta_{t}}_* [ L_{E_1 } E_i (\theta_{-t} (q))]  , {E_j}(q)  )\hspace{6.4cm} \\
&= - g   (  {\theta_{t}}_* [  \sum_{l \in [i]. l \neq i}   \Gamma_{1i}^l( \theta_{-t} (q)) {E_l }({\theta_{-t} (q)}) -  \zeta_i( \theta_{-t} (q)) {E_i}({\theta_{-t} (q)})]  , {E_j}(q)  ) \  \ \   \\
&=  - g  (  [  \sum_{l \in [i]. l \neq i}   \Gamma_{1i}^l (\theta_{-t} (q))  {\theta_{t}}_*  {E_l }({\theta_{-t} (q)})-  \zeta_i (\theta_{-t} (q))  {\theta_{t}}_* {E_i}({\theta_{-t} (q)}) ]  , {E_j}(q)  )  \\
&= -\sum_{l \in [i]. l \neq i}  \Gamma_{1i}^l(\theta_{-t} (q)) \beta_{lj} (t)+ \zeta_i (\theta_{-t} (q))  \beta_{ij} (t). \hspace{3.9cm}
\end{eqnarray*}

We fix $ j \notin [i]$.   We view
$\beta_{ij}^{'}
= - \sum_{l \in [i]. l \neq i}   \Gamma_{1i}^l(\theta_{-t} (q)) \beta_{lj}   +\zeta_i(\theta_{-t} (q)) \beta_{ij}
 $
as  a system of  first order ordinary differential equations  for $r$ functions $\beta_{i_1 j} (t)$, $\beta_{i_2j} (t)$,  $ \cdots,   \beta_{i_r j} (t)$ where  $E_{i_1}, \cdots ,  E_{i_r}$ is the basis of the $\lambda_i$-eigenspace.

As $\beta_{i_1j}(0)= \cdots = \beta_{i_r j}(0)  = 0 $,  by the uniqueness of the solution of the ODE,   $\beta_{ij} (t) =  g  ( {\theta_{t}}_* {E_i}({\theta_{-t} (q)}) , {E_j}(q)  )=0$.
If $q = \theta_{t_0} (p) $ for some $t_0 \in (-\varepsilon, \varepsilon )$ and  $p \in W$, then ${\theta_{t}}_* {E_i}({\theta_{-t} (\theta_{t_0} (p))}) $ is orthogonal to  ${E_j}(\theta_{t_0} (p)) $. When $t=t_0$,
 ${\theta_{t_0}}_* {E_i}( {\theta_{-t_0} (\theta_{t_0}(p)))={\theta_{t_0}}_*( {E_i}(p}))  $ is orthogonal to  ${E_j}(\theta_{t_0}(p)) $.
 These $E_j(\theta_{t_0}(p))$,  $j \notin [i]$, span the subspace orthogonal to the $\lambda_i$-eigenspace in  $T_{\theta_{t_0}(p)}M$.
So, ${\theta_{t_0}}_* ({E_i}(p))$  belongs to the $\lambda_i$ -eigenspace in  $T_{\theta_{t_0}(p)}M$ for $p \in W$ and $t_0 \in (-\varepsilon, \varepsilon )$.
\end{proof}

The function $\zeta_i$ is defined via  (\ref{lambda06ax}) and the function $s$ is defined modulo a constant in Lemma \ref{threesolbx}. We may set $s(p)  = 0 $ for $p \in W$.
 By Lemma \ref{abc60byx},  $\zeta_i$ depends only on $s$.    As  $\frac{d}{dt}  \theta_t (p) = E_1 ( \theta_t (p) )= \nabla s ( \theta_t (p) ) $, we have   $s( \theta_t  (p)) = t$.
For convenience we write
$ \zeta_i(\theta_{t} (p))=\zeta_i(s(\theta_{t} (p))) =\zeta_i( t) $.

\begin{lemma} \label{L42} Following the terminology in the beginning paragraph of this section,
we let $i \geq 2$ and $\zeta_i(s)$  be as in the above paragraph.
For $p  \in  W$, $t \in (-\varepsilon, \varepsilon )$, let $F_i$ be the vector fields defined on $ \theta(W \times  (-\varepsilon, \varepsilon ) )$  by $F_i (\theta_{t} (p)) =e^{- \int_{0}^t   \zeta_i(v) dv }  {\theta_{t}}_* ({E_i}(p))$.
If  $E_{i_1}(p), \cdots ,  E_{i_r}(p)$ is the basis of the $\lambda_i$-eigenspace in $T_{p}M$, then
$F_{i_1}(\theta_{t} (p)), \cdots ,  F_{i_r}(\theta_{t} (p))$
 form an orthonormal basis  for  the $\lambda_i$-eigenspace in $T_{\theta_{t}(p)}M$.

\end{lemma}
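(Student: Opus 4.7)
The plan is as follows. Lemma \ref{L42} splits into two assertions: that the $F_{i_j}(\theta_t(p))$ lie in the $\lambda_i$-eigenspace, and that they are orthonormal. The first is immediate from Lemma \ref{L61}, since the indices $i_j$ all lie in the same class $[i]$ and hence share a common $\zeta_i$, so each $F_{i_j}(\theta_t(p))$ is a nonzero scalar multiple of ${\theta_t}_*(E_{i_j}(p))$, which by Lemma \ref{L61} sits in the $\lambda_i$-eigenspace at $\theta_t(p)$. Thus only orthonormality needs genuine argument.

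For the inner-product computation I would first upgrade (\ref{lambda06ax}) to a statement about arbitrary Ricci-eigen vectors: for any $v$ lying in the $\lambda_i$-eigenspace at a point where $\nabla f \neq 0$, one has $\nabla_v E_1 = \zeta_i v$. Indeed $v$ is tangent to a level set of $f$, so $v(|\nabla f|) = 0$ by Lemma \ref{threesolbx}(ii), reducing $\nabla_v E_1$ to $(\nabla df)(v,\cdot)^\sharp/|\nabla f|$; the GRS equation then yields $(\lambda - \lambda_i)v/|\nabla f| = \zeta_i v$. This frees the identity from the particular choice of frame $E_i$.

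Next, set $X_j(t) := {\theta_t}_*(E_{i_j}(p))$ and extend to flow-invariant vector fields $\tilde{X}_j$ on $\theta(W\times(-\varepsilon,\varepsilon))$ by applying the same pushforward construction to every starting point in $W$. Then $[E_1,\tilde{X}_j] = 0$, so $\nabla_{E_1}\tilde{X}_j = \nabla_{\tilde{X}_j}E_1$; since $\tilde{X}_j(\theta_t(p)) = X_j(t)$ is in the $\lambda_i$-eigenspace by Lemma \ref{L61}, the previous paragraph gives $\nabla_{E_1}\tilde{X}_j = \zeta_i(t)\tilde{X}_j$ along the curve $\theta_t(p)$. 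Consequently $\alpha_{jk}(t) := g(X_j(t),X_k(t))$ satisfies the linear ODE $\alpha_{jk}'(t) = 2\zeta_i(t)\alpha_{jk}(t)$ with $\alpha_{jk}(0) = \delta_{jk}$, so $\alpha_{jk}(t) = e^{2\int_0^t \zeta_i(v)\,dv}\,\delta_{jk}$. The exponential prefactor in the definition of $F_i$ is tailored exactly to cancel this factor, producing $g(F_{i_j},F_{i_k})(\theta_t(p)) = \delta_{jk}$.

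The one delicate point is that a single scalar $\zeta_i(t)$ must govern the evolution of every $X_j$ with $i_j \in [i]$ simultaneously, which is what allows a single exponential rescaling to restore orthonormality rather than forcing a full matrix correction; this is precisely why the statement groups indices by the equivalence class $[i]$. I do not expect any deeper obstacle beyond this bookkeeping and the mild verification that ${\theta_t}_*$ sends level-set tangent vectors to level-set tangent vectors.
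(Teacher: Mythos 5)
Your proposal is correct, and its overall strategy coincides with the paper's: both compute how the flow $\theta_t$ distorts the Gram matrix of the pushed-forward eigenvectors, find that the distortion is the conformal factor $e^{2\int_0^t\zeta_i}$, and observe that the prefactor in the definition of $F_i$ cancels it. The difference lies in how the ODE for the inner products is obtained. The paper expands $L_{E_1}E_i=\sum_{l\in[i],\,l\neq i}\Gamma_{1i}^lE_l-\zeta_iE_i$ in the frame (using Lemma \ref{77bx01} to kill the terms with $l\notin[i]$), arrives at a \emph{coupled} linear system for the $r^2$ functions $\gamma_{ij}(t)$, exhibits $\delta_{ij}e^{2\int\zeta_i}$ as a solution, and concludes by uniqueness; note that verifying this candidate solution tacitly uses the antisymmetry $\Gamma_{1i}^l=-\Gamma_{1l}^i$, without which the off-diagonal terms would not cancel. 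You instead observe that the derivation of (\ref{lambda06ax}) never used that $E_i$ is a frame vector, only that it is a Ricci eigenvector tangent to the level set, so $\nabla_vE_1=\zeta_iv$ holds for \emph{every} $v$ in the $\lambda_i$-eigenspace; combining this with Lemma \ref{L61} (to keep the pushed-forward vectors in that eigenspace) and the vanishing Lie bracket of flow-invariant fields, you get $\nabla_{E_1}\tilde X_j=\zeta_i\tilde X_j$ directly and the Gram-matrix ODE is diagonal from the outset. This buys a genuinely cleaner argument: no coupled system, no candidate solution to verify, and no hidden appeal to the antisymmetry of the $\Gamma_{1i}^l$; what it costs is the small extra step of checking that the pointwise identity is frame-independent, which you carry out correctly.
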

\begin{proof} Given $q \in \theta( W \times (-\varepsilon, \varepsilon ))$,
for $i, j \in \{i_1, \cdots, i_r \}$,
we define  $\gamma_{ij}(t) =  g  ( {\theta_{t}}_* {E_i}(\theta_{-t} (q))  , {\theta_{t}}_* {E_j}(\theta_{-t} (q)) )$,  for   $t $ such that $\theta_{-t} (q)$ also lies $\theta( W \times (-\varepsilon, \varepsilon )) $.
Then $\gamma_{ij}(0)  =\delta_{ij}$. As in the proof of Lemma \ref{L61},
\begin{eqnarray*}
\gamma_{ij}^{'}(t)  =  -g   ( {\theta_{t}}_* [ L_{E_1 } E_i (\theta_{-t} (q))] , {\theta_{t}}_* {E_j}(\theta_{-t} (q))  )-   g   ( {\theta_{t}}_* {E_i}(\theta_{-t} (q))  , {\theta_{t}}_* [ L_{E_1 } E_j (\theta_{-t} (q))]  )\\
=-  g   ([  \sum_{l \in [i]. l \neq i}   \Gamma_{1i}^l(\theta_{-t} (q)) {\theta_{t}}_*  {E_l }(\theta_{-t} (q))-  \zeta_i(\theta_{-t} (q)) {\theta_{t}}_* {E_i}(\theta_{-t} (q)) ], {\theta_{t}}_* {E_j}(\theta_{-t} (q))  )   \\
-  g   ( {\theta_{t}}_* {E_i}(\theta_{-t} (q))  , [  \sum_{l \in [j]. l \neq j}   \Gamma_{1j}^l(\theta_{-t} (q)) {\theta_{t}}_*  {E_l }(\theta_{-t} (q))-  \zeta_j(\theta_{-t} (q)) {\theta_{t}}_* {E_j}(\theta_{-t} (q)) ]  ).
\end{eqnarray*}
As $\zeta_i = \zeta_j$ and $[i]=[j]$, we can get
\begin{eqnarray}
\label{r5}   \\
\gamma_{ij}^{'}(t)
=  2 \zeta_i(\theta_{-t} (q))\gamma_{ij}(t)- \sum_{l \in [i], l \neq i}   \Gamma_{1i}^l(\theta_{-t} (q)) \gamma_{lj}(t)
-\sum_{l \in [j], l \neq j}   \Gamma_{1j}^l(\theta_{-t} (q)) \gamma_{il}(t). \nonumber
\end{eqnarray}
 We view the above as a system of  first order ODE for  the $r^2 $ functions $\gamma_{ij} (t)$, $i, j \in \{ i_1, \cdots i_r \}$ in the variable $t$.
 One can check that $\tilde{\gamma}_{ij}(t) = \delta_{ij}e^{ \int_{0}^t   2 \zeta_i(\theta_{-u} (q)) du }  $ is a solution  of (\ref{r5}).
As  $\gamma_{ij}(0)  = \tilde{\gamma}_{ij}(0)=\delta_{ij}$,  by  the uniqueness of the solution of the ODE, $\gamma_{ij}(t)=  g   ( {\theta_{t}}_* {E_i}(\theta_{-t} (q))  , {\theta_{t}}_* {E_j}(\theta_{-t} (q))  ) =   \delta_{ij} e^{ \int_{0}^t   2 \zeta_i(\theta_{-u} (q)) du }  $ for any  $t \in (-\varepsilon, \varepsilon )$.
So,  $ e^{ \int_{0}^t  - \zeta_i(\theta_{-u} (q)) du } {\theta_{t}}_* {E_i}(\theta_{-t} (q))$, $i =i_1, \cdots, i_r, $ is an orthonormal basis   for  the $\lambda_i$-eigenspace in $T_qM$. This holds for any point $q$ in  $ \theta(W \times (-\varepsilon, \varepsilon ))$, so we may set   $q= \theta_{t}(p)$ for $p  \in W$.
Then  $e^{ \int_{0}^t -  \zeta_i(\theta_{-u} (\theta_{t} (p))) du }  {\theta_{t}}_* {E_i}(p)$, $i =i_1, \cdots , i_r $ is an orthonormal basis   for  the $\lambda_i$ -eigenspace in $T_{ \theta_{t} (p)} M$.
Now $e^{ \int_{0}^t  - \zeta_i(\theta_{-u} (\theta_{t}(p))) du }   = e^{ \int_{0}^t - \zeta_i( \theta_v(p) ) dv }$ where  $v= t-u$.

So,
 $F_i (\theta_{t} (p)) :=e^{- \int_{0}^t   \zeta_i(\theta_v(p)) dv }  {\theta_{t}}_*( {E_i}(p))$ is  an orthonormal basis  for  the $\lambda_i$-eigenspace in $T_{\theta_{t}(p)}M$.
\end{proof}
By Lemma \ref{L42}, we have a new adapted frame field $F_1:=E_1, F_2, \cdots,   F_n$
on the open subset $\theta(W \times (-\varepsilon, \varepsilon ))$ of $M$.
We compute, for $i >1$,
\begin{eqnarray*}
L_{F_1 }   F_i (\theta_t(p)) =   \lim_{h \rightarrow 0} \frac{1}{h} \{  {\theta_{-h}}_* {F_i}( \theta_{h+t} (p)) -   {F_i}(\theta_t(p))  \} \hspace{3cm} \\
=   \lim_{h \rightarrow 0} \frac{1}{h} \{  {\theta_{-h}}_*  (e^{ \int_{0}^{t+h}  - \zeta_i(\theta_v(p)) dv }  {\theta_{t+h}}_* {E_i}_{p})
-   e^{ \int_{0}^t  - \zeta_i(\theta_v(p)) dv }  {\theta_t}_* {E_i}_{p}\} \\
=   \lim_{h \rightarrow 0} \frac{1}{h} \{  e^{ \int_{0}^{t+h}-   \zeta_i(\theta_v(p)) dv }  {\theta_{t}}_* {E_i}_{p}
-   e^{ \int_{0}^t -  \zeta_i(\theta_v(p)) dv }  {\theta_t}_* {E_i}_{p}\}   \hspace{1.4cm} \\
= \lim_{h \rightarrow 0} \frac{1}{h} \{  e^{ \int_{0}^{t+h}   -\zeta_i(\theta_v(p)) dv }
-   e^{ \int_{0}^t  - \zeta_i(\theta_v(p)) dv }  \}{\theta_{t}}_* {E_i}_{p}  \hspace{2.4cm} \\
=- \zeta_i(\theta_{t} (p)) F_i (\theta_{t} (p)). \hspace{6.7cm}
\end{eqnarray*}

Simply writing, we have
$L_{F_1 }   F_i=- \zeta_i F_i .$
As adapted Ricci-eigen vector fields, $F_i$ satisfy (\ref{lambda06ax}), so
$L_{F_1 }   F_i = \nabla_{F_1 }   F_i  - \nabla_{F_i }   F_1 =  \nabla_{F_1 }   F_i  -\zeta_i F_i $.
By comparison we get
\begin{equation} \label{e07u}
\nabla_{F_1 }   F_i=0 \ \ {\rm for}  \ \  i >1.
\end{equation}
Summarizing above, we have
\begin{prop} \label{pop1}
In a  gradient Ricci soliton $(M, g, f)$ with harmonic Weyl curvature,
suppose that there is an adapted  frame field $E_1,  \cdots,  E_n$  on an open subset $V$  and that the local one-parameter group action $\theta_t (p)  = \theta(p, t )$ associated to $E_1= \frac{\nabla f}{|\nabla f|}$  is defined for  $ ( p, t) \in ( V \cap  f^{-1}(c)) \times(-\varepsilon, \varepsilon ) $ for some real number $c$.

Then there exists a new adapted frame field $F_1=E_1, F_2,  \cdots,  F_n$ on $( V \cap  f^{-1}(c)) \times(-\varepsilon, \varepsilon ) $ satisfying {\rm (\ref{e07u})} defined by
$F_i (\theta_{t} (p)) :=e^{- \int_{0}^t   \zeta_i( \theta_v(p) ) dv }  {\theta_{t}}_* ({E_i}(p))$ for $i>1$.
\end{prop}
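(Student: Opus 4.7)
The plan is to take the vector fields $F_i$ already constructed in Lemma \ref{L42} and simply verify the last assertion, namely $\nabla_{F_1} F_i = 0$. By Lemma \ref{L42}, the fields $F_1 = E_1$ and $F_i(\theta_t(p)) = e^{-\int_0^t \zeta_i(\theta_v(p))\,dv}\,{\theta_t}_*(E_i(p))$ for $i>1$ already form an orthonormal Ricci-eigen basis on $\theta(W \times (-\varepsilon, \varepsilon))$ with each $F_i$ lying in the $\lambda_i$-eigenspace. In particular $\{F_1, \ldots, F_n\}$ is itself an adapted frame field, so formula (\ref{lambda06ax}) applied to it gives $\nabla_{F_i} F_1 = \zeta_i F_i$ for $i \geq 2$, with the same scalar $\zeta_i$ as in the $E$-frame (since $\zeta_i$ is determined by the Ricci eigenvalue $\lambda_i$ and $|\nabla f|$, both intrinsic).

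The heart of the argument is therefore the computation of the Lie derivative $L_{F_1} F_i$. By definition,
\[
L_{F_1} F_i(\theta_t(p)) = \lim_{h\to 0}\frac{1}{h}\bigl\{\,{\theta_{-h}}_*\, F_i(\theta_{t+h}(p)) - F_i(\theta_t(p))\,\bigr\}.
\]
Substituting the defining formula for $F_i$, I use the group property ${\theta_{-h}}_* {\theta_{t+h}}_* = {\theta_t}_*$ to see that the pushforward of $E_i(p)$ factors out identically, leaving only the scalar difference
\[
\lim_{h\to 0}\frac{1}{h}\Bigl\{e^{-\int_0^{t+h}\zeta_i(\theta_v(p))\,dv} - e^{-\int_0^{t}\zeta_i(\theta_v(p))\,dv}\Bigr\}\,{\theta_t}_* E_i(p).
\]
Fundamental theorem of calculus gives the scalar limit $-\zeta_i(\theta_t(p))\,e^{-\int_0^t \zeta_i\,dv}$, and the remaining $e^{-\int_0^t \zeta_i\,dv}\,{\theta_t}_* E_i(p)$ reassembles as $F_i(\theta_t(p))$. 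Hence $L_{F_1} F_i = -\zeta_i F_i$.

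Combining $L_{F_1} F_i = \nabla_{F_1} F_i - \nabla_{F_i} F_1$ with $\nabla_{F_i} F_1 = \zeta_i F_i$ immediately yields $\nabla_{F_1} F_i = 0$ for $i>1$, which is (\ref{e07u}) and completes the proposition. The main (very mild) obstacle is purely bookkeeping: one has to carefully invoke the group law ${\theta_{-h}}_* {\theta_{t+h}}_* = {\theta_t}_*$ so that the pushforward of $E_i(p)$ pulls out of the $h \to 0$ limit as a common factor; once this is in place the proof reduces to differentiating a scalar integral. No further analytic input is needed beyond Lemmas \ref{L61} and \ref{L42}, which were the true content of this section.
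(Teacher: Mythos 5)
Your argument is correct and coincides with the paper's own proof: both rely on Lemma \ref{L42} to know the $F_i$ form an adapted frame (hence $\nabla_{F_i}F_1=\zeta_i F_i$), then compute $L_{F_1}F_i=-\zeta_i F_i$ via the group law ${\theta_{-h}}_*{\theta_{t+h}}_*={\theta_t}_*$ and the derivative of the scalar exponential factor, and conclude $\nabla_{F_1}F_i=0$ by comparison. No differences of substance.
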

\medskip
As $F_i (p) = E_i(p)$, it holds that on $( V \cap  f^{-1}(c)) \times(-\varepsilon, \varepsilon ) $,
\begin{equation} \label{e07u77}
F_i (\theta_{t} (p)) =e^{- \int_{0}^t   \zeta_i( \theta_v(p) ) dv }  {\theta_{t}}_* ({F_i}(p)).
\end{equation}
Note that without any previously given $E_i$,
we may define $F_i$  by (\ref{e07u77}), given $F_i(p)$ for $p \in  V \cap  f^{-1}(c)$ because $\zeta_i$ is  independent  on the choice of $E_i$ from (\ref{lambda06ax}).

The frame field $\{F_i\}$  in Proposition \ref{pop1}  shall be called a {\it refined} adapted frame field.
For any point $p_0$ in $M_{r} \cap \{ \nabla f \neq 0  \}$, there exists a neighborhood of $p_0$ equipped with a  refined adapted frame field.

\section{Computing $\Gamma_{ij}^k $ in a refined adapted frame field $\{ F_i\}$}

In this section we compute  $\Gamma_{ij}^k= g( \nabla_{F_i} F_j, F_k) $. We assume  $\zeta_i \neq 0$ for any $i >1$, so that  (\ref{x3j0d}) holds.
We need to describe better  the vector fields $F_i$ in Proposition \ref{pop1}.
 Let $p \in   V \cap  f^{-1}(c) $.
We use  the coordinates $(s, x):=(x_1 =s, x_2, \cdots ,  x_n )$ of Lemma  \ref{threesolbx} (v).
The function $s$ is defined modulo a constant, so we set $s(p)  = 0 $ for   $p \in V \cap  f^{-1}(c)$.
Recall   $s( \theta_t  (p)) = t$.
For $i \geq 2$,  $x_i (  \theta_t (p) ) = x_i (   p )$ because
$ \frac{d}{ dt } \{ x_i (  \theta_t (p) ) \} = dx_i  (  \frac{d}{ dt }\theta_t (p)  ) =dx_i  (  E_1  )   =dx_i  ( \frac{\partial }{\partial x_1}  ) =0  $.   For $k \geq 2$ and $i \geq 1$, ${\theta_t}_* ( \frac{\partial}{\partial x_k}  ) x_i= ( \frac{\partial}{\partial x_k}  ) (x_i \circ{\theta_t})  = ( \frac{\partial}{\partial x_k}  ) (x_i )  = \delta_{ik}$, so
we get ${\theta_t}_* ( \frac{\partial}{\partial x_k}  )  =  \frac{\partial}{\partial x_k} $.

For $i \geq 2$, $F_i$ is tangent to  the level surfaces of $f$, so
we may write $F_i = \sum_{l=2}^n a_{il} (s, x)  \frac{\partial}{\partial x_l}$ for some function $  a_{il}$ of $(s, x)$. Then $F_i(p) = \sum_{l=2}^n a_{il} (0, x(p))  \frac{\partial}{\partial x_l}$.
From above we get
$F_i (\theta_t (p)) = e^{ \int_{0}^t -  \zeta_i(v) dv }  {\theta_t}_* {F_i}(p) =  e^{ \int_{0}^t   -\zeta_i(v) dv }   \sum_{l=2}^n a_{il} (0, x(p))  \frac{\partial}{\partial x_l}$. This can be rewritten in the coordinates $(s, x)$ as
\begin{equation} \label{e07}
F_i (s, x) =  e^{ \int_{0}^s   -\zeta_i(v) dv }  e_i(x).
\end{equation}
where $e_i (x) = \sum_{l=2}^n a_{il} (0, x)  \frac{\partial}{\partial x_l}$, which form a basis for the tangent space to the level surface of $f$ at the point with coordinates $(s, x)$.

As
$F_i (s, x) =  e^{ \int_{0}^s  - \zeta_i(v) dv }   e_i ( x)$,
we have
$[F_i,  F_j](s,x) = e^{ \int_0^s -(\zeta_i +\zeta_j ) (v) dv  } [e_i, e_j]      $.
Here $ [e_i, e_j]$ is tangent to  the level surfaces of $f$. We may write
$[e_i, e_j] ( x) = \sum_{l=2}^n \tilde{\gamma}_{ij}^l (x)   e_l  = \sum_{l=2}^n \tilde{\gamma}_{ij}^l ( x) e^{ \int_{0}^s   \zeta_l(v) dv }  F_l$ for a function $\tilde{\gamma}_{ij}^l (x)$ depending only on $x$. For $i, j, k \geq 2$,  we get
\begin{equation} \label{e01}
 g([F_i,  F_j] , F_k    )(s, x)
=e^{ \int_0^s -(\zeta_i +\zeta_j - \zeta_k )  (v) dv  } \cdot \tilde{\gamma}_{ij}^k ( x). \end{equation}
From  \cite[p.2]{CE}, we have
\begin{equation}\label{e02}
2g(\nabla_{F_i}F_j , F_k)= g([F_i,  F_j] , F_k    ) - g([F_i,  F_k] , F_j    ) - g([F_j,  F_k] , F_i    ). \end{equation}

We shall use the equivalence class $[\cdot ]$ in (\ref{x3j0dk}).
If $[i]=[j]=[k]$,   from (\ref{e01}) and (\ref{e02}) we get $\Gamma_{ij}^k (s,x) =e^{\int_0^s -\zeta_i   (v)dv   } {\hat\gamma}_{ij}^k ( x) $, where  $\hat{\gamma}_{ij}^k (x)= \frac{1}{2} ( \tilde{\gamma}_{ij}^k ( x)  -  \tilde{\gamma}_{ik}^j ( x)  -  \tilde{\gamma}_{jk}^i  ( x)  ) $.

For $[i] \neq [2]$,  from (\ref{x3j0d}),
$e^{\int_0^s -\zeta_i   (v)dv   } = e^{-  \int_0^s (\frac{u_2^{'} }{u_2  } +  \frac{h^{'}}{ c_i + h })   (v)dv   }  =\frac{ | u_2(0 ) \cdot(c_i +h(0 ))   | } {|u_2(s) \cdot (c_i +h(s)) |}  $.  As $u_2(s) \cdot (c_i +h(s)) $ is nowhere zero and we may assume that $s$ is in an interval containing $0$, we can get $e^{\int_0^s -\zeta_i   (v)dv   }   =\frac{ u_2(0 ) \cdot(c_i +h(0 ))    } {u_2(s) \cdot (c_i +h(s)) }  $,
 so that
$\Gamma_{ij}^k =\frac{{\gamma}_{ij}^k (x)  }{ u_2(s)  (c_i +h(s)) }  $ if  $[i]=[j]=[k] \neq [2]$, where ${\gamma}_{ij}^k:=u_2 (0 )(c_i +h(0)) \hat{\gamma}_{ij}^k $.
When $[i]=[j]=[k]= [2]$, we can get $\Gamma_{ij}^k =\frac{{\gamma}_{ij}^k }{u_2 (s)}  $ similarly with   ${\gamma}_{ij}^k:=u_2 (0 ) \hat{\gamma}_{ij}^k $. We have got

\begin{lemma} \label{L10}
If $[i]=[j]=[k]$, then  $\Gamma_{ij}^k (s, x) = \frac{ {\gamma}_{ij}^k(x)}{u_2(s)( h(s)+ c_i ) }$ for $i \notin [2]$ and $\Gamma_{ij}^k = \frac{   {\gamma}_{ij}^k(x)}{u_2(s) }$ for $i \in [2]$.
\end{lemma}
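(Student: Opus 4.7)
The plan is to apply the Koszul formula (\ref{e02}) term-by-term using the product structure (\ref{e07}) of the refined frame, namely $F_i(s,x) = e^{-\int_0^s \zeta_i(v)\,dv}\, e_i(x)$, and then specialize to the case $[i]=[j]=[k]$.

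First I would compute each Lie bracket $[F_i,F_j]$ from (\ref{e07}). Since the exponential prefactors depend only on $s$ while each $e_i$ is tangent to the level surfaces of $f$ (so has no $\partial/\partial s$-component), the $s$-derivative contributions from the exponentials cancel pairwise, and what survives is $[F_i,F_j](s,x) = e^{-\int_0^s(\zeta_i+\zeta_j)(v)\,dv}\,[e_i,e_j](x)$. Writing $[e_i,e_j](x) = \sum_l \tilde{\gamma}_{ij}^l(x)\,e_l$ and re-expressing the $e_l$ in terms of the $F_l$ then yields (\ref{e01}), from which formula (\ref{e02}) gives, under the hypothesis $[i]=[j]=[k]$ (so $\zeta_i=\zeta_j=\zeta_k$), the identity $\Gamma_{ij}^k(s,x) = e^{-\int_0^s \zeta_i(v)\,dv}\,\hat{\gamma}_{ij}^k(x)$ where $\hat{\gamma}_{ij}^k(x) = \tfrac{1}{2}\bigl(\tilde{\gamma}_{ij}^k(x) - \tilde{\gamma}_{ik}^j(x) - \tilde{\gamma}_{jk}^i(x)\bigr)$. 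The key point is that with all three indices in the same equivalence class the three exponential prefactors coming from the three Lie brackets in (\ref{e02}) collapse to a single common factor, allowing the $x$-dependent piece to be packaged cleanly.

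Second I would evaluate $e^{-\int_0^s \zeta_i(v)\,dv}$ explicitly using (\ref{x3j0d}). For $i\in[2]$, $\zeta_i = u_2'/u_2$ integrates to $\ln(u_2(s)/u_2(0))$, giving the factor $u_2(0)/u_2(s)$. For $i\notin[2]$, the extra summand $h'/(c_i+h)$ integrates to $\ln((c_i+h(s))/(c_i+h(0)))$; since $u_2$ and $c_i+h$ are nowhere zero on a connected $s$-interval containing $0$, I may drop absolute values and obtain the factor $u_2(0)(c_i+h(0))/[u_2(s)(c_i+h(s))]$. Absorbing the $s$-independent constants $u_2(0)$ or $u_2(0)(c_i+h(0))$ into the new $x$-function $\gamma_{ij}^k(x)$ defined exactly as in the sentence preceding the lemma completes both cases.

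The calculation is essentially routine once (\ref{e07}), (\ref{e01}), (\ref{e02}) and (\ref{x3j0d}) are in hand, so the main obstacle is simply careful bookkeeping: verifying that $[e_i,e_j]$ genuinely has no $\partial/\partial s$-component (so that the claimed factorization of $[F_i,F_j]$ is correct), checking signs in the Koszul formula so that the common exponential factors out of all three summands, and correctly absorbing the initial-value constants $u_2(0)$ and $c_i+h(0)$ into $\gamma_{ij}^k(x)$ to match the stated normalization.
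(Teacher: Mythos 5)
Your proposal is correct and follows essentially the same route as the paper: compute $[F_i,F_j]=e^{-\int_0^s(\zeta_i+\zeta_j)dv}[e_i,e_j]$ from the product form (\ref{e07}), feed this into (\ref{e02}) so that the three exponentials collapse to a single factor $e^{-\int_0^s\zeta_i dv}$ when $[i]=[j]=[k]$, and then integrate (\ref{x3j0d}) to identify that factor as $u_2(0)(c_i+h(0))/[u_2(s)(c_i+h(s))]$ or $u_2(0)/u_2(s)$. The only cosmetic quibble is that the cross terms in the bracket do not "cancel pairwise" but rather each vanishes individually, since $e_i(b)=0$ for any function $b$ of $s$ alone.
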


\bigskip
Let $i, j$ be an integer $2 \leq i, j \leq n$ with $[i] \neq [j]$.
We use Lemma \ref{abc60byx} and  (\ref{e07u})  to compute
the following Jacobi identity;
\begin{eqnarray*}
0= [[F_i, F_j], F_1] + [[F_j, F_1], F_i] +  [[F_1, F_i], F_j] \hspace{3.1cm}  \\
= [\sum_{k >1}^n (\Gamma_{ij}^k -\Gamma_{ji}^k ) F_k, F_1] + [\zeta_j F_j , F_i] -  [\zeta_i F_i , F_j] \hspace{2.6cm}\\
= \sum_{k >1}^n (\Gamma_{ij}^k -\Gamma_{ji}^k )[ F_k, F_1] - \sum_{k >1}^n F_1 (\Gamma_{ij}^k -\Gamma_{ji}^k ) F_k  -(\zeta_i + \zeta_j )[ F_i , F_j] \\
= \sum_{k >1}^n (\Gamma_{ij}^k -\Gamma_{ji}^k )(\zeta_k    -    \zeta_i - \zeta_j      ) F_k  - \sum_{k >1}^n F_1 (\Gamma_{ij}^k -\Gamma_{ji}^k ) F_k .\hspace{1.4cm}
\end{eqnarray*}
From this we obtain, for $2 \leq i, j, k \leq n$  with $[i] \neq [j]$,
\begin{equation} \label{eq5} F_1 (\Gamma_{ij}^k -\Gamma_{ji}^k )  =  (\zeta_k   -\zeta_i - \zeta_j )( \Gamma_{ij}^k -\Gamma_{ji}^k)  .
\end{equation}

We shall prove

\begin{lemma}\label{L11}
Let $[i], [j], [k]$ be pairwise different. Then the following holds.

{\rm  (i)} $\Gamma_{ij}^k = \frac{  {\gamma}_{ij}^k(x)}{u_2( h+ c_i ) }$ if  $[i], [j], [k]$ are all different from  $ [2]$.

{\rm  (ii)} $\Gamma_{ij}^k = \frac{   {\gamma}_{ij}^k(x)}{u_2 }$ for $i \in [2]$,

{\rm  (iii)}  $ \Gamma_{ij}^k=\frac{\gamma_{ij}^k(x)  }{ u_2 (h+ c_i)}   $ for $j \in [2]$ or  $k \in [2]$.

\end{lemma}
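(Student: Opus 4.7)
The plan is to obtain two independent linear equations in the pair $(\Gamma_{ij}^k, \Gamma_{ji}^k)$, solve, and then substitute the explicit form of $\zeta_i$ from (\ref{x3j0d}). The first equation comes from $\Gamma_{ij}^k - \Gamma_{ji}^k = g([F_i,F_j], F_k)$, which by (\ref{e01}) equals $e^{-\int_0^s (\zeta_i+\zeta_j-\zeta_k)(v)\,dv}\, \tilde\gamma_{ij}^k(x)$. The second comes from Lemma \ref{abc60x}(ii): since $[i], [j], [k]$ are pairwise distinct, so are $\lambda_i, \lambda_j, \lambda_k$, and that lemma gives $(\lambda_j-\lambda_k)\Gamma_{ij}^k = (\lambda_i-\lambda_k)\Gamma_{ji}^k$. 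Using (\ref{x1z}) to rewrite $\lambda_a - \lambda_b = (\zeta_b - \zeta_a) f'$ and cancelling the nowhere-vanishing $f'$ produces $(\zeta_k - \zeta_j)\Gamma_{ij}^k = (\zeta_k - \zeta_i)\Gamma_{ji}^k$.

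Solving the two relations (with $\zeta_j \neq \zeta_i$ since $[i] \neq [j]$) yields
\[
\Gamma_{ij}^k \;=\; \frac{\zeta_k - \zeta_i}{\zeta_j - \zeta_i}\, e^{-\int_0^s (\zeta_i+\zeta_j-\zeta_k)(v)\,dv}\, \tilde\gamma_{ij}^k(x).
\]
I would then substitute (\ref{x3j0d}) and evaluate using $e^{-\int_0^s(u_2'/u_2)\,dv} = u_2(0)/u_2(s)$ and $e^{-\int_0^s h'/(c_\bullet+h)\,dv} = (c_\bullet+h(0))/(c_\bullet+h(s))$, as was done in the proof of Lemma \ref{L10}.

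In case (i), each $\zeta$ carries both the $u_2'/u_2$ and the $h'/(c_\bullet+h)$ pieces; the prefactor rewrites as $\frac{(c_i-c_k)(c_j+h(s))}{(c_i-c_j)(c_k+h(s))}$, and the $(c_j+h(s))$ and $(c_k+h(s))$ factors telescope with those coming out of the exponential to leave precisely $1/(u_2(s)(c_i+h(s)))$ as the $s$-dependence. In case (ii), $\zeta_i = u_2'/u_2$, so no $h$-factor involving $c_i$ appears, the remaining $(c_j+h(s))$ and $(c_k+h(s))$ factors cancel completely, and one is left with $1/u_2(s)$. In case (iii), whether $j \in [2]$ or $k \in [2]$, an analogous cancellation leaves $1/(u_2(s)(c_i+h(s)))$. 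Absorbing all $s=0$ constants into a function $\gamma_{ij}^k(x)$ gives the stated formulas. The main obstacle is the algebraic bookkeeping across the three cases, since which $\zeta$ has the simpler form changes the structure of both the prefactor and the exponential in parallel; but no new geometric input is needed beyond the two relations above.
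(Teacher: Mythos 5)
Your proof is correct and follows essentially the same route as the paper: both rest on the Codazzi relation $(\zeta_k-\zeta_j)\Gamma_{ij}^k=(\zeta_k-\zeta_i)\Gamma_{ji}^k$ from Lemma \ref{abc60x}(ii) together with the known $s$-dependence of the antisymmetric part $\Gamma_{ij}^k-\Gamma_{ji}^k$. The only (cosmetic) difference is that you take that $s$-dependence directly from the integrated formula (\ref{e01}) and solve a linear system, whereas the paper uses its differential form (\ref{eq5}) and integrates $F_1\ln|\Gamma_{ij}^k|=-\zeta_i$; your cancellation claims in all three cases check out.
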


\begin{proof}
If $[i], [j], [k]$ are pairwise different,
Lemma \ref{abc60x} (ii) and (\ref{lambda06ax})  give $ \Gamma_{ji}^k = \frac{\zeta_j -  \zeta_k }{\zeta_i - \zeta_k     } \Gamma_{ij}^k$.

To prove (i);  if $[i], [j], [k], [2]$ are pairwise different, then  from (\ref{x3j0d}),
 $\frac{\zeta_i -  \zeta_j }{\zeta_i - \zeta_k     } = \frac{c_i -  c_j }{c_i - c_k     }  \frac{c_k + h }{c_j +h  }  $.
So,
$ \Gamma_{ij}^k -  \Gamma_{ji}^k  = \frac{\zeta_i -  \zeta_j }{\zeta_i - \zeta_k     } \Gamma_{ij}^k
  =  \frac{c_i -  c_j }{c_i - c_k     }  \frac{c_k + h }{c_j +h  }  \Gamma_{ij}^k
 $.

$ F_1 (\ln|\Gamma_{ij}^k -\Gamma_{ji}^k| )=   F_1 (\ln| \frac{c_i -  c_j }{c_i - c_k     }  \frac{c_k + h }{c_j +h  }  \Gamma_{ij}^k| ) =   F_1 (\ln|  h   + c_k|-  \ln| h   + c_j| + \ln |\Gamma_{ij}^k| )\\
= \zeta_k -  \zeta_j + F_1 \ln |\Gamma_{ij}^k|
  $.

Meanwhile, from (\ref{eq5}),
$ F_1 (\ln|\Gamma_{ij}^k -\Gamma_{ji}^k| ) = \zeta_k   -\zeta_i - \zeta_j
  $. By comparison we obtain
 $F_1 \ln |\Gamma_{ij}^k| =\frac{\partial }{\partial s} \ln |\Gamma_{ij}^k| = - \zeta_i =   -\frac{u_2^{'} }{u_2  }-\frac{h^{'}}{ h  + c_i}  $. Integrating this, we get
$\Gamma_{ij}^k =  \frac{  {\gamma}_{ij}^k(x) }{ u_2(h+ c_i)}, $ similarly as in the proof of Lemma \ref{L10}.

\medskip
To prove (ii);  if  $[i]=[2], [j], [k]$ are pairwise different, then
 $\frac{\zeta_2 -  \zeta_j }{\zeta_2 - \zeta_k     } =  \frac{c_k + h }{c_j +h  }  $. Following the computation in case (i), we  get $\Gamma_{ij}^k =  \frac{  {\gamma}_{ij}^k(x) }{ u_2}. $

To prove (iii);  if $[i], [j]=[2], [k]$ are pairwise different, then
 $\frac{\zeta_i -  \zeta_2 }{\zeta_i - \zeta_k     } =  \frac{c_k + h }{c_k - c_i  }  $.

$ F_1 (\ln|\Gamma_{ij}^k -\Gamma_{ji}^k| )=   F_1 (\ln|   \frac{c_k + h }{c_k - c_i }  \Gamma_{ij}^k| ) =   F_1 (\ln|  h   + c_k| + \ln |\Gamma_{ij}^k| )\\
= \zeta_k - \zeta_2+ F_1 \ln |\Gamma_{ij}^k|.
  $ From (\ref{eq5}),
$ F_1 \ln |\Gamma_{ij}^k| = -\zeta_i
=- (\ln | u_2 (h+ c_i)  |)^{'}  $.

We get $ \Gamma_{ij}^k
=\frac{\gamma_{ij}^k(x)  }{ u_2 (h+ c_i)}   $. As $ \Gamma_{ij}^k = -\Gamma_{ik}^j  $, we get (iii).


\end{proof}
Now suppose $[i]\neq[j]= [k]$.
From (\ref{y03}) and (\ref{eq5}), we get   $F_1 (\Gamma_{ij}^k  )  =     -\zeta_i ( \Gamma_{ij}^k )$. (\ref{x3j0d}) gives the following

\begin{lemma} \label{L12}
Suppose $[i]\neq[j]= [k]$. Then we have


 $\Gamma_{ij}^k = \frac{ {\gamma}_{ij}^k(x)}{ u_2(h+ c_i ) }$ if $ [i] \neq [2]$,  and $\Gamma_{ij}^k =\frac{ {\gamma}_{ij}^k(x)}{u_2 }$ if $ [i] = [2]$.
\end{lemma}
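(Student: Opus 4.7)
The plan is to mimic the derivation of Lemma \ref{L11} but to exploit the extra vanishing afforded by (\ref{y03}) in the degenerate case $[j]=[k]$. First I would establish that $\Gamma_{ji}^k=0$. Since $[j]=[k]$ and $[i]\neq[j]$, the Ricci eigenvalues satisfy $\zeta_j=\zeta_k\neq \zeta_i$, so (\ref{y03}) applied to the triple $(j,k,i)$ yields $\Gamma_{jk}^i=0$. The orthonormality of $\{F_l\}$ gives the skew-symmetry $\Gamma_{ab}^c=-\Gamma_{ac}^b$ in the lower indices, so $\Gamma_{ji}^k=-\Gamma_{jk}^i=0$.

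Next I would feed this into (\ref{eq5}). Because $\Gamma_{ji}^k=0$ and $\zeta_k=\zeta_j$, the identity (\ref{eq5}) reduces to
\[
F_1(\Gamma_{ij}^k)=-\zeta_i\,\Gamma_{ij}^k,
\]
exactly the claim made in the sentence preceding the lemma. Since $F_1=\partial/\partial s$ and $\zeta_i$ depends only on $s$ by Lemma \ref{abc60byx}, this is a first-order linear ODE in $s$ at each fixed $x$, with solution
\[
\Gamma_{ij}^k(s,x)=C(x)\exp\!\Big(-\!\int_0^s \zeta_i(v)\,dv\Big).
\]

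Finally I would evaluate the exponential by plugging in the explicit form of $\zeta_i$ from (\ref{x3j0d}), just as was done for Lemma \ref{L10} and Lemma \ref{L11}. If $[i]\neq[2]$, then $\zeta_i=u_2'/u_2+h'/(c_i+h)$, so the exponential equals $\dfrac{u_2(0)(c_i+h(0))}{u_2(s)(c_i+h(s))}$, producing $\Gamma_{ij}^k=\gamma_{ij}^k(x)/[u_2(h+c_i)]$ once the $s=0$ constants are absorbed into $\gamma_{ij}^k$. If $[i]=[2]$, then $\zeta_i=u_2'/u_2$, and the same integration yields $\Gamma_{ij}^k=\gamma_{ij}^k(x)/u_2$.

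No substantive obstacle is anticipated; the only care needed is bookkeeping on the equivalence classes $[i],[j],[k]$ and on which clause of (\ref{x3j0d}) applies, together with the correct use of the orthonormality skew-symmetry in the lower indices of $\Gamma_{ab}^c$ (as opposed to the generically asymmetric behaviour in the outer slot).
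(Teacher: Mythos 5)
Your argument is correct and is essentially the paper's own (very tersely stated) proof: kill $\Gamma_{ji}^k$ via (\ref{y03}) applied to the triple with $\zeta_j=\zeta_k\neq\zeta_i$ together with skew-symmetry in the last two slots, reduce (\ref{eq5}) to $F_1(\Gamma_{ij}^k)=-\zeta_i\Gamma_{ij}^k$, and integrate using (\ref{x3j0d}) exactly as in Lemmas \ref{L10} and \ref{L11}. No gaps.
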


\section{Analysis of the Ricci tensor and the gradient Ricci soliton equation}

In this section we study   the Ricci tensor  when a GRS with harmonic Weyl curvature has at least two distinct $\lambda_i$'s among $i \geq 2$ in a refined adapted frame field $F_i$.  We still assume  $\zeta_i \neq 0$ for any $i >1$.
Set
\begin{eqnarray} \label{u1}
X_i = \sum_{1<j\leq n, j \neq i} \{   F_i \Gamma_{jj}^i  + F_j \Gamma_{ii}^j -(\Gamma_{ii}^j)^2  -(\Gamma_{jj}^i)^2 \} \hspace{1.7cm}\\ \ \ +  \sum_{1<j\leq n, j \neq i} \sum_{k \neq 1, i, j } \{ - \Gamma_{jj}^k \Gamma_{ii}^k
   -\Gamma_{ij}^k\Gamma_{jk}^i  -( \Gamma_{ij}^k-\Gamma_{ji}^k) \Gamma_{kj}^i \} \nonumber
   \end{eqnarray}
so that the formula of $R_{ii}$ in $F_i$ in Lemma \ref{77bx01} becomes
 \begin{eqnarray}
 R_{ii}  = -\zeta_i^{'}   -\zeta_i^2   -\zeta_i (\sum_{1<j\leq n, j \neq i} \zeta_j) +X_i. \label{rii}
\end{eqnarray}
Then $X_i$ is independent of the choice of $i$ in $[i]$.
We also recall $ R_{ii} = \lambda - \zeta_i f^{'}  $.
For $i, s$ with $[i] \neq [s],$  by (\ref{x2}),
$ (\zeta_s -\zeta_i)f^{'}= R_{ii}  -   R_{ss}= \zeta_i^2 - \zeta_s^2  +  (\zeta_s -\zeta_i) (\sum_{1<j\leq n}   \zeta_j )     +X_i - X_s.$ So,
 \begin{eqnarray}
f^{'}=- (\zeta_i+\zeta_s) +   \sum_{1<j\leq n} \zeta_j     + \frac{X_i - X_s }{ (\zeta_s -\zeta_i) },   \ \ \ \     {\rm for } \ \ [i] \neq [s]. \label{riis}
\end{eqnarray}
We set $Z_{is}: =- (\zeta_i+\zeta_s) +   \sum_{1<j\leq n} \zeta_j     + \frac{X_i - X_s }{ (\zeta_s -\zeta_i) } $, which should be independent of the classes $[i], [s]$ as  the LHS of (\ref{riis}) is.

We shall treat the  terms of $X_i$ in (\ref{u1}).
For  $1< i \notin [2]$,
from  (\ref{x3j0d}) and (\ref{e07}), we get
$F_i  = \frac{  u_2(0 ) \cdot(c_i +h(0 ))    } {u_2(s) \cdot (c_i +h(s)) }  e_i $. Set $a_i = u_2(0 ) \cdot(c_i +h(0 ))   $ which is independent of $i \in [i]$.  So,
we write
$F_i = \frac{a_i }{u_2(h+ c_i)}e_i$.
 For  $i \in [2]$,  similarly  $F_i= \frac{a_2 }{u_2}e_i$  for a constant $a_2$ which is independent of $i$.  We write;
  \begin{eqnarray} \label{x3j0d5}
 \begin{cases}
 \  F_i = \frac{a_i }{u_2(h+ c_i)}e_i  \ \  \ {\rm when}  \ \ 1< i \notin [2],\\
 \   F_i= \frac{a_2 }{u_2}e_i  \ \  \ \ \ \  \ \ {\rm when}  \ \ [i]= [2].
 \end{cases}
\end{eqnarray}

 Using $h^{'}  = \frac{1}{u_2^2}$, (\ref{y03}), (\ref{x3j0d5}) and  Lemma \ref{L10}, we get for  $1< i \notin [2]$
 \begin{eqnarray} \label{aq01}
\sum_{1<j\leq n, j \neq i} \{  F_i \Gamma_{jj}^i  + F_j \Gamma_{ii}^j -(\Gamma_{ii}^j)^2  -(\Gamma_{jj}^i)^2 \}\hspace{2cm}  \\
 =\frac{h^{'}}{(h+ c_i)^2} \sum_{j \in [i], j \neq i} \{ a_i  e_i {\gamma}_{jj}^i  - a_j e_j {\gamma}_{ii}^j -  ({\gamma}_{ii}^j)^2-  ({\gamma}_{jj}^i)^2\}, \hspace{0cm} \nonumber
\end{eqnarray}
and for $i \in [2]$,
 \begin{eqnarray} \label{aq01b}
\sum_{1<j\leq n, j \neq i} \{  F_i \Gamma_{jj}^i  + F_j \Gamma_{ii}^j -(\Gamma_{ii}^j)^2  -(\Gamma_{jj}^i)^2 \}\hspace{2cm}  \\
 =h^{'}  \sum_{j \in [i], j \neq i} \{  a_2e_i {\gamma}_{jj}^i  -  a_2e_j {\gamma}_{ii}^j -  ({\gamma}_{ii}^j)^2-  ({\gamma}_{jj}^i)^2\} . \hspace{0cm} \nonumber
\end{eqnarray}

For  $1< i \notin [2]$, using (\ref{y03}) and  Lemma \ref{L10} again,
 \begin{eqnarray}\label{aq02}
\sum_{1<j\leq n, j \neq i}  \{ \sum_{k \neq 1, i, j } - \Gamma_{jj}^k \Gamma_{ii}^k \}=
\frac{ -h^{'}}{(h+ c_i)^2}  \sum_{j \in [i], j \neq i} \{\sum_{k \neq 1, i, j , k \in [i]}  {\gamma}_{jj}^k{\gamma}_{ii}^k\},
\end{eqnarray}
and for $i \in [2]$,
 \begin{eqnarray}\label{aq02b}
\sum_{1<j\leq n, j \neq i} \{ \sum_{k \neq 1, i, j } - \Gamma_{jj}^k \Gamma_{ii}^k \}=  -h^{'} \sum_{j \in [i], j \neq i} \{\sum_{k \neq 1, i, j , k \in [i]}{\gamma}_{jj}^k {\gamma}_{ii}^k\}.
\end{eqnarray}
Set $\alpha_{ij}^k := -\Gamma_{ij}^k\Gamma_{jk}^i  -( \Gamma_{ij}^k-\Gamma_{ji}^k) \Gamma_{kj}^i$.
When $[i], [j], [k]$ are pairwise different,  $\alpha_{ij}^k  =2\Gamma_{ji}^k \Gamma_{ij}^k  $  from Lemma \ref{abc60x} (ii) and (\ref{x1z}). When exactly two of $[i], [j], [k]$ are equal,  $\alpha_{ij}^k=0$
from  (\ref{y03}).  Using these and Lemma \ref{L10} and Lemma \ref{L11}, we compute
for $1< i \notin [2]$,
 \begin{eqnarray}\label{aq03}
\ \ \sum_{1<j \leq n, j \neq i}   \sum_{k \neq 1, i, j } \alpha_{ij}^k \hspace{7cm} \\
  = \sum_{j \in [i], j \neq i} \{\sum_{k \neq 1, i, j, k \in [i]}\alpha_{ij}^k \}
   +\sum_{1<j \notin [i]}^n \{\sum_{k \neq 1, k \notin [i],  k \notin [j] }
   2\Gamma_{ji}^k \Gamma_{ij}^k \}  \nonumber  \hspace{2cm} \\
     = h^{'} \sum_{j \in [i], j \neq i} \{\sum_{k \neq 1, i, j, k \in [i]}
   \frac{-{\gamma}_{ij}^k  {\gamma}_{jk}^i -{\gamma}_{ij}^k{\gamma}_{kj}^i +{\gamma}_{ji}^k{\gamma}_{kj}^i }{(h+ c_i)^2}  \}  \nonumber  \hspace{3cm} \\
   +h^{'}\sum_{j \in [2]} \{\sum_{k \neq 1, k \notin [i],  k \notin [j] }
 2  \frac{ {\gamma}_{ji}^k(x){\gamma}_{ij}^k(x)}{ h+ c_i  } \}
   +h^{'}\sum_{1<j \notin [i], [2]}^n \{\sum_{k \neq 1, k \notin [i],  k \notin [j] }
   2 \frac{ {\gamma}_{ji}^k(x)}{ h+ c_j  } \frac{ {\gamma}_{ij}^k(x)}{ h+ c_i  } \}. \nonumber  \hspace{0cm}
\end{eqnarray}

 \bigskip
For $i \in [2]$,
 \begin{eqnarray}\label{aq03b}
\ \ \ \ \  \sum_{1<j \leq n, j \neq i}   \sum_{k \neq 1, i, j }\alpha_{ij}^k   \hspace{7cm} \\
  = \sum_{j \in [i], j \neq i} \{\sum_{k \neq 1, i, j, k \in [i]} \alpha_{ij}^k \}
   +\sum_{1<j \notin [i]}^n \{\sum_{k \neq 1, k \notin [i],  k \notin [j] }
   2\Gamma_{ji}^k \Gamma_{ij}^k \}   \nonumber  \hspace{0cm} \\
     = h^{'} \sum_{j \in [2], j \neq i} \{\sum_{k \neq 1, i, j, k \in [2]}
   (-{\gamma}_{ij}^k  {\gamma}_{jk}^i -{\gamma}_{ij}^k{\gamma}_{kj}^i +{\gamma}_{ji}^k{\gamma}_{kj}^i ) \}  \nonumber  \hspace{0.7cm} \\
   +h^{'}\sum_{1<j \notin  [2]}^n \{\sum_{k \neq 1, k \notin [2],  k \notin [j] }
   2 \frac{ {\gamma}_{ij}^k(x) {\gamma}_{ji}^k(x)}{ h+ c_j  }  \}
   . \nonumber  \hspace{2cm}
\end{eqnarray}



\bigskip
For  $1< i \notin [2]$, from  (\ref{u1}), (\ref{rii}), (\ref{aq01}),   (\ref{aq02}), (\ref{aq03}) and $\zeta_i^{'}  + \zeta_i^2 = \frac{u_2^{''}}{u_2}$,
the formula $ \lambda - \zeta_i f^{'} = R_{ii} $  gives
 \begin{eqnarray} \label{aq7}
\lambda - \{ \frac{u_2^{'} }{u_2  } +  \frac{h'}{ (c_i +h) }\} f^{'}
   = -\frac{u_2^{''} }{u_2 }  \hspace{5cm}   \\
  - \sum_{1<j \notin [2], j \neq i}( \frac{u_2^{'} }{u_2  } +  \frac{h^{'}}{ c_i +h })  ( \frac{u_2^{'} }{u_2  } +  \frac{h^{'}}{{ c_j} +h }  )
    - \sum_{j \in [2]} ( \frac{u_2^{'} }{u_2  } +  \frac{h^{'}}{ c_i +h })  ( \frac{u_2^{'} }{u_2  }   )  + X_i. \nonumber
\end{eqnarray}
where
 \begin{eqnarray} \label{aq7s}
\ \ \ \ \ X_i = h^{'} \{ \frac{ A_i}{(h+ c_i)^2}
   + \frac{ 2 }{ h+ c_i  } \sum_{j \in [2]} \beta_{ij}
   +2   \sum_{1<j \notin [i], [2]}^n
   \frac{   \beta_{ij} }{(h+ c_i)(h+ c_j)} \}.
\end{eqnarray}
In the above,
$A_i :=  \sum_{j \in [i], j \neq i} \{  a_i e_i {\gamma}_{jj}^i  - a_j e_j {\gamma}_{ii}^j - ({\gamma}_{ii}^j)^2- ({\gamma}_{jj}^i)^2
   - \sum_{k \neq 1, i, j , k \in [i]} {\gamma}_{jj}^k {\gamma}_{ii}^k \\
+ \sum_{k \neq 1, i, j, k \in [i]}
  ( -{\gamma}_{ij}^k  {\gamma}_{jk}^i -{\gamma}_{ij}^k{\gamma}_{kj}^i +{\gamma}_{ji}^k{\gamma}_{kj}^i )  \},
  $   and

\medskip
    $\beta_{ij} :=  \sum_{k \neq 1, k \notin [i],  k \notin [j] }
   {\gamma}_{ji}^k(x) {\gamma}_{ij}^k(x).$ Note that $A_i$ and $\beta_{ij} $ depend on $x$, not on $s$,    and that $\beta_{ij} = \beta_{ji}$.

For $i \in  [2]$,
 from   (\ref{u1}), (\ref{rii}), (\ref{aq01b}),   (\ref{aq02b}) and  (\ref{aq03b}),  the formula $ \lambda - \zeta_i f^{'} = R_{ii} $ gives
 \begin{eqnarray} \label{aq7b}
\ \ \ \ \lambda -  \frac{u_2^{'} }{u_2  }f^{'}
   = -\frac{u_2^{''} }{u_2 }
  - \sum_{1<j \notin [2]}   \frac{u_2^{'} }{u_2  }  ( \frac{u_2^{'} }{u_2  } +  \frac{h^{'}}{{ c_j} +h }  )     - \sum_{j \in [2], j \neq i}  \frac{u_2^{'} }{u_2  }   \frac{u_2^{'} }{u_2  }    + X_i,
\end{eqnarray}
where
 \begin{eqnarray} \label{aq7bs}
X_i  =  h^{'}(  A_i
   +\sum_{1<j \notin  [2]}^n
   2 \frac{ \beta_{ij} }{ h+ c_j  }).
\end{eqnarray}

At this point we need the following elementary lemma.
\begin{lemma} \label{L41}
Let $k_0$, $k_1$ and $n_0$ be natural numbers and $c_1, \cdots , c_{n_0}$ be pairwise different real numbers. Let $G(x)$ be a function defined on an open interval $I \subset \mathbb{R}$ by  $G(x) := \sum_{s=0}^{k_0} \beta_{s} x^s  + \sum_{i=1}^{n_0}\sum_{t=1}^{k_1}  \frac{\alpha_{i, t}}{(x+ c_i)^t} $, for  constants  $\beta_s$ and  $\alpha_{i. t} $.

Suppose that $G(x) = 0$ on $I$.
 Then   $\beta_s= \alpha_{i. t}=0 $ for any $0 \leq s \leq k_0$ and any  $ 1  \leq i \leq n_0, \ 1 \leq t \leq k_1 $.
\end{lemma}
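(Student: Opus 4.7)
My plan is to use the uniqueness of partial fraction decomposition, which I will prove by isolating coefficients one at a time via local behavior near each pole.

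First, I would observe that $G(x)$ is a rational function in $x$: clearing denominators by multiplying by $\prod_{i=1}^{n_0}(x+c_i)^{k_1}$ produces a polynomial $P(x)$ of degree at most $k_0 + n_0 k_1$, and $G(x)=0$ on the open interval $I$ forces $P(x)=0$ on $I$. Since $P$ is a polynomial with infinitely many zeros, $P \equiv 0$ on $\mathbb{R}$. Hence $G(x)=0$ for every real $x \notin \{-c_1,\ldots,-c_{n_0}\}$, not merely on $I$.

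Next, I would extract the coefficients $\alpha_{i,t}$ by examining the behavior of $G$ near each pole. Fix $i_0 \in \{1,\ldots,n_0\}$. Multiply $G(x)$ by $(x+c_{i_0})^{k_1}$; the polynomial terms and all terms involving $c_j$ with $j \neq i_0$ remain bounded as $x \to -c_{i_0}$ (since the $c_j$ are pairwise different, $-c_{i_0} + c_j \neq 0$), while the terms $\frac{\alpha_{i_0,t}}{(x+c_{i_0})^t}$ for $t < k_1$ become $\alpha_{i_0,t}(x+c_{i_0})^{k_1-t}$, which vanish in the limit. Taking $x \to -c_{i_0}$ therefore yields $\alpha_{i_0,k_1}=0$. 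Subtracting this (now identically zero) term and repeating with the exponent $k_1-1$ in place of $k_1$ gives $\alpha_{i_0,k_1-1}=0$, and iterating downward through $t=k_1-2,\ldots,1$ shows $\alpha_{i_0,t}=0$ for every $t$. Applying this procedure to each $i_0 \in \{1,\ldots,n_0\}$ kills all of the $\alpha_{i,t}$.

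Finally, with all $\alpha_{i,t}=0$ the identity reduces to $\sum_{s=0}^{k_0}\beta_s x^s \equiv 0$ on $I$. A nonzero real polynomial of degree $\leq k_0$ has at most $k_0$ real zeros, and $I$ is infinite, so every $\beta_s=0$. This concludes the argument.

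The proof has no serious obstacle; the only mild subtlety is that the iterative extraction must genuinely proceed from $t=k_1$ downward, because isolating $\alpha_{i_0,t}$ requires the strictly higher-order coefficients at the same pole to have already been shown to vanish. Everything else is a routine limit computation made possible by the pairwise distinctness of the $c_i$.
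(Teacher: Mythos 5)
Your proof is correct and follows essentially the same route as the paper: both clear denominators by multiplying by $\prod_{i}(x+c_i)^{k_1}$ to get a polynomial vanishing on $I$ (hence identically), then peel off the top-order coefficient $\alpha_{i_0,k_1}$ at each pole and induct downward in $t$ before concluding $\beta_s=0$. Your limit formulation of the coefficient extraction is just a restatement of the paper's evaluation of $\hat{G}$ at $x=-c_j$.
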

\begin{proof}
Define a function $\hat{G}(x) :=  (\sum_{s=0}^{k_0} \beta_{s} x^s  + \sum_{i=1}^{n_0}\sum_{t=1}^{k_1}  \frac{\alpha_{i, t}}{(x+ c_i)^t})\cdot \Pi_{i=1}^{n_0} (x+ c_i)^{k_1} $  on $\mathbb{R}$.
 As $G=0$ on $I$,  $\hat{G} =0 $ on $I$. The function  $\hat{G} (x)  $ is real analytic on $\mathbb{R}$, so $\hat{G} =0 $ on    $\mathbb{R}$. For each $1 \leq j \leq n_0$, we get   $0=\hat{G} (-c_j) = \alpha_{j, k_1} \cdot  \Pi_{i=1, i \neq j}^{n_0} (-c_j + c_i)^{k_1}$. So,  $ \alpha_{j, k_1} =0$.
 Continuing inductively, we can show  $ \alpha_{i, t}=0$ for any $ 1  \leq i \leq n_0$ and $ 1 \leq t \leq k_1 $. Then  $\beta_{s}=0$ for  any  $0 \leq s \leq k_0$.
\end{proof}

In this paper we often treat functions in the form of $G(x)$ in Lemma \ref{L41}.
We shall say that  $\frac{1}{(x+ c_i)^t}$ has degree $-t$.    This handy definition of degree is based on Lemma \ref{L41}. Note that $\frac{1}{x(x+1)}$ does not have degree $-2$ but degree $-1$,   because  $\frac{1}{x(x+1) }=  \frac{1}{x}  - \frac{1}{x+1} $.

\bigskip
Recall the coordinates  of Lemma \ref{threesolbx} (v).
From (\ref{x3j0da2}), $h^{'} \neq 0$.
Divide (\ref{aq7b})  by $h^{'}$ and
take derivative $\frac{\partial  }{\partial x_k},  2 \leq k \leq n$
to get
$0= \frac{\partial A_i }{\partial x_k}
   +\sum_{1< j \notin  [2]}^n
    \frac{2 }{ h+ c_j  } \frac{\partial  \beta_{ij}}{\partial x_k}$ for $i \in [2]$.
 By Lemma \ref{L41}, we get $\frac{\partial A_i }{\partial x_k} =0$ and for each class $[l_0] \neq [2]$,
     $\sum_{j \in  [l_0] } \frac{\partial  \beta_{ij}}{\partial x_k} =0 $. So. $A_i$ and
 $\sum_{j \in  [l_0] }  \beta_{ij} $ are constants.
 We claim that $A_i$ and
 $\sum_{j \in  [l_0] }  \beta_{ij} $ are independent of the choice of $i$ in $[i]=[2]$.
 Assume that the $\lambda_2$ eigenspace has bigger than one dimension.
As (\ref{aq7b}) holds for any $i_1, i_2 \in [2]$, subtracting one by the other we get
$0=( A_{i_1}  -  A_{i_2}  )
   +\sum_{1<j \notin  [2]}^n
   2 \frac{1 }{ h+ c_j  }   ( \beta_{i_1 j} -\beta_{i_2 j}   ) $.
Again by Lemma \ref{L41}, $A_{i_1} - A_{i_2} =0$
and  $\sum_{j \in  [l_0] } \beta_{i_1 j} -\sum_{j \in  [l_0] } \beta_{i_2 j} =0. $
So, the claim is proved.

\bigskip
Similarly, for $1< i \notin [2]$, divide (\ref{aq7})  by $h^{'}$ and
take derivative $\frac{\partial  }{\partial x_k},  2 \leq k \leq n$
to get
 \begin{eqnarray*}
0=
  \frac{1}{(h+ c_i)^2}   \frac{\partial A_i }{\partial x_k}
   + \frac{ 2 }{ h+ c_i  }\{ \sum_{j \in [2]} \frac{\partial  \beta_{ij}}{\partial x_k}
   +  \sum_{1<j \notin [i], [2]}
   \frac{1 }{(c_j - c_i )} \frac{\partial  \beta_{ij}}{\partial x_k} \} \\
  -  \sum_{1<j \notin [i], [2]} \frac{2 }{h+ c_j}
   \frac{  \frac{\partial  \beta_{ij}}{\partial x_k} }{(c_j - c_i )}  . \hspace{4.5cm}
\end{eqnarray*}

We then get three equalities by Lemma \ref{L41};

(i) $\frac{\partial A_i }{\partial x_k} =0$,

 (ii)   $ \sum_{j \in [2]} \frac{\partial  \beta_{ij}}{\partial x_k}
   +  \sum_{1<j \notin [i], [2]}
   \frac{ 1 }{(c_j - c_i )} \frac{\partial  \beta_{ij}}{\partial x_k}   =0$,

(iii) $\sum_{j \in [l_0] } \frac{\partial  \beta_{ij}}{\partial x_k} =0 $ for $[l_0] \neq  [i], [2]$.

\medskip
We use  similar argument to dealing with (\ref{aq7b}).
For $i \notin [2]$,   $A_i$ is a constant independent of the choice of $i$ in $[i]$.
Also from (iii)  $\sum_{j \in [l_0] }  \beta_{ij}$ with $[l_0] \neq  [i], [2]$  is a constant independent of the choice of $i$ in $[i]$.
This gives  $ \sum_{1< j \notin [i], [2]}
   \frac{ 1 }{(c_j - c_i )} \frac{\partial  \beta_{ij}}{\partial x_k}   =0$ in (ii). So, by (ii) $\sum_{j \in [2]}  \beta_{ij}$   is a constant independent of the choice of $i$ in $[i]$.   We proved


\begin{lemma} \label{cons}
 For $i \geq 2$ and  $[l_0] \neq  [i]$,  $A_i$ and $\sum_{j \in  [l_0] }  \beta_{ij} $  are  constants independent of the choice of $i$ in $[i]$.
\end{lemma}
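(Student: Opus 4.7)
The plan is to exploit the fact that equations (\ref{aq7}) and (\ref{aq7b}) both express a function of $s$ alone as a rational function of $h(s)$ whose coefficients split into pieces depending only on $s$ and pieces depending only on the transverse coordinates $x_2,\dots,x_n$. Differentiating in $x_k$ for $k\ge 2$ annihilates the former and leaves a pure rational-in-$h$ identity to which Lemma \ref{L41} applies.

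First I treat $i\in[2]$: dividing (\ref{aq7b}) by $h'$ and applying $\partial/\partial x_k$ produces
\[
0 = \frac{\partial A_i}{\partial x_k} + 2\sum_{1<j\notin[2]} \frac{1}{h+c_j}\frac{\partial \beta_{ij}}{\partial x_k}.
\]
Grouping the indices $j$ by their equivalence classes (on each of which $c_j$ is constant), this fits the template of Lemma \ref{L41}, yielding $\partial A_i/\partial x_k=0$ and $\sum_{j\in[l_0]}\partial\beta_{ij}/\partial x_k=0$ for each class $[l_0]\ne[2]$. Hence $A_i$ and each $\sum_{j\in[l_0]}\beta_{ij}$ are constant. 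To upgrade this to independence of $i$ within $[2]$, I subtract (\ref{aq7b}) evaluated at two indices $i_1,i_2\in[2]$; the $s$-only parts cancel, and a second application of Lemma \ref{L41} to the resulting identity gives $A_{i_1}=A_{i_2}$ and $\sum_{j\in[l_0]}\beta_{i_1 j}=\sum_{j\in[l_0]}\beta_{i_2 j}$.

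For $1<i\notin[2]$ the same procedure applied to (\ref{aq7}) produces an identity in $h$ containing three kinds of pole: $(h+c_i)^{-2}$, $(h+c_i)^{-1}$, and $(h+c_j)^{-1}$ with $[j]\ne[i],[2]$. The cross term $\tfrac{1}{(h+c_i)(h+c_j)}$ must first be split by partial fractions as $\tfrac{1}{c_j-c_i}\bigl(\tfrac{1}{h+c_i}-\tfrac{1}{h+c_j}\bigr)$, redistributing its contribution between the last two buckets. Lemma \ref{L41} then cleanly separates the result into the three equalities (i)--(iii) displayed above. Equality (iii) gives that $\sum_{j\in[l_0]}\beta_{ij}$ is constant for each $[l_0]\ne[i],[2]$; equality (i) gives the same for $A_i$; and using (iii) to kill the last sum of (ii) isolates $\sum_{j\in[2]}\partial\beta_{ij}/\partial x_k=0$, handling the remaining class. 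Independence of the choice of $i$ within $[i]$ again follows by subtracting (\ref{aq7}) for two indices of $[i]$ and reapplying Lemma \ref{L41}.

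The main technical obstacle is the $i\notin[2]$ case: one must carry out the partial fraction rearrangement carefully and then verify that the three degree classes produced by Lemma \ref{L41} separate exactly into the three meaningful combinations $A_i$, $\sum_{j\in[2]}\beta_{ij}$, and $\sum_{j\in[l_0]}\beta_{ij}$ for $[l_0]\ne[i],[2]$. Once this separation is achieved, the vanishing of the $x_k$-derivatives and the subtraction argument for independence of the representative $i\in[i]$ go through uniformly across the classes.
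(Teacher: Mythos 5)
Your proposal is correct and follows essentially the same route as the paper: divide (\ref{aq7b}) and (\ref{aq7}) by $h'$, differentiate in $x_k$ to kill the $s$-only terms, split the cross term $\frac{1}{(h+c_i)(h+c_j)}$ by partial fractions, apply Lemma \ref{L41} to separate the pole classes into the three identities, and obtain independence of the representative by subtracting the equation for two indices in the same class and applying Lemma \ref{L41} once more. This matches the paper's argument step for step.
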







\section{At least three distinct $\lambda_i$'s  among $i \geq 2 $ }

Let $(M^n,g,f)$, $n \geq 4$, be an $n$-dimensional gradient Ricci soliton with harmonic Weyl curvature. In this section we further assume that $(M^n,g,f)$ has  at least three distinct $\lambda_i$'s  among $i \geq 2 $  in a refined adapted frame field $F_i$ and shall draw a contradiction so that we obtain Proposition \ref{pro3}.

By Lemma \ref{gg},   $\zeta_i \neq 0$ for any $i >1$.
We denote by $n_i$ the dimension of the $\lambda_i$-eigenspace.
Recall that $Z_{is}$, the right hand side of (\ref{riis}) does not depend on the classes $[i], [s]$ while $[i] \neq [s]$. If  $[t], [i], [s]$ are pairwise different, the equality $Z_{is}- Z_{it}=0$ yields


 \begin{eqnarray} \label{aq01n}
\zeta_t-\zeta_s     + \frac{X_i - X_s}{ (\zeta_s -\zeta_i) }
=      \frac{X_i - X_t}{ (\zeta_t -\zeta_i) }.
\end{eqnarray}

First, we use $ Z_{2s}- Z_{2t}=0$ for  $t, s \notin [2]$ with $[t] \neq [s]$.
From  (\ref{x3j0d}),  we get
$\zeta_t-\zeta_s = \frac{c_s -{ c_t} }{ ({ c_t} +h ) (c_s +h) } h^{'}$
and  $ \zeta_t-\zeta_2 = \frac{h^{'} }{ ({ c_t} +h )  }$. Putting $i=2$ into (\ref{aq01n}) and  multiplying (\ref{aq01n}) with  $ ( c_s +h ) (c_t +h) $,  we get
 \begin{eqnarray} \label{aq01n5}
  \ \ \ \ \ \  \ \ \ (c_s - c_t )  h^{'}  +\frac{( c_s +h ) (c_t +h)( X_2 - X_s)}{(\zeta_s -\zeta_2)}
  =\frac{ ( c_s +h ) (c_t +h)( X_2 - X_t)}{(\zeta_t -\zeta_2)}.
\end{eqnarray}

From (\ref{aq7s}) and (\ref{aq7bs}), we get
 \begin{eqnarray} \label{aq01n5j}
\frac{( X_2 - X_s)}{(\zeta_s -\zeta_2)
}  =(h + c_s )  (A_2   + Y_2- \frac{A_s  }{(h+ c_s)^2}- Y_s).
\end{eqnarray}
where $ Y_s =  \frac{2 }{ h+ c_s  } \sum_{j \in [2]} \beta_{sj}
   +2  \sum_{1< j \notin [s], [2]}
   \frac{   \beta_{sj} }{(h+ c_s )(h+ c_j  ) }   $  for $ s \notin [2]$, and   $Y_2  =  \sum_{1< j \notin  [2]}^n
   2 \frac{ \beta_{2j} }{ h+ c_j  }$.
Putting (\ref{aq01n5j})   into  (\ref{aq01n5}), we get
 \begin{eqnarray} \label{aq01n5jhg}
\ \ \ \ (c_s - c_t )  h^{'} =  -( c_s +h ) (c_t +h)(h + c_s )  (A_2   + Y_2- \frac{A_s  }{(h+ c_s)^2}- Y_s) \\
  +( c_s +h ) (c_t +h)(h + c_t )  (A_2   + Y_2- \frac{A_t  }{(h+ c_t)^2}- Y_t). \nonumber
\end{eqnarray}
We use Lemma \ref{cons} to write the right hand side of (\ref{aq01n5jhg})  in the form of the function  $G(h)$ in Lemma \ref{L41}.
Then the $h^3$ terms cancel out and
(\ref{aq01n5jhg}) yields
  \begin{eqnarray} \label{aq52}
 h^{'}= F(h)
:=   ch^2 +ah + b + \sum_{1<l \notin [2]} \frac{d_l}{h+ c_l} ,
\end{eqnarray}
 for some constants $a, b, c, d_l$;
here we may assume  $d_l$ to be independent  of $l \in [l] $ because $ c_l$ are so.
Take derivative $\frac{d}{ds}$ to get $    h^{''} = \frac{dF}{dh} h^{'}
=   (2ch + a - \sum_{1<l \notin [2]} \frac{d_l}{(h+ c_l)^2})h^{'}  .$

As $h^{'} = \frac{1}{u_2^2}$, we have $ \zeta_2= \frac{u_2^{'}}{u_2}= - \frac{h^{''} }{2h^{'}  }  = - \frac{1 }{2  } \frac{dF}{dh}$. We get
  \begin{eqnarray} \label{aq0z1}
\zeta_2  =     - ch - \frac{a}{2  }+ \sum_{1<l \notin [2]} \frac{d_l}{2 (h+ c_l)^2}.
\end{eqnarray}

Take derivative $\frac{d}{ds}$ to get $ \zeta_2^{'}= - h^{'}  (c + \sum_{1< l \notin [2]} \frac{d_l}{(h+ c_l)^3}) =-( ch^2 +ah + b + \sum_{1<l \notin [2]} \frac{d_l}{h+ c_l} )(c + \sum_{1<l \notin [2]} \frac{d_l}{(h+ c_l)^3}),
  $ we compute
 \begin{eqnarray} \label{aq0z}
\zeta_2^{'} +  \zeta_2^2  =\frac{a^2}{4} -  cb \hspace{6.5cm}   \\
- \sum_{1< l \notin [2]} \frac{cd_l}{h+ c_l}  -( ch^2 +ah + b + \sum_{1<l \notin [2]} \frac{d_l}{h+ c_l} )( \sum_{1<l \notin [2]} \frac{d_l}{(h+ c_l)^3}) \nonumber  \\
 - ch \sum_{1<l \notin [2]} \frac{d_l}{(h+ c_l)^2} -a \sum_{1<l \notin [2]} \frac{d_l}{2(h+ c_l)^2}  +  ( \sum_{1<l \notin [2]} \frac{d_l}{2(h+ c_l)^2})^2. \ \nonumber
\end{eqnarray}

For $1< i \notin [2]$,
$ \zeta_i - \zeta_2 = \frac{h^{'}}{ { c_i} +h } =  \frac{ ch^2 + ah + b + \sum_{1<l \notin [2]} \frac{d_l}{h+ c_l} }{ { c_i} +h }.$  Using   $\frac{1}{(h+ c_l) (h+ c_i) } = \frac{1}{c_i - c_l}\{ \frac{1}{(h+ c_l) }   - \frac{1}{ (h+ c_i) } \}  $ for $c_i \neq c_l$, we get

 \begin{eqnarray} \label{aq0z3h1}
\zeta_i - \zeta_2 =ch + a -c c_i
+\frac{  cc_i^2 -ac_i + b  }{ { c_i} +h } \hspace{3.3cm} \\
 +  \sum_{1<l \notin [2], [i] }  \frac{d_l}{c_i - c_l}\{ \frac{1}{(h+ c_l) }   - \frac{1}{ (h+ c_i) } \}
 +\frac{n_i d_i}{(h+ c_i)^2}. \nonumber
\end{eqnarray}


We use (\ref{aq7s}) and $\zeta_i-\zeta_s = \frac{{ c_s} -{ c_i} }{ ({ c_i} +h ) ({ c_s} +h) } h^{'}$ for $s \notin [2]$ and $ s \notin [i]$, to write
 \begin{eqnarray} \label{aq0z37}
\frac{X_i  }{ (\zeta_s -\zeta_i) } =    \frac{(c_s +h )  }{ (c_i -c_s) }\{ \frac{ A_i}{(h+ c_i)}   + 2 \sum_{j \in [2]} \beta_{ij} +2   \sum_{1< j \notin [i], [2]}   \frac{   \beta_{ij} }{h+ c_j } \}. \hspace{0cm}
\end{eqnarray}



In (\ref{aq0z3h1}),  $\frac{n_i d_i}{(h+ c_i)^2} $ is the only degree $-2$ term for the variable $h$ in
$ \zeta_i - \zeta_2$ in the sense of the paragraph just below Lemma \ref{L41}.
From (\ref{aq0z37}) and (\ref{aq01n5j})  we see  that there is no degree $-2$ term in $  \frac{X_i - X_s }{ (\zeta_s -\zeta_i) } $ as well as  $  \frac{X_2- X_s }{ (\zeta_s -\zeta_2) } $.
Then from  $0=Z_{2s}- Z_{is} = \zeta_i - \zeta_2 -  \frac{X_i - X_s }{ (\zeta_s -\zeta_i) } +   \frac{X_2 - X_s }{ (\zeta_s -\zeta_2) }$,  we get  $\frac{n_i d_i}{(h+ c_i)^2} =0$ by  Lemma \ref{L41}. We obtain $d_i=0$ for any $1< i \leq n$ with  $i \notin [2] $.
Then (\ref{aq52}) becomes
 \begin{eqnarray} \label{aq4w}    h^{'}
=   ch^2 +ah + b,\end{eqnarray}
  and  from (\ref{aq0z1})$\sim$(\ref{aq0z3h1}),
 \begin{eqnarray} \label{aq0z3hxt2}
\zeta_2  =  -ch  -  \frac{a}{2  },  \hspace{4.8cm} \nonumber \\
\zeta_2^{'} +  \zeta_2^2  =\frac{a^2}{4} -  cb, \hspace{4.2cm} \\
\zeta_s =
\frac{a}{2} -c c_s   +\frac{  cc_s^2 -ac_s + b  }{  c_s +h },  \ \ {\rm for} \ 1< s \notin [2]. \nonumber
\end{eqnarray}
 We put (\ref{aq0z3hxt2}) and (\ref{aq01n5j})   into  (\ref{riis}) with $[i]=[2]$ and  $[s] \neq [2]$ and obtain
 \begin{eqnarray} \label{aq0z3hx}
f^{'}=( n_{2} -1)(-ch  -  \frac{a}{2  })  + ( n_{s} -1 ) ( \frac{a}{2} -c c_s   +\frac{  cc_s^2 -ac_s + b  }{ { c_s} +h }  )  \hspace{1cm} \\ +   \sum_{1<j \notin [2], [s]}  (\frac{a}{2} -c c_j  +\frac{  cc_j^2 -ac_j + b  }{ { c_j} +h } )    +(h + c_s )  (A_2- \frac{A_s  }{(h+ c_s)^2} + Y_2- Y_s). \nonumber
\end{eqnarray}

  Recalling $Y_2$ and $Y_s$ from (\ref{aq01n5j}), we have

$\{ (h + c_s )  ( Y_2- Y_s) \}^{'} = -\sum_{1< j \notin  [2], [s]}^n
   2 \frac{ \beta_{2j}(-c_j  + c_s ) }{( h+ c_j)^2  }  h^{'}
 +2  \sum_{1< j \notin [2], [s]}
   \frac{   \beta_{sj} }{  (h+ c_j)^2 }h^{'} $.

\smallskip
Using this, we get from (\ref{aq0z3hx}) and (\ref{aq4w})

$  f^{''} = \frac{dh}{ds} \frac{d f^{'} }{dh}= (ch^2 + ah +b) \{  -c ( n_{2} -1)  -( n_{s} -1 )  \frac{  cc_s^2 -ac_s + b  }{ (  c_s +h )^2}  -  \sum_{1<j \notin [2], [s]}   \frac{  cc_j^2 -ac_j + b  }{ ( c_j +h)^2 }     +A_2 + \frac{A_s  }{(h+ c_s)^2}-  \sum_{1< j \notin  [2], [s]}^n
   2 \frac{ \beta_{2j} (-c_j + c_s )}{ (h+ c_j )^2 }
  +2\sum_{1< j \notin [2], [s]}
        \frac{\beta_{sj }}{(h+ c_j)^2}   \} $.

\medskip
 Set  $D_s :=   -( n_{s} -1 )  (  cc_s^2 -ac_s + b)    + A_s   $,  and

    $ E_{j, s} :=    (  cc_j^2 -ac_j + b )+
   2 \beta_{2j} (-c_j + c_s )
-2 \beta_{sj} $. With these, we rewrite

$ f^{''} = (ch^2 + ah +b) \{  -c ( n_{2} -1)   +A_2  +  \frac{  D_s}{(h+ c_s)^2} -  \sum_{1<j \notin [2], [s]} \frac{  E_{j, s} }{ ( c_j +h)^2 }  \} $.


As  $ \frac{  ch^2 + ah +b  }{ (  c_s +h )^2} = c + \frac{a - 2c c_s}{h+ c_s }  +  \frac{c c_s ^2 - ac_s +b }{(h+ c_s )^2} $, we get
 \begin{eqnarray} \label{aq0z3hx8}
 f^{''} = (ch^2 + ah +b)  \{  -c ( n_{2} -1)   +A_2 \} \hspace{4.5cm} \\ +  D_s\{  c + \frac{a - 2c c_s}{h+ c_s }  +  \frac{c c_s ^2 - ac_s +b }{(h+ c_s )^2} \}
 -  \sum_{1<j \notin [2], [s]}   E_{j, s}  \{ c + \frac{a - 2c c_j}{h+ c_j }  +  \frac{c c_j ^2 - ac_j +b }{(h+ c_j )^2} \}. \nonumber
\end{eqnarray}
We also have $ f^{''} =\lambda + (n-1)(\frac{a^2}{4} -cb) $ which comes from (\ref{x1}) and (\ref{x2});
 $\zeta_2^{'} +  \zeta_2^2   =\frac{ f^{''} - \lambda}{n-1}$.
 So,
   \begin{eqnarray} \label{aq0z3hx82}
\lambda + (n-1)(\frac{a^2}{4} -cb) = (ch^2 + ah +b)  \{  -c ( n_{2} -1)   +A_2 \} \hspace{1.5cm} \\ +  D_s\{  c + \frac{a - 2c c_s}{h+ c_s }  +  \frac{c c_s ^2 - ac_s +b }{(h+ c_s )^2} \}
 -  \sum_{1<j \notin [2], [s]}   E_{j, s}  \{ c + \frac{a - 2c c_j}{h+ c_j }  +  \frac{c c_j ^2 - ac_j +b }{(h+ c_j )^2} \}. \nonumber
\end{eqnarray}

Lemma \ref{L41} and (\ref{aq0z3hx82}) first give $ c \{  -c ( n_{2} -1)   +A_2 \} = a \{  -c ( n_{2} -1)   +A_2 \}=0  $. If $ -c ( n_{2} -1)   +A_2 \neq  0$, then $a=c=0$ and $\zeta_2 =0$ from (\ref{aq0z3hxt2}), which is a contradiction to the assumption of this section. So, we get   $A_2 = c ( n_{2} -1).$

 Lemma \ref{L41} and (\ref{aq0z3hx82}) give, for  $1< s \notin [2]$,
 $(a-2cc_s)  D_s  =0 $ and
 $(c c_s ^2 - ac_s +b )  D_s  =0$.   If $D_{s} \neq0$, then $a-2cc_s=c c_{s} ^2 - ac_{s} +b =0$, so that   $\zeta_{s} =0$, a contradiction.  So, $D_{s} = 0$.
 Lemma \ref{L41} and (\ref{aq0z3hx82}) also give, for  $ [l_0] \neq [2], [s]$,
 if $ \sum_{j \in [l_0]} E_{j, s}  \neq 0$,  then $a - 2c c_j=c c_j ^2 - ac_j +b =0$ gives $\zeta_j =0$, a contradiction.
 So,  $ \sum_{j \in [l_0]} E_{j, s} = 0$ and (\ref{aq0z3hx82}) reduces to $\lambda + (n-1)(\frac{a^2}{4} -cb)=0$.
 As $f^{''} =\lambda + (n-1)(\frac{a^2}{4} -cb)=0$, so $f^{'} = f_0$, a nonzero constant.


We recall
the formula (1.34) in \cite{Chow}:  $R+ (f^{'} )^2 - 2 \lambda f =\hat{c}$ for a constant $\hat{c}$.  From (\ref{x1}) and (\ref{x1z}) we get $ R = n \lambda   -\sum_{i>1}\zeta_i{f^{'}}- f^{''}=  n \lambda   -\sum_{i>1}\zeta_i{f_0}$. So,
\begin{eqnarray}
- (f_0)^2 +2\lambda f+ \hat{c}  = n \lambda -\sum_{i>1}\zeta_i{f_0}. \label{k0569}
\end{eqnarray}

From $f^{'} = \frac{df}{dh}  \frac{dh}{ds} = f_0$ and (\ref{aq4w}),
we get $\frac{df}{dh}  = \frac{f_0}{ch^2 + ah +b}$.
We take derivative $\frac{d}{dh}$ on (\ref{k0569}) and  use (\ref{aq0z3hxt2}) to get,
 \begin{eqnarray} \label{62}
 \frac{2\lambda}{ch^2 + ah +b}  =  n_2 c + \sum_{1<j \notin [2]} \frac{  cc_j^2 -ac_j + b  }{ ({ c_j} +h )^2}.
\end{eqnarray}

We can prove
\begin{prop} \label{pro3}
Any  gradient Ricci soliton  with harmonic Weyl curvature has less than three distinct $\lambda_i$'s  among $\ i \geq 2$ in a refined adapted frame field.
\end{prop}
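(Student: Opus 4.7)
The plan is to exploit equation (\ref{62}) to obtain a polynomial-plus-rational identity in $h$ whose coefficients must vanish, forcing $a = b = c = 0$ in (\ref{aq4w}) and contradicting $h' = 1/u_2^2 \neq 0$. The hypothesis of at least three distinct $\lambda_i$ among $i \geq 2$ translates, via the equivalence relation (\ref{x3j0dk}), to the existence of at least three equivalence classes on $\{2, \ldots, n\}$; so besides $[2]$ there are at least two further classes $[s] \neq [t]$, which by (\ref{x3j0dk}) satisfy $c_s \neq c_t$. This abundance of classes is what will drive the contradiction.

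First I would multiply both sides of (\ref{62}) by the nowhere-vanishing factor $ch^2 + ah + b$. Then, applying the polynomial-division identity
\begin{eqnarray*}
\frac{ch^2 + ah + b}{(c_j + h)^2} = c + \frac{a - 2cc_j}{c_j + h} + \frac{cc_j^2 - ac_j + b}{(c_j + h)^2},
\end{eqnarray*}
already used just before (\ref{aq0z3hx8}), the right-hand side becomes a polynomial in $h$ of degree at most two plus first- and second-order pole terms at the pairwise distinct points $-c_j$, $[j] \neq [2]$, while the left-hand side reduces to the constant $2\lambda$. I would then apply Lemma \ref{L41}, grouping summands with $i$ in the same class $[j]$ so that each class contributes with a positive multiplicity $n_i$, $i \in [j]$. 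The coefficient of $h^2$ gives $n_2 c^2 = 0$, forcing $c = 0$; and the coefficient of $1/(c_j + h)^2$ gives $n_i (cc_j^2 - ac_j + b)^2 = 0$ for every class $[j] \neq [2]$ and $i \in [j]$, which after substituting $c = 0$ reads $b = a c_j$.

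Applying $b = a c_j$ to the two classes $[s]$ and $[t]$ yields $a(c_s - c_t) = 0$; since $c_s \neq c_t$, this forces $a = 0$ and then $b = 0$. Hence $a = b = c = 0$, making $h' = ch^2 + ah + b \equiv 0$, which contradicts $h' = 1/u_2^2 > 0$ from (\ref{x3j0da2}). The main delicacy lies in the correct application of Lemma \ref{L41}: its hypothesis of pairwise distinct shifts is automatic once one groups by equivalence class (distinct classes yield distinct $c_j$), and the positive multiplicities $n_i$ ensure that the vanishing of the squared quantities really forces the inner brackets $cc_j^2 - ac_j + b$ to vanish rather than cancel against one another.
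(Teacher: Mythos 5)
Your proof is correct, and it rests on the same two pillars as the paper's: the identity (\ref{62}) and the linear-independence principle of Lemma \ref{L41}, with the hypothesis of three distinct classes supplying the two distinct values $c_s \neq c_t$ needed at the end. The only real difference is in execution: the paper splits into the cases $c=0$ and $c\neq 0$ (with a further sub-case on whether some $-c_{j_k}$ is a multiple root of $ch^2+ah+b$) and extracts the relations $cc_{j_k}^2-ac_{j_k}+b=0$ by multiplying (\ref{62}) by the factors $(h+c_{j_k})^2$ and evaluating at $h=-c_{j_k}$, whereas you clear the denominator once, expand by the polynomial-division identity, and read off the $h^2$ and double-pole coefficients simultaneously, reaching the same conclusions ($c=0$ and $cc_j^2-ac_j+b=0$ for every class, hence $a=b=0$ and $h'\equiv 0$, contradicting $h'=1/u_2^2>0$) with no case analysis. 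Your handling of the multiplicities — grouping indices by equivalence class so that the shifts $c_j$ are pairwise distinct as Lemma \ref{L41} requires, and noting that the within-class sum of squares cannot cancel — is exactly the point that needs care, and you address it correctly.
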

\begin{proof}
If there are at least three distinct $\lambda_i$'s  among $i \geq 2 $, then we have at least three distinct $c_i$'s. If $c=0$, by Lemma \ref{L41} applied to  (\ref{62}), $-ac_j + b =0$ for $[j] \neq [2]$, so that $a=b=0$ and $\zeta_2 =0$, a contradiction. So, $c \neq 0$.

Let $c_{j_1},  \cdots c_{j_m}$, $m \geq 2$,  be all the distinct $c_j$ values among  $1< j \notin [2]$.
If $-c_{j_1}$ is  a multiple root of $ch^2 + ah +b=0$,
then we multiply  $ (h+ c_{j_k})^2$, $k \geq 2$ on
 (\ref{62}) and then put $h=- c_{j_k}$ to get $cc_{j_k}^2 -ac_{j_k} + b =0$, a contradiction.  So, none of  $-c_{j_1}, \cdots , -c_{j_m} $ can be  a multiple root of $ch^2 + ah +b=0$.
Then multiply  $\Pi_{k=1}^m (h+ c_{j_k})^2$ on
 (\ref{62}) and then put $h=- c_{j_k}$ to get $cc_{j_k}^2 -ac_{j_k} + b =0$.   Now (\ref{62}) reduces to
$ \frac{2\lambda}{ch^2 + ah +b}  =  n_2 c$. We get $c=0$, a contradiction. The proposition is proved.
\end{proof}

\section{ Exactly two distinct $\lambda_i$'s  among $i \geq 2$ }
Let $(M^n,g,f)$, $n \geq 4$, be an $n$-dimensional gradient Ricci soliton with harmonic Weyl curvature and exactly  two distinct $\lambda_i$'s   among $i \geq 2$ in a refined adapted frame field. Then we cannot use
the argument of previous section which is based on (\ref{aq01n}). Here we start with





\begin{lemma}\label{77b1}
For  an $n$-dimensional gradient Ricci soliton  $(M^n,g,f)$, $n \geq 4$, with harmonic Weyl curvature,
suppose that
there is a refined adapted frame field $F_i$ on an open subset $V \subset M$ with
$\lambda_2 =\cdots =\lambda_{k} \neq \lambda_{k+1}=\cdots =\lambda_n$ for $2 \leq k \leq n-1$.

Then
 there exist coordinates $(x_1:=s, x_2, \cdots,  x_n)$ in a neighborhood of each point in $V$
 such that $\nabla s= \frac{\nabla f }{ |\nabla f |}$ and $g$ can be written as
\begin{equation} \label{mtr1a}
g=ds^2+p(s)^2\tilde{g_1}+q(s)^2\tilde{g_2},
\end{equation}
 where  $p:=p(s)$ and $q:=q(s)$ are smooth functions and
 $\tilde{g}_i$, $i=1,2$, is a pull-back of an Einstein metric on a $(k-1)$-dimensional domain $N^{k-1}$ with $x_2, \cdots,  x_k$ coordinates, and on an $(n-k)$-dimensional domain  $U^{n-k}$  with  $x_{k+1}, \cdots,  x_n$ coordinates, respectively.

 We  have   $F_1 =\frac{\partial }{\partial s} $, $F_i =\frac{1}{p} e_i $, $i=2,\cdots, k$ and  $F_j =\frac{1}{ q} e_j $, $j=k+1, \cdots,n$  where $\{ e_i\} $  and  $\{ e_j\}$  are orthonormal frame fields of $\tilde{g}_1$ and  $\tilde{g}_2$, respectively.
\end{lemma}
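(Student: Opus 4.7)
The plan is threefold: first I will factor out the $s$-dependence of the frame $\{F_i\}$ using the refined adapted structure; second, I will show that each level hypersurface of $f$ locally splits as a Riemannian product of the two Ricci-eigenspace leaves and set up coordinates accordingly; third, I will derive the Einstein condition on each factor from the $s$-only dependence of the eigenvalues.

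By Proposition~\ref{pop1} and (\ref{e07}), each $F_i$ with $i\ge 2$ takes the form $F_i(s,x) = e^{-\int_0^s \zeta_i(v)\,dv}\,e_i(x)$ with $e_i$ tangent to level surfaces of $f$ and independent of $s$. Since $\zeta_i$ is constant within each equivalence class, setting $p(s):=\exp\bigl(\int_0^s \zeta_2(v)\,dv\bigr)$ and $q(s):=\exp\bigl(\int_0^s \zeta_{k+1}(v)\,dv\bigr)$ yields $F_i = p^{-1}e_i$ for $2\le i\le k$ and $F_j = q^{-1}e_j$ for $k+1\le j\le n$, with $p,q>0$.

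On a level hypersurface $\Sigma_0 := f^{-1}(c)\cap V$ the orthogonal complementary distributions $D_1 := \mathrm{span}\{e_2,\dots,e_k\}$ and $D_2 := \mathrm{span}\{e_{k+1},\dots,e_n\}$ are each integrable by Lemma~\ref{abc60x}(iv). I would argue that both are in fact totally geodesic inside $\Sigma_0$: total umbilicity of $D_1$ in $M$ gives $(\nabla_{F_i}F_j)^{\perp} = g(F_i,F_j)\,H$ for a single mean-curvature vector $H$, so $\nabla_{F_i}F_j\in D_1$ for $i\neq j$ in $[2]$; meanwhile Lemma~\ref{abc60x}(iii) applied with $u\in D_1$, $v\in D_2$, together with the $s$-only dependence of $\lambda_2$ and $R$ (Lemma~\ref{abc60byx}), forces $\langle\nabla_{F_i}F_i,F_k\rangle=0$ for every $k\in[k+1]$, hence $H$ lies purely along $F_1$. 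Projecting out the $F_1$-component therefore leaves a $D_1$-preserving tangential connection in $\Sigma_0$; symmetrically for $D_2$. The de~Rham decomposition theorem then makes $\Sigma_0$ locally a Riemannian product $N^{k-1}\times U^{n-k}$ with tangent summands $D_1,D_2$. Choosing coordinates $(x_2,\dots,x_k)$ on $N^{k-1}$ and $(x_{k+1},\dots,x_n)$ on $U^{n-k}$ and transporting them to $V$ by the flow of $F_1$ yields the coordinate system of Lemma~\ref{threesolbx}(v) with $F_1=\partial/\partial s$; in these, $e_i\in\mathrm{span}\{\partial_{x_2},\dots,\partial_{x_k}\}$ for $i\in[2]$ and $e_j\in\mathrm{span}\{\partial_{x_{k+1}},\dots,\partial_{x_n}\}$ for $j\in[k+1]$, and orthonormality of $\{F_i\}$ forces $g = ds^2 + p^2\tilde g_1 + q^2\tilde g_2$ with each $\tilde g_a$ depending only on its own block of coordinates.

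Finally, the Einstein property follows by plugging the doubly warped form into the standard Ricci formula: for $i\in[2]$ one has $R(F_i,F_i) = p^{-2}\,\mathrm{Ric}_{\tilde g_1}(e_i,e_i) + \phi(s)$ and $R(F_i,F_j) = p^{-2}\,\mathrm{Ric}_{\tilde g_1}(e_i,e_j)$ for $i\neq j$, where $\phi(s)$ is built from $p,q,p',q',p'',q''$ and $k,n$. Since $\{F_i\}$ is a Ricci eigenframe and $\lambda_2 = R(F_i,F_i)$ depends only on $s$ with the same value for every $i\in[2]$ (Lemma~\ref{abc60byx}), we get $\mathrm{Ric}_{\tilde g_1}(e_i,e_j) = 0$ for $i\neq j$, and equating the $x$-dependent LHS with the $s$-dependent RHS in the diagonal identity forces $\mathrm{Ric}_{\tilde g_1}(e_i,e_i)$ to be a single constant $\mu_1$; hence $\mathrm{Ric}_{\tilde g_1} = \mu_1\tilde g_1$. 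The same argument applied to $[k+1]$ yields Einstein $\tilde g_2$. I expect the total-geodesicity and de~Rham argument yielding the product decomposition of $\Sigma_0$ to be the main obstacle, since the Einstein verification is mechanical once the doubly warped form is in hand.
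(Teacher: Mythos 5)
Your proposal is correct, and it reaches the doubly warped form by a noticeably different route than the paper. The paper works with the complementary pair $D_1=\mathrm{span}\{F_1,\dots,F_k\}$ (note: including $F_1$) and $D_2=\mathrm{span}\{F_{k+1},\dots,F_n\}$, uses their integrability to put $g$ in block-diagonal coordinate form (citing Lemma 4.2 of \cite{Ki}), kills the cross-dependence of the blocks via the Christoffel identity $\langle\nabla_{\partial_i}\partial_j,\partial_l\rangle=0$ coming from (\ref{y03}), and then recovers the $s$-dependence by integrating the umbilicity ODE $\tfrac12\partial_s g_{ij}=\zeta_n g_{ij}$. You instead split a single level hypersurface as a Riemannian product via totally geodesic complementary foliations and the local de Rham theorem, and then read off the warping functions directly from the refined-frame identity (\ref{e07}), $F_i=e^{-\int\zeta_i}e_i$, so no ODE is needed. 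Two small remarks on your version: (a) your totally-geodesic claim is exactly $\Gamma_{ij}^m=0$ for $i,j$ in one class and $m$ in the other, which is already (\ref{y03}); quoting that is both shorter and safer than invoking umbilicity of $D_1$, since if $\lambda_1$ happens to coincide with $\lambda_2$ or $\lambda_n$ your $D_a$ is not literally a Ricci eigenspace and Lemma \ref{abc60x}(iv) does not apply to it verbatim (though the conclusion survives); (b) the propagation of the product coordinates off the slice tacitly uses ${\theta_t}_*\partial_{x_j}=\partial_{x_j}$ together with the eigenspace-preservation of ${\theta_t}_*$ from Lemma \ref{L61}, which is worth stating. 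Your Einstein verification by separating the $s$-dependence from the slice-dependence in the diagonal Ricci identity is precisely what the paper's appeal to ``mimicking Lemma 5.1 of \cite{Ki}'' amounts to.
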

\begin{proof}

This proof is similar to that of  Lemma 4.3  in \cite{Ki}, but is more simplified.
From (\ref{lambda06ax}), (\ref{y03}) and  (\ref{e07u}),
the span $D_1$ of $F_1, F_2, \cdots, F_{k}$ is integrable and $D_2$ of $F_{k+1}, \cdots, F_n$ is integrable.
Note that Lemma 4.2 of  \cite{Ki} can hold without dimensional restriction on $D_1$ and $D_2$.
So, the integrability of $D_1$ and $D_2$ here implies that  the metric $g$ can be written in some coordinate $y_i$ as follows;

$g= \sum_{i, j=1}^{k} g_{ij} dy_i \odot dy_j  +  \sum_{i, j=k+1}^{n} g_{ij} dy_i \odot dy_j  $, where $\odot$ is the symmetric tensor product and, for the coframe field  $\omega_i$ dual to $F_i$,  $\sum_{i=1}^{k}\omega_i^2       =\sum_{i, j=1}^{k} g_{ij} dy_i \odot dy_j  $ and  $\sum_{i=k+1}^{n}\omega_i^2= \sum_{i, j=k+1}^{n} g_{ij} dy_i \odot dy_j  $.
Set $ \partial_i:= \frac{ \partial }{\partial y_i} $ and $ g_{ij}= g( \partial_i , \partial_{j}  )$.
 For $i, j \geq k+1$ and $ 2 \leq l  \leq k $,  we get $\langle \nabla_{\partial_{i}}  \partial_{j} ,  \partial_l \rangle =0$ from (\ref{y03}). So,
 \begin{eqnarray} \label{bbaa}
0 & =     \langle \nabla_{\partial_{i}}  \partial_{j} ,  \partial_l\rangle =   \sum_{m=1}^n \langle\Gamma^{m}_{ij}  \partial_m , \partial_{l} \rangle \nonumber \hspace{2.3cm} \\
 &=   \sum_{m, a=1}^n \langle \frac{1}{2} g^{ma}( \partial_i g_{a{j}}+ \partial_j g_{a{i}} - \partial_a g_{ij} )\partial_m ,\partial_l \rangle \hspace{0.4cm}   \nonumber   \\ & =- \sum_{m, a=1}^n  \frac{1}{2} g^{ma}  \partial_a g_{ij} \langle \partial_m , \partial_l  \rangle   =  -\frac{1}{2}  \partial_{l}( g_{ij}). \hspace{0.6cm}
\end{eqnarray}
Similarly,  from the totally umbilicality of $D_2$ by Lemma  \ref{abc60x} {\rm (iv)} and $\zeta_n =-  \langle \nabla_{F_i}F_i, F_1 \rangle  $,
\begin{eqnarray*}
\zeta_n  g_{ij}  =  - \langle \nabla_{\partial_{i}}  \partial_{j} ,  \frac{\partial }{\partial s}\rangle = -   \sum_{m=1}^n \langle\Gamma^{m}_{ij}  \partial_m ,  \frac{\partial }{\partial s}  \rangle = \frac{1}{2} \frac{\partial }{\partial s} g_{i{j}}.
\end{eqnarray*}
 Integrating it, we get $ g_{ij} =  \pm e^{C_{ij}} q(s)^2$ for a function $q=q(s)$ if $g_{ij} \neq 0$. Here the function $q>0$ is independent of $i,j$ and each function $C_{ij}$ depends only on $y_{k+1}, \cdots, y_n$ by (\ref{bbaa}). This shows that $\sum_{i=k+1}^{n}\omega_i^2= \sum_{i, j=k+1}^{n} g_{ij} dy_i \odot dy_j  =q(s)^2\tilde{g_2}$ for some Riemannian metric $\tilde{g_2}$ on a domain with coordinates  $y_{k+1}, \cdots, y_n$.
  By similar argument we can show  $\sum_{i=2}^{k}\omega_i^2  =  p(s)^2\tilde{g_1}$ for a function $p(s)$ and a Riemannian metric $\tilde{g_1}$  on a domain with coordinates  $y_{2}, \cdots, y_k$.

As $F_1 = \nabla s$ by Lemma \ref{threesolbx},  $ds = g(F_1, \cdot )$. So, $\omega_1 = ds$ and we obtain  $g= ds^2 +   p(s)^2 \tilde{g_1} +   q(s)^2 \tilde{g_2} $.
Finally, $\tilde{g_i} $ can be proved to be Einstein mimicking the proof of  Lemma 5.1  in \cite{Ki}. The last clause of Lemma holds clearly.
\end{proof}

We set $a:=  \zeta_2 $ and $b :=  \zeta_n $. One can check that $a = \frac{p'}{p} $ and  $b  = \frac{q' }{q} $.  If we write the Ricci tensors of Einstein metrics $\tilde{g}_i$ in Lemma \ref{77b1} as $r^{\tilde{g}_1} =  (k-2) k_2 \tilde{g}_1$ and    $r^{\tilde{g}_2} =  (n-k-1) k_n \tilde{g}_2$, for numbers $k_2$ and $k_n$,
then the Ricci tensor components $R_{ij} = R(F_i, F_j)$ of $g$ can be directly computed as follows.
  \begin{eqnarray} \label{560}
    R_{11}=-(n-1)(a'+a^2), \hspace{6.5cm}\nonumber  \\
    R_{ii}=-a'-(k-1)a^2-(n-k)ab+  \frac{(k-2)}{p^2} k_2, \ \ \ {\rm for} \  2 \leq  i \leq k, \hspace{0.6cm}  \\
    \ \ \ \ \  \ \ R_{jj}=-b'-(n-k)b^2-(k-1)ab+  \frac{(n-k-1)}{q^2} k_n, \  \ {\rm for} \  k+1 \leq j \leq n. \nonumber
  \end{eqnarray}
Setting $P:= \frac{(k-2)}{p^2} k_2$ and $H:= \frac{(n-k-1)}{q^2} k_n$,   the formula (\ref{x1z}) gives
\begin{eqnarray}
  -a^{'}-(k-1)a^2-(n-k)ab+  P=-a f^{'} + \lambda,   \label{fromwpe} \\
 -b^{'}-(n-k)b^2-(k-1)ab+  H=-b f^{'} + \lambda, \ \label{fromwpebb}
\end{eqnarray}





We see that $P^{'} = -2 a P$ and $H^{'} = -2 b H$.
From  (\ref{rii}) and   (\ref{560}),    we get $X_2 = P$ and $X_n = H$.  Then  (\ref{riis}) gives
 \begin{eqnarray} \label{riis2}
f^{'}   = ( k-2)a  + ( n-k -1 ) b    +\frac{H-P }{ (a -b) }.
\end{eqnarray}
Put it into (\ref{fromwpe}) and get



\begin{eqnarray} \label{k4}
\lambda +a^{'}+ a^2+ ab  + \frac{ a  }{  b -  a } H -\frac{ b  }{   b -  a }P
   =0.
\end{eqnarray}
Take derivative $\frac{d}{ds}$ on (\ref{riis2}),  and use $f^{''}  = \lambda - \lambda_1= \lambda + (n-1) (a^{'} + a^2 )$ and $b^{'} = a^{'} + a^2 -b^2$ to get
 \begin{eqnarray} \label{riis24t}
\lambda +2a^{'} +ka^2 +( n-k -1 ) b^2   -P-H =0.
\end{eqnarray}
To remove $P$ and $H$, we compute $(a+ b) \times$(\ref{riis2})$-2 \times$(\ref{k4})$+$(\ref{riis24t}) to obtain;
 \begin{eqnarray} \label{riis24i}
(a+b) f^{'} =( n-1) ab +\lambda.
\end{eqnarray}

 Take derivative $\frac{d}{ds}$ on (\ref{riis24i})  and use $f^{''} = \lambda + (n-1) (a^{'}+a^2  )$ to get

$ (2a^{'}+a^2 - b^2) f^{'} + (a+b) (\lambda + (n-1) (a^{'}+a^2  ))  =( n-1)\{ a^{'}b +(a^{'}+ a^2 - b^2 ) a\}.$

\smallskip
\noindent Multiply the above by $(a+b)$ and use  (\ref{riis24i}) to get
 \begin{eqnarray} \label{riis24ip}
(a^{'}+a^2 + ab  ) \{ ( n-1) ab +\lambda \}=0.
\end{eqnarray}

\medskip
\begin{lemma} \label{apb}
It holds that $a+b \neq 0$.
\end{lemma}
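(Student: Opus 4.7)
The plan is to argue by contradiction, assuming $a+b \equiv 0$ on the open set $V$ of Lemma \ref{77b1}. By Lemma \ref{gg} we may exclude any $\zeta_i = 0$ throughout this section (otherwise $g$ would split off a Ricci-flat factor and be handled separately), so $a = \zeta_2 \neq 0$, hence $b = -a \neq 0$. In particular $a \neq b$, so the denominators appearing in (\ref{k4}) and (\ref{riis2}) remain valid.

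The first substantive step is to show that $a$ is constant. Applying (\ref{x2}) to $i=2$ and $i=n$ yields $a' + a^2 = b' + b^2$. Substituting $b = -a$, $b' = -a'$, $b^2 = a^2$ gives $a' + a^2 = -a' + a^2$, so $a' = 0$. Thus $a$ and $b = -a$ are both nonzero constants. Next, (\ref{riis24i}) with $a+b = 0$ forces its right-hand side to vanish, yielding $\lambda = -(n-1)ab = (n-1)a^2$, a nonzero constant. Plugging $a' = 0$, $b^2 = a^2$ and this value of $\lambda$ into (\ref{riis24t}) reduces it to $P + H = (n-1)a^2 + \lambda = 2\lambda$.

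To extract a contradiction I would exploit the explicit $s$-dependence of $P$ and $H$. Since $p'/p = a$ is a nonzero constant, $p(s) = p(0) e^{as}$ and $q(s) = q(0) e^{-as}$, so from $P = (k-2)k_2/p^2$ and $H = (n-k-1)k_n/q^2$ one reads off $P(s) = P(0) e^{-2as}$ and $H(s) = H(0) e^{2as}$. The identity
\[
2\lambda \;=\; P(0) e^{-2as} + H(0) e^{2as}
\]
must therefore hold on an open $s$-interval. Since $a \neq 0$, the three functions $1,\ e^{-2as},\ e^{2as}$ are linearly independent, forcing $P(0) = H(0) = 0$ and hence $\lambda = 0$, which contradicts $\lambda = (n-1)a^2 \neq 0$.

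I do not anticipate a serious obstacle: the hypothesis $a+b = 0$ simultaneously forces constancy of $a$ (through (\ref{x2})) and the specific value $\lambda = (n-1)a^2$ (through (\ref{riis24i})), while $P$ and $H$ must depend exponentially on $s$, and these two requirements are incompatible. The only mild care is to ensure the linear-independence step really has room to operate, which is guaranteed because $V$ is open and $a$ is a nonzero constant, so the relation is enforced on a genuine open interval of $s$-values.
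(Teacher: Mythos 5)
Your proof is correct and follows essentially the same route as the paper: both deduce $a'=0$ from $a'+a^2=b'+b^2$, obtain $\lambda=-(n-1)ab=(n-1)a^2\neq 0$ from (\ref{riis24i}), and derive $P+H=2\lambda$ (the paper from (\ref{k4}), you from (\ref{riis24t})), after which the relations $P'=-2aP$, $H'=2aH$ give the contradiction --- you by integrating them and invoking linear independence of $1, e^{-2as}, e^{2as}$, the paper by first concluding $P=H=\lambda$ and then computing $f''=0=(n-1)a^2+\lambda$. One small point: citing Lemma \ref{gg} to rule out $a=0$ is not quite the right justification; the clean argument is that $a+b=0$ together with $ab=0$ forces $a=b=0$, hence $\zeta_2=\zeta_n$ and so $\lambda_2=\lambda_n$ by (\ref{x1z}), contradicting the standing hypothesis of exactly two distinct eigenvalues among $i\geq 2$.
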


\begin{proof}
If  $a+b =0$, then from  $a^{'} +  a^2   =  b^{'}   + b^2$, we get $a^{'}    =  b^{'}=0$. If $ab=0$ further, then $a=b=0$, a contradiction to $ \lambda_2 \neq \lambda_n$. So, $ab \neq 0$.
 Then (\ref{k4}) gives $P+H = 2 \lambda$.  And $0= P^{'}+H^{'}  =  -2aP -2bH = -2a(P-H)$ so that $P=H=\lambda$.  By (\ref{riis2}), $f'$ is a constant. Now, $ 0= f^{''} = -R_{11}+ \lambda= (n-1)a^2+ \lambda$.
 By  (\ref{riis24i}),
  $ -(n-1)a^2 +\lambda= 0.$ So, $a=0$, a contradiction. This proves the lemma.
\end{proof}

If  $( n-1) ab +\lambda=0$ in  (\ref{riis24ip}), then $a+b =0$ by  (\ref{riis24i}), which cannot occur.
 We get
 \begin{eqnarray} \label{riis24ip2}
a^{'}+a^2 + ab =0.
\end{eqnarray}

\bigskip

If $a=0$ or $b=0$, we shall use Lemma \ref{gg}.
We now assume  that $a \neq 0$ and $b \neq 0$.
As $a = \frac{p'}{p} $ and  $b  = \frac{q' }{q} $,   (\ref{riis24ip2}) gives
  $qp^{''} + q^{'} p^{'} =0$. Then,
 \begin{eqnarray}
p^{'}= \frac{c_1}{q}, \ \ {\rm for \  a \ constant} \ c_1 \neq 0. \label{fromwpe36}
\end{eqnarray}
 By $a^{'}  +  a^2 = b^{'}  +  b^2  $, we also get

  \begin{eqnarray}
q'= \frac{c_2}{p}, \ \ {\rm for \  a \ constant} \ c_2 \neq 0. \label{fromwpe37}
\end{eqnarray}
 Then  $ \frac{c_2}{c_1}\frac{p'}{p} = \frac{q'}{q} $, i.e.  $ c_3 a = b$, $c_3 \neq 0$.
 Note that $c_3 \neq -1$ by Lemma \ref{apb}.


From (\ref{riis24ip2}) we get $a^{'} = -(1+c_3)a^2   $.
So,  $ a = \frac{p^{'}}{p}= \frac{1}{(1+c_3) s}$,  after a translation of $s$ by a constant.
 We have $p= c_5  s^{\frac{1}{1+c _3}} $ and
$p'= \frac{c_5}{1+ c_3} s^{\frac{1}{1+c _3} -1 }   $ for a constant $c_5 \neq 0$.
From (\ref{riis24i}), $ a f^{'}=\frac{(n-1) c_3 a^2+     \lambda}{  (1+ c_3 )}$. We put all these into (\ref{fromwpe}) and  get

$
 Q(c_3) (p^{'} )^2+  (k-2) k_2(1+ c_3 )=c_3     \lambda p^2, $ where $ Q(c_3):= (1-n+k) c_3^2 + 2 c_3 + 2-k.$
 Differentiating this and dividing by $2pp^{'}$  gives
$  Q(c_3)\frac{ p^{''}}{p} =c_3 \lambda  $ and
$  Q(c_3)   =  - \lambda  (1+c_3)^2 s ^{2}$. Comparing both sides of the latter equality, $\lambda=0$ and $ Q(c_3) =0$. Viewing $ Q(c_3) =0$ as a quadratic equation of $c_3$, its discriminant $D$ equals
$D= 4\{1 - (n-k-1) (k-2)\}$.  As $D$ must be nonnegative and  $k$ is an integer with $2 \leq k \leq n-1$, so either
 $k=3$ and $n-k=2$,  or    $ (n-k-1)(k-2)=0$.

\smallskip
If $k=3$ and $n-k=2$, then $ Q(c_3)=0$ becomes $ - c_3^2 + 2 c_3 -1 =0$, so $c_3 =1$.  But then  $ a = b$, a contradiction to  $ \lambda_2 \neq \lambda_n$.

Therefore  $ (n-k-1)(k-2)=0$, i.e. $n-k-1=0$ or  $k=2$; these two cases are similar, so we may treat the $k=2$ case only. Then $Q(c_3)=0$ gives $  c_3 = \frac{2}{n-3}$.
If $n=5$, then $c_3=1$ and $a=b$, a contradiction. So, we need $n \neq  5$.   Now $p= c_5  s^{\frac{n-3}{n-1}} $  and
$q=c_6 s^{\frac{2}{n-1}} $ for a constant $c_6 \neq 0$ from (\ref{fromwpe36}). As $k=2$, we get $g= ds^2   + s^{\frac{2({n-3})}{{n-1}}} dx_2^2+  s^{\frac{4}{{n-1}}} \tilde{g}_2$ after absorbing the constants $c_5$ and $c_6$ into $dx_2^2$ and  $\tilde{g}$, respectively.
As $\tilde{g}_1$ is $1$-dimensional, $k_2=P=0$. Put (\ref{riis24ip2}) and $\lambda=0$ into   (\ref{k4}) to get
$ H  =  k_n=0$. So, $\tilde{g}_2$ is a Ricci flat metric.
 (\ref{riis2}) gives  $ f^{'}=\frac{2(n-3)}{(n-1) s}$,   so that $f = \frac{2(n-3)}{{n-1}}\ln|s| $ modulo a constant.



\medskip
We are going to summarize the above discussion of this section.

\begin{prop} \label{prop3}
Let $(M^n,g,f)$, $n \geq 4$, be an $n$-dimensional (not necessarily complete) gradient Ricci soliton with harmonic Weyl curvature and exactly  two distinct $\lambda_i$'s  among $i \geq 2$ at each point of  $ M$.

Then, in a neighborhood $V$ of each point in the open dense subset $  M_r \cap \{ \nabla f \neq 0  \}$,
there are  some coordinates $(x_1 := s, x_2, \cdots, x_n)$ with $ \nabla s = \frac{\nabla  f}{|\nabla f|}$, in which
one of the following two cases holds;

{\rm (i)} $(V, g)$ isometric to
the Riemannian product $ (N_1^{k}, g_1 ) \times (N_2^{n-k}, g_2)$ of a $k$-dimensional Riewmannian manifold  $(N_1^{k}, g_1 ) $ with $x_1, \cdots,  x_k$ coordinates, and  an $(n-k)$-dimensional Riewmannian manifold  $(N_2^{n-k}, g_2)$  with  $x_{k+1}, \cdots,  x_n$ coordinates,.
Here $g_1$ is Ricci flat and $r_{g_2} =  \lambda g_2 $, $2 \leq k \leq n-2$. Moreover $f$  is the pull-back of a function  on $N_1^k$, denoted by $f$ again, and    $ (N_1^{k}, g_1, f )$  is itself a gradient Ricci soliton and
 $f = \frac{\lambda s^2}{2}$ modulo a constant.


{\rm  (ii)}  We need  $n \neq 5$ and
\begin{equation} \label{metricg3}
g|_{V}= ds^2   + s^{\frac{2({n-3})}{{n-1}}} dx_2^2+  s^{\frac{4}{{n-1}}} \tilde{g},
 \end{equation}
 where ${\tilde{g}}$ is a Ricci flat metric on an $(n-2)$-dimensional manifold $N$ with coordinates $x_3, \cdots, x_n$.
 Moreover, $\lambda=0$ and $f=\frac{2(n-3)}{{n-1}} \ln |s| $ modulo a constant.
\end{prop}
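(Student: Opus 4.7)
The plan is to combine the structural result of Lemma \ref{77b1} with the GRS equation to reduce the problem to an ODE system in $a := \zeta_2 = p'/p$ and $b := \zeta_n = q'/q$. First I would invoke Lemma \ref{77b1} at any point of the open dense set $M_r \cap \{\nabla f \neq 0\}$ to write $g = ds^2 + p(s)^2\tilde{g}_1 + q(s)^2 \tilde{g}_2$ with $\tilde{g}_1, \tilde{g}_2$ Einstein. A direct computation yields the Ricci components (\ref{560}); matching these against $R_{ii} = \lambda - \zeta_i f'$ produces the two Einstein-type identities (\ref{fromwpe}) and (\ref{fromwpebb}), while Lemma \ref{77bx01} identifies $X_2 = P$ and $X_n = H$, so that (\ref{riis}) gives the closed form (\ref{riis2}) for $f'$.

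Next I would eliminate $f'$, $P$, $H$ by suitable linear combinations. Forming $(a+b)\cdot$(\ref{riis2})$-2\cdot$(\ref{k4})$+$(\ref{riis24t}) produces the clean relation $(a+b)f' = (n-1)ab + \lambda$; differentiating this and substituting $f'' = \lambda + (n-1)(a' + a^2)$ collapses everything to $(a' + a^2 + ab)\{(n-1)ab + \lambda\} = 0$. A short argument (Lemma \ref{apb}) rules out $a+b \equiv 0$ (which would force $a = b = 0$, contradicting $\lambda_2 \neq \lambda_n$), and then the second factor cannot vanish either, so I am forced into the fundamental ODE relation $a' + a^2 + ab = 0$.

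At this point I would split cases. If $a \equiv 0$ or $b \equiv 0$ on some open set, Lemma \ref{gg} delivers case (i) directly. Otherwise I may assume $a, b$ are both nonzero; then $a' + a^2 + ab = 0$ together with $a' + a^2 = b' + b^2$ yields $p' = c_1/q$ and $q' = c_2/p$ with $c_1, c_2 \neq 0$, whence $b = c_3 a$ for some constant $c_3 \neq 0, -1$. Integrating $a' = -(1+c_3)a^2$ (after a harmless translation of $s$) gives $p = c_5 s^{1/(1+c_3)}$, and feeding this back into (\ref{fromwpe}) along with $af' = \{(n-1)c_3 a^2 + \lambda\}/(1+c_3)$ produces an algebraic identity of the form $Q(c_3)(p')^2 + (k-2)k_2(1+c_3) = c_3 \lambda p^2$ with $Q(c_3) := (1-n+k)c_3^2 + 2c_3 + 2 - k$. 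Differentiating in $s$ and comparing the two sides as explicit functions of $s$ forces $\lambda = 0$ and $Q(c_3) = 0$.

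The main obstacle is the final case analysis of $Q(c_3) = 0$: its discriminant is $4\{1 - (n-k-1)(k-2)\}$ and reality of $c_3$, combined with $c_3 \neq 1$ (else $a = b$, contradicting $\lambda_2 \neq \lambda_n$), narrows things to either $k = 3$ and $n-k = 2$ (which however yields $c_3 = 1$) or $(n-k-1)(k-2) = 0$. Using the obvious symmetry between the two blocks I may take $k = 2$, giving $c_3 = 2/(n-3)$ and requiring $n \neq 5$. With $k = 2$ one has $k_2 = 0$ automatically, and plugging $\lambda = 0$ and $a' + a^2 + ab = 0$ back into (\ref{k4}) forces $H = 0$, hence $k_n = 0$ and $\tilde{g}_2$ is Ricci flat. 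Reading off $p = c_5 s^{(n-3)/(n-1)}$ and $q = c_6 s^{2/(n-1)}$ and absorbing $c_5, c_6$ into $dx_2^2$ and $\tilde{g}$ gives (\ref{metricg3}), while integrating $f' = 2(n-3)/((n-1)s)$ from (\ref{riis2}) produces $f = \frac{2(n-3)}{n-1}\ln|s|$ modulo a constant, completing case (ii).
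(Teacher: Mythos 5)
Your proposal is correct and follows essentially the same route as the paper: Lemma \ref{77b1} to get the doubly warped form, the same eliminations leading to $(a+b)f'=(n-1)ab+\lambda$ and then $(a'+a^2+ab)\{(n-1)ab+\lambda\}=0$, Lemma \ref{apb} and Lemma \ref{gg} for the degenerate cases, and the identical $Q(c_3)$ discriminant analysis forcing $\lambda=0$, $k=2$, $n\neq 5$. The only compression is your parenthetical justification of $a+b\neq 0$ (the actual argument needs the $P$, $H$ bookkeeping, not just $a=b=0$ directly), but since you invoke the lemma itself this is not a gap.
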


\begin{proof}

Each point in the open dense subset $M_r \cap \{ \nabla f \neq 0  \} =  \{ \nabla f \neq 0  \}$ has a neighborhood
on which there is a refined adapted  frame field $F_i$ by Proposition \ref{pop1}. We may then use Lemma \ref{77b1} and the arguments afterwards;
after we have got (\ref{riis24ip2}), we have treated the case when $a \neq 0$ and $b \neq 0$ and  obtained (ii).
In the other case when $a=0$ or $b=0$, we can just use  Lemma \ref{gg} to get (i) ; here we have  $2 \leq k \leq n-2$ because there will be exactly  one  distinct $\lambda_i$  among $i >1$ if  $k=1$.

\end{proof}


\noindent {\bf  Proof of Theorem \ref{locals}:}  If $f$ is a constant, then $(M, g)$ is an Einstein manifold. If $f$ is not a constant,
the subset $M_r \cap \{ \nabla f \neq 0  \} $ is open and  dense in $M$. Near each point $p_0$ in  $M_r \cap \{ \nabla f \neq 0  \} $, there is an adapted frame field $E_i$. By Proposition \ref{pop1} there exists a refined  frame field $F_i$ on a neighborhood $U$ of $p_0$.
By Proposition \ref{pro3},  $(U, g, f)$ has less than three distinct $\lambda_i$'s among $i \geq 2$ at each point.
By  Proposition \ref{prop3}, when $(U, g, f)$ has
exactly  two distinct $\lambda_i$'s  among $i \geq 2$ at each point, we obtain (ii) and (iv) of Theorem \ref{locals}.

If $(U, g, f)$ has
exactly  one distinct $\lambda_i$  among $i \geq 2$ at each point, then either $\lambda_1 = \lambda_2 = \cdots  = \lambda_n$, or  $\lambda_1 \neq  \lambda_2 = \cdots  = \lambda_n$.  In the former case, it is Einstein and from the section 1 of \cite{CC}, $g$ can be written as
$g=  ds^2 +   ( h(s))^2 \tilde{g}$, where $h(s)=  f^{'}(s)$ and  $\tilde{g}$ is Einstein. So, this case belongs to (iii).
 In the latter case, we can easily get (iii) from the argument  in  the proof of  \cite[Proposition 7.1]{Ki}. This finishes the proof of Theorem \ref{locals}.


\section{Steady and expanding gradient Ricci solitons with harmonic Weyl curvature}

In this section we prove Theorem  \ref{steady} and Theorem \ref{expand}.  Theorem \ref{locals} shows all the possible four types (i)$\sim$(iv) of regions in
 a  gradient Ricci soliton $(M, g, f)$  with harmonic Weyl curvature.
If $(M, g, f)$ has an open region of type (i), then by real analyticity $f$ is constant on $M$,  and
$M$ itself is a type-(i) region.

For the next lemmas, near a point $p $ with $\nabla f (p)  \neq 0$, we recall the curve $\theta_p (t) := \theta(p, t) $  defined by $\theta_p(0) =p$ and $\theta_p^{\prime} (t) = \frac{\nabla f}{|\nabla f|} $.  By   Lemma \ref{threesolbx} (vi), $\theta_p (t) $ is a geodesic and can be extended for $t \in \mathbb{R}$ when $g$ is complete.  As a map, $\theta_p: \mathbb{R} \rightarrow M$ is real analytic from the geodesic equation.

\begin{lemma} \label{19}
A  complete gradient Ricci soliton $(M, g, f)$ with harmonic Weyl curvature cannot contain a type-{\rm (iv)} region of  Theorem \ref{locals}.
\end{lemma}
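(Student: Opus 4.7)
The plan is to contradict completeness by showing that, along the $E_1$-geodesic heading into the singular end $\{s=0\}$ of a type-{\rm (iv)} region, the potential $f$ tends to $-\infty$ in finite affine time.

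More precisely, suppose for contradiction that $V \subset M$ is an open subset on which the type-{\rm (iv)} form of Theorem \ref{locals} holds. Then $\lambda = 0$, and on $V$ there are coordinates $(s, x_2, \ldots, x_n)$ (WLOG $s > 0$) in which
\[
g|_V = ds^2 + s^{2(n-3)/(n-1)}\,dx_2^2 + s^{4/(n-1)}\,\tilde g, \qquad f|_V = c\,\ln s + \mathrm{const},
\]
with $c := 2(n-3)/(n-1) > 0$ (since $n \geq 4$ and $n \neq 5$). Pick $p \in V$, set $s_0 := s(p) > 0$, and let $\sigma(t)$ be the integral curve of $-E_1 = -\nabla f / |\nabla f|$ starting at $p$. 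By Lemma \ref{threesolbx}(vi) we have $\nabla_{E_1} E_1 = 0$, so $\sigma$ is a geodesic of $g$; by completeness of $(M, g)$ it extends to $\sigma: \mathbb{R} \to M$, and in particular $\sigma(s_0) \in M$ is well-defined and $f(\sigma(s_0))$ is finite.

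The heart of the argument is to observe that the composite $f \circ \sigma : \mathbb{R} \to \mathbb{R}$ is real analytic: $f$ is real analytic on $M$ in harmonic coordinates (as noted in the Preliminaries, citing \cite{Iv}), and $\sigma$ is a solution of the geodesic ODE whose coefficients (the Christoffel symbols of $g$) are real analytic, so $\sigma$ is itself real analytic as a map into $M$. On the open interval $(0, t_1)$ on which $\sigma$ remains inside $V$, the explicit type-{\rm (iv)} description forces $s(\sigma(t)) = s_0 - t$ and therefore
\[
(f \circ \sigma)(t) = c\,\ln(s_0 - t) + \mathrm{const}.
\]
Both sides are real analytic on the connected open interval $(-\infty, s_0)$ and agree on the nonempty open subset $(0, t_1)$, so by the identity theorem for real analytic functions they agree throughout $(-\infty, s_0)$. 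Letting $t \to s_0^-$ then gives $(f \circ \sigma)(t) \to -\infty$, contradicting the continuity of $f \circ \sigma$ at the interior point $t = s_0$.

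The main technical point to watch is this analytic-continuation step: one must know both that $\sigma$ is real analytic (via ODE regularity applied to the real-analytic geodesic equation) and that $f$ is real analytic on $M$ (from \cite{Iv}). Once these are in place, the identity theorem forces the explicit logarithmic formula along all of $\sigma$ on $(-\infty, s_0)$ without any need to track whether $\sigma$ remains in the original chart $V$, and completeness then produces the contradiction at $t = s_0$.
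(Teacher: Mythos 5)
Your proposal is correct and follows essentially the same route as the paper: both extend the unit-speed gradient geodesic to all of $\mathbb{R}$ by completeness, observe that $f$ composed with this real analytic geodesic is real analytic on $\mathbb{R}$, and derive a contradiction from the fact that this composition equals $\frac{2(n-3)}{n-1}\ln|t+c|$ plus a constant on an open interval, which cannot extend analytically past the logarithmic singularity. The paper merely states ``by the real analyticity of $h$ this is a contradiction,'' whereas you spell out the identity-theorem step explicitly; the content is the same.
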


\begin{proof}  Suppose that   $(M, g, f)$  contains an open region $V$ of type {\rm (iv)}. Near $p \in V$,
we can write
  $g= ds^2   + s^{\frac{2({n-3})}{{n-1}}} dx_2^2+  s^{\frac{4}{{n-1}}} \tilde{g}
$
and  $f=\frac{2(n-3)}{{n-1}} \ln(| s|) $ modulo a constant. From the proof of  Theorem \ref{locals}, $s$ is a function with $\nabla s =  \frac{\nabla f}{ |\nabla f|}$.
  Consider the  function $h(t) := f  (\theta_p(t))$ for $t \in \mathbb{R}$.
 As  $\frac{d}{dt}  \theta_p (t)= \nabla s ( \theta_p (t) ) $, $s( \theta_p(t) ) =t +c $ for   a constant $c$ when $t \in I$ for an open interval $I$.
So, $h(t)=\frac{2(n-3)}{{n-1}} \ln|t+c| $ modulo a constant on $I$.
By the  real analyticity of $h$ on $\mathbb{R}$,
this is a  contradiction. We proved the lemma.
\end{proof}

 \begin{lemma} \label{lcf1v}
For a connected complete  gradient Ricci soliton $(M, g,f)$ with harmonic Weyl curvature, suppose that  $M$ contains  a type-{\rm (iii)} region. Then the followings   hold.

{\rm (i)} For any point $p \in M$ with $\nabla f (p) \neq 0$,  there are numbers $\tilde{a}<0<\tilde{b} $
such that    $\nabla f  \neq 0$  on $U:=\theta (\Sigma_c^0 \times (\tilde{a}, \tilde{b}) ):=\{ \theta_{q}(t)  \ |  \ q  \in \Sigma_c^0, \    t \in  (\tilde{a}, \tilde{b})  \}$ where $\Sigma_c^0$ is  the connected component of $f^{-1}(c)$ containing $p$.

{\rm (ii)} The region $U$ in {\rm (i)} is the connected component of $f^{-1}((a, b))$ which contains $p$, where  $a=f( \theta_p (\tilde{a} ))  $ and $b=  f( \theta_p (\tilde{b} )) $.

{\rm (iii)} On $U$ we have
\begin{equation} \label{metr}
g= ds^2 +    h(s)^2 \tilde{g}   \ \ \   \  \  s \in I=(\tilde{a},  \tilde{b}),
\end{equation}
where $h$ is a smooth positive function on $ I$ and $\tilde{g}$ is  an Einstein metric on $\Sigma_c^0$.

\end{lemma}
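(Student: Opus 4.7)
\medskip
\noindent For part (i), I would fix $p$ with $\nabla f(p)\neq 0$, set $c:=f(p)$, and let $\Sigma_c^0$ be the connected component of $f^{-1}(c)$ through $p$. By Lemma~\ref{threesolbx}, $|\nabla f|$ equals a positive constant $\alpha_c$ on $\Sigma_c^0$ and $E_1:=\nabla f/|\nabla f|$ is a unit geodesic vector field, so completeness of $g$ extends each integral curve $\theta_q$, $q\in\Sigma_c^0$, to a unit-speed geodesic on all of $\mathbb{R}$. Take $(\tilde a,\tilde b)$ to be the maximal open interval about $0$ on which $\nabla f(\theta_p(t))\neq 0$. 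The crucial identity is $[E_1,X]f = -X(|\nabla f|) = 0$ for $X$ tangent to level sets of $f$, which holds because Lemma~\ref{threesolbx}(ii) makes $|\nabla f|$ locally a function of $f$ alone. Hence $\theta_t$ carries level-set-tangent vectors to level-set-tangent vectors, so $\theta_t(\Sigma_c^0)$ lies in a single level set of $f$ on which $|\nabla f|$ equals the positive constant $|\nabla f|(\theta_p(t))$ for each $t\in(\tilde a,\tilde b)$. Therefore $U:=\theta(\Sigma_c^0\times(\tilde a,\tilde b))\subset\{\nabla f\neq 0\}$, and since $d\theta$ has full rank (its image contains $E_1$ and the level-set-tangent directions), $\theta$ is a diffeomorphism onto the open set $U$.

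\medskip
\noindent For part (ii), I would let $V$ be the component of $f^{-1}((a,b))$ containing $p$, so that $U\subset V$, and show $U$ is closed in $V$ to obtain equality by connectedness. If $q_n=\theta_{t_n}(r_n)\in U$ converges to $q\in V$, then $f(q_n)=f(\theta_p(t_n))\to f(q)\in(a,b)$, and since $f\circ\theta_p:(\tilde a,\tilde b)\to(a,b)$ is a strictly monotone homeomorphism, $t_n\to t_0\in(\tilde a,\tilde b)$; continuity of the geodesic flow then yields $r_n\to r_\infty:=\theta_{-t_0}(q)$, and $|\nabla f|(r_\infty)=\lim|\nabla f|(r_n)=\alpha_c>0$, so $r_\infty\in\{\nabla f\neq 0\}$. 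Because $\Sigma_c^0$ is closed in the smooth hypersurface $f^{-1}(c)\cap\{\nabla f\neq 0\}$, this gives $r_\infty\in\Sigma_c^0$ and $q=\theta_{t_0}(r_\infty)\in U$. This shows $V\cap\{\nabla f\neq 0\}=U$; to rule out interior critical points of $f$ in $V$, I would invoke the local warped-product structure on $U$ and real analyticity of $f$, which together force critical values of $f$ to appear only at the endpoints of the $s$-interval $(\tilde a,\tilde b)$.

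\medskip
\noindent For part (iii), I would identify $U$ with $\Sigma_c^0\times(\tilde a,\tilde b)$ via $\theta$; since $\partial_s=E_1$ is unit and orthogonal to level sets, $g=ds^2+g_s$ with $g_s$ a smooth family of metrics on $\Sigma_c^0$. The hypothesis that $M$ contains a type-(iii) region, combined with real analyticity, forces the local classification of Theorem~\ref{locals} to be of type (iii) throughout the dense open set $\mathfrak{M}\cap U$, producing local representations $g_s=(h_{\mathrm{loc}}(s)/h_{\mathrm{loc}}(s_0))^2 g_{s_0}$. The ratio is chart-independent, so by real analyticity and connectedness of $\Sigma_c^0$ it extends to a positive smooth function $h(s)$ with $h(s_0)=1$ satisfying $g_s=h(s)^2\tilde g$ on $U$, where $\tilde g:=g_{s_0}$. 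Locally $\tilde g$ is a constant rescaling of the Einstein metric $\tilde g_{\mathrm{loc}}$, hence $\tilde g$ is Einstein on $\Sigma_c^0$, with globally constant Einstein constant by continuity on the connected $\Sigma_c^0$.

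\medskip
\noindent The hardest step will be (ii): one must ensure that the backward-flowed limit $\theta_{-t_0}(q)$ lands in the specific component $\Sigma_c^0$ rather than at a critical point of $f$ or on a different component of $f^{-1}(c)$, and then exclude interior critical points of $f$ in $V$. Both rest on completeness of $g$ (making the geodesic flow globally defined), on the Lie-bracket identity from the first paragraph (which makes $|\nabla f|$ a function of the flow parameter alone, guaranteeing $|\nabla f|(r_\infty)=\alpha_c$), and on real analyticity of $g$ and $f$.
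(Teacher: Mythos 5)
Your proposal is correct and follows essentially the same route as the paper, which disposes of (i) and (ii) in one line via Lemma \ref{threesolbx}(ii) plus completeness and proves (iii) by patching the local type-(iii) warped products over the dense set where the local classification applies; you simply supply more of the detail. The only wrinkle is in (ii): the claim that $r_n=\theta_{-t_n}(q_n)$ converges to $\theta_{-t_0}(q)$ presupposes that the unit vectors $E_1(q_n)$ converge, which is not known until after one shows $\nabla f(q)\neq 0$ --- but this is repaired by passing to a subsequential limit $v$ of $E_1(q_n)$ and noting that $r_\infty=\exp_q(-t_0 v)$ satisfies $|\nabla f|(r_\infty)=\alpha_c>0$ and lies in the closed component $\Sigma_c^0$, exactly as the ingredients you list already provide.
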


\begin{proof}
The statements (i) and (ii) hold readily from Lemma \ref{threesolbx} (ii) and the completeness of the Reimannian metric $g$.

As $M$ contains  a type-{\rm (iii)} region, $M$  cannot admit a region of  type-(i) or type-(iv).
It cannot have any type-(ii) region either, because of the continuity and the multiplicity  of Ricci eigenvalues.
So,  $M$ admits only type-(iii) regions whose union forms an open dense subset of $M$.

Suppose $ U \subset  M_{r}  $.  From the proof of Theorem  \ref{locals},  a neighborhood $B_p$ of each point $p$ in  $U \subset M_r \cap \{ \nabla f \neq 0  \} $ is a type-(iii) region where we can write  $g= ds^2 +    h(s)^2 \tilde{g} $ on $B_p$. As $s$ is well defined on $U$,  these local expressions of  $g$ match and we have
 (\ref{metr}) on $U$.

 When $ U  \not\subset  M_{r}  $, as $M_r \cap \{ \nabla f \neq 0  \} $ is dense in $M$, (\ref{metr}) still holds  on $U$ by  the smoothness and continuity. This yields (iii).
 \end{proof}

 Suppose $\tilde{b} < \infty$ in Lemma \ref{lcf1v} (iii). As $g$ is complete, when $s \rightarrow \tilde{b}-$,
 $h(s)$ in (\ref{metr})  is  bounded and the Ricci tensor component $ r(\nabla s, \nabla s) = - (n-1)\frac{{h}^{''}}{{h}}$
 is bounded so that   $h^{''}(s) $ is  bounded.
So,  $h(s)$ cannot fluctuate and
 $\lim_{s \rightarrow  \tilde{b}-} h(s) =h_0$ for some number $h_0 \geq 0$.

\begin{lemma} \label{geod55} Let $(M, g, f)$, $U $, $\Sigma_c^0$, $ (\tilde{a}, \tilde{b})$, $a$ and $b$  be  as in Lemma  \ref{lcf1v}  so that
$g= ds^2 +    h(s)^2 \tilde{g}$ on $U=\theta (\Sigma_c^0 \times (\tilde{a}, \tilde{b}) )$ where $\nabla f \neq 0$.
Suppose that $\tilde{b}< \infty $  and  $\nabla f =0$ on the set $W:=f^{-1}(b) \cap   \overline{U}$, where  $\overline{U}$ is the closure of $U$.
Then the following {\rm (i)} and  {\rm (ii)}   hold.

\medskip

 {\rm (i)}   If $h_0 := \lim_{s \rightarrow  \tilde{b}-} h(s)=0$, $\tilde{g}$ is a spherical metric and $W$ is a point, say $\{p_0\}$, so that $g$ is a smooth rotationally symmetric metric on  $U \cup \{ p_0 \}$.

{\rm (ii)} If $h_0 > 0$, then $W$ is an $(n-1)$-dimensional real analytic submanifold of $M$ and the geodesic $\theta_p (t)= \theta (p, t) $ with $p \in \Sigma_c^0$ meets $W$ perpendicularly at $t= \tilde{b}$.  Moreover, $W= \theta (\Sigma_c^0 \times  \{ \tilde{b} \} ) $ and
the  map $\theta : \Sigma_c^0 \times (- \infty, \infty) \longrightarrow M$  is real analytic  with a local real analytic inverse near each point of $\Sigma_c^0 \times  \{ \tilde{b} \}  $.

\end{lemma}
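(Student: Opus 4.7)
The plan is to analyze the map $\Phi : \Sigma_c^0 \times \mathbb{R} \to M$ defined by $\Phi(p,t) = \theta_p(t)$. By completeness of $g$, the real analyticity of the geodesic equation, and the fact that the initial data $(p, \nabla f(p)/|\nabla f(p)|)$ depends real analytically on $p \in \Sigma_c^0$ (a regular level set), $\Phi$ is real analytic on $\Sigma_c^0 \times \mathbb{R}$; on $\Sigma_c^0 \times (\tilde{a}, \tilde{b})$ it pulls $g$ back to $ds^2 + h(s)^2 \tilde{g}$, and by continuity the same identity holds at $s = \tilde{b}$.

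For (i), since $h(s) \to 0$ as $s \to \tilde{b}^-$, for any $p, q \in \Sigma_c^0$ we have $d_g(\theta_p(t), \theta_q(t)) \leq h(t)\, d_{\tilde{g}}(p, q) \to 0$. Completeness of $g$ then forces $\theta_p(t)$ to converge, as $t \to \tilde{b}^-$, to a point independent of $p$ (first for $p,q$ in a common neighborhood in $\Sigma_c^0$, then globally by connectedness), so $W = \{p_0\}$. For the smooth rotationally symmetric extension, I would work in normal polar coordinates centered at $p_0$: the smooth metric $g$ on the manifold $U \cup \{p_0\}$ takes the form $dr^2 + k(r)^2 g_{\mathbb{S}^{n-1}}$ near $p_0$ in geodesic polar coordinates, and its agreement on $U$ with $ds^2 + h(s)^2 \tilde{g}$ forces, via the standard smooth-extension criterion for warped products at a pole, that $\tilde{g}$ be the round unit sphere metric (hence spherical) and that $h$ vanish linearly at $\tilde{b}$.

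For (ii), because $h_0 > 0$, the pullback $\Phi^*g = ds^2 + h(s)^2 \tilde{g}$ remains a nondegenerate Riemannian metric at $s = \tilde{b}$; hence $\Phi$ has injective differential at every point of $\Sigma_c^0 \times \{\tilde{b}\}$, and the real analytic inverse function theorem yields a local real analytic inverse at each such point. The restriction $\Phi|_{\Sigma_c^0 \times \{\tilde{b}\}}$ is therefore a real analytic local embedding whose image is locally an $(n-1)$-dimensional real analytic submanifold of $M$. Perpendicularity of $\theta_p'(\tilde{b})$ to this image is immediate from the orthogonality of $\partial_s$ to the $\tilde{g}$-directions in the pulled-back warped product metric. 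For $W = \Phi(\Sigma_c^0 \times \{\tilde{b}\})$: the inclusion $\supset$ is clear since $\Phi(p,\tilde{b}) \in f^{-1}(b) \cap \overline{U}$. For the reverse, given $q \in W$ write $q = \lim_n \theta_{p_n}(t_n)$ with $t_n \to \tilde{b}^-$ (by monotonicity of $f$ along $\theta$-orbits and $f(\theta_{p_n}(t_n)) \to b$); then $d_g(\theta_{p_n}(\tilde{b}), q) \le |\tilde{b} - t_n| + d_g(\theta_{p_n}(t_n), q) \to 0$, and applying a local real analytic inverse of $\Phi$ near a fixed $\Phi(p^*, \tilde{b})$ close to $q$ exhibits $q$ as $\Phi(p^*, \tilde{b})$.

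The main obstacle is the rigidity claim in (i): passing from the Einstein condition on $\tilde{g}$ together with $h_0 = 0$ to the conclusion that $\tilde{g}$ is the \emph{round} sphere metric (not merely some Einstein metric) requires carefully comparing the given warped product form with the unique normal polar form at the pole $p_0$, which simultaneously pins down $\tilde{g}$ up to isometry and the linear vanishing of $h$. A secondary technical point in (ii) is that $\Sigma_c^0$ may be noncompact, so the set-theoretic identification of $W$ with $\Phi(\Sigma_c^0 \times \{\tilde{b}\})$ must be carried out through the local real analytic inverse rather than any compactness-based argument.
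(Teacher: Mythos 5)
Your proposal is correct in substance and, for part (ii), follows essentially the paper's route: view $\theta$ as the real analytic normal exponential map of the real analytic hypersurface $\Sigma_c^0$, note that the pulled-back metric $ds^2+h(s)^2\tilde{g}$ stays nondegenerate at $s=\tilde{b}$ because $h_0>0$, conclude that $d\theta$ is an isomorphism along $\Sigma_c^0\times\{\tilde{b}\}$, and invoke the real analytic inverse function theorem; perpendicularity is read off from the limiting warped product form. The one place you genuinely diverge is the inclusion $W\subseteq\theta(\Sigma_c^0\times\{\tilde{b}\})$: the paper takes $q\in W$, realizes $d(q,\Sigma_c^0)$ by a minimizing geodesic meeting $\Sigma_c^0$ perpendicularly at some $\tilde{p}_0$, traps that distance between $\tilde{b}$ and $\tilde{b}$ using the warped product form, and gets $q=\theta(\tilde{p}_0,\tilde{b})$ directly, whereas you use a local analytic inverse near a nearby image point. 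Your version has a small hole: the local chart only yields $q=\theta(p',t')$ for \emph{some} $(p',t')$ near $(p^*,\tilde{b})$ (not for the fixed $p^*$, and note that $\theta$ need not be injective near $\Sigma_c^0\times\{\tilde{b}\}$, cf.\ Example 1 of the paper), so you must still rule out $t'\neq\tilde{b}$. This is fixable --- by analytic continuation $f\circ\theta$ depends only on $t$ on the chart domain, since it does so on the open subset $\{t<\tilde{b}\}$, and the zero of the nonconstant analytic function $f\circ\theta(\cdot,t)-b$ at $t=\tilde{b}$ is isolated --- but as written the step is not justified. For part (i) the paper offers no argument beyond ``standard,'' and your contraction estimate $d_g(\theta_p(t),\theta_q(t))\le h(t)\,d_{\tilde{g}}(p,q)\to 0$ together with the smoothness criterion for a warped product at a pole is a perfectly adequate way to supply one.
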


\begin{proof}
(i) can be seen with standard argument.
To prove (ii), suppose $ h_0 >0$. We  can view $\theta $ as the normal exponential map of $\Sigma_c^0$, i.e.    $\theta : \Sigma_c^0 \times (- \infty, \infty) \longrightarrow M$.
 As $f$ is real analytic,  $\Sigma_c^0$ is a real analytic submanifold, and so the map   $\theta : \Sigma_c^0 \times (- \infty, \infty) \longrightarrow M$ is real analytic.

 Clearly $ \theta (\Sigma_c^0 \times  \{ \tilde{b} \} ) \subset W$.  For $q_1 \in W$, there exists a sequence of points $p_n $ in $U$ converging to $q_1$.
The distance $ d(q_1, \Sigma_c^0   )$ from $q_1$  to $\Sigma_c^0 $ is realized by a minimizing geodesic segment $\gamma$  with length $\tilde{d_0}$, which meets  $\Sigma_c^0$ perpendicularly at a point $\tilde{p}_0 \in \Sigma_c^0 $.  Due to (\ref{metr}),    $d(p_n, \Sigma_c^0   )  \leq  \tilde{b}$. So,  $\tilde{d_0} = d(q_1, \Sigma_c^0   )  \leq  \tilde{b} $.
We have  $q_1 = \theta (\tilde{p}_0, \tilde{d_0})
$.  Clearly  $ \tilde{d_0} \geq \tilde{b}$ so that  $ \tilde{d_0} = \tilde{b}$.
We get $q_1 =\theta (\tilde{p}_0, \tilde{b} )$. So,  $W = \theta ( \Sigma_c^0 \times \{ \tilde{b} \}    )$.


We shall compute the derivative of $\theta$ at $(p, u)  \in  \Sigma_c^0 \times (\tilde{a},  \tilde{b}]$, i.e.
 $d \theta_{(p. u)} : T_{ (p, u)  }( \Sigma_c^0 \times (\tilde{a}, \infty) )  \rightarrow  T_{   \theta{(p, u)}}  M $.
We define  curves $c_i(t)$, $i =1, \cdots, n$,  in  $ \Sigma_c^0 \times (\tilde{a}, \infty)$ so that  $c_i(0) =(p, u)$ and
$v_i :=c_i^{'} (0)$  forms a basis of $ T_{ (p, u)  }(\Sigma_c^0  \times  (\tilde{a}, \infty) )   $ as follows; set
 $c_1(t) =  (p, u+ t)  $  and  let  $c_i(t)$, $i\geq 2$, lie in $\Sigma_c^0 \times \{ u \}$.
  As  $g= ds^2 +    h(s)^2 \tilde{g}$ on $U$, writing $w_{i}^u:= d \theta_{(p, u)} ( v_i)$, from the first paragraph of Section 5 we get  $g( w_{i}^u,  w_{j}^u  )  = h(u)^2 \tilde{g}(v_i , v_j )  $ for $\tilde{a} <  u < \tilde{b}$ where $2 \leq i, j \leq n$ and $g( w_{1}^u,  w_{i}^u )  = 0 $.
 When $u \rightarrow \tilde{b}-$, as $\theta$ is $C^{\infty}$,  $w_{i}^u  $ converges to $w_{i}^{\tilde{b}}  $  and  $g( w_{i}^{\tilde{b}},  w_{j}^{\tilde{b}} )  = h_0^2  \tilde{g}(v_i , v_j )  $ and $g(w_{1}^{\tilde{b}}, w_{i}^{\tilde{b}}  )  =  \delta_{1i}  $. We clearly have $g( w_{1}^{\tilde{b}}, w_{1}^{\tilde{b}} )  = 1  $.
Then for $\tilde{a} < u \leq  \tilde{b}$,   $w_i^u =d \theta_{(p, u)} (v_i) $   form a  basis of $ T_{\theta{(p, u)}  } M $, so
 $d \theta_{(p,u)}$ is a linear isomorphism  for each point of $\Sigma_c^0 \times (\tilde{a}, \tilde{b}] $.
Applying the inverse function theorem in the real analytic category \cite{KP},
 the map  ${ \theta}: \Sigma_c^0 \times (\tilde{a}, \tilde{b}] \longrightarrow M$ is  real analytic  with  local real analytic inverse near each point ${\theta} (p, t)$ of $U  \cup W$. Now $W$ is an $(n-1)$-dimensional real analytic submanifold of $M$.
The Riemannian metric $g$ on $U$ can be extended and  written as $ ds^2 +    h(s)^2 \tilde{g}$  on $U \cup W.$
 As $\theta_p^{'} (t) $, $\tilde{a} < t < \tilde{b}$,  is perpendicular to  level surfaces of $f$, so  $\theta_p^{'} (\tilde{b}) $  is perpendicular to $W$ which is the $C^{\infty}$ limit of  level surfaces of $f$.  This proves (ii).
\end{proof}

We have stated and proved Lemma \ref{geod55}  only for $\tilde{b}$, but similar statements clearly  hold for $\tilde{a}$.
We note that, even if $h_0 >0$,  $W$ may or may not be a boundary component of $U \cup W$,   since $\theta$ restricted to $ \Sigma_c^0 \times \{ \tilde{b}\}$ may or may not be one-to-one.
 The next example serves as a guidance.

\medskip

{\bf Example 1}
For an $(n-1)$-dimensional Riemannian manifold  $(\Sigma, g_0)$, suppose that there is  a fixed-point free  $g_0$-isometry $\phi$ with $\phi^2 =$ Identity map.
Let
$\Phi :  \Sigma \times \mathbb{R}  \longrightarrow   \Sigma \times \mathbb{R}  $ be the  map
defined by $\phi (x, s)  = ( \phi(x), -s)$.
$\Phi$ is an isometry  with respect to the product Riemannian metric $G = g_0 + ds^2$.  Let $M$ be the $\mathbb{Z}_2$-quotient Riemannian manifold $\Sigma \times \mathbb{R}  /  \Phi $ and $\pi$ be the projection $\pi : \Sigma \times \mathbb{R}  \longrightarrow M$.  Set the function  $f(x, s) = -s^2$ well defined on $M$. Set $U_0 :=\Sigma \times (-\infty,  0) $ and $V_0 :=\Sigma \times (0,  \infty) $ and $U= \pi( U_0)$,  $V= \pi( V_0)$ and $W= \Sigma \times \{0\}/ \Phi  $. In $M= U \cup W \cup V$, we have  $U=V$.   We consider the local one-parameter group action $\theta(p, t)$ associated  to  $ \frac{  \nabla f }{|  \nabla f |}$ where $p  \in ( \Sigma \times \{ -1\}) /  \Phi \subset U$.  $\theta_p(t)$ is a geodesic.
 On $U_0$ representing $U$, $ \frac{  \nabla f }{|  \nabla f |} = \nabla s$, and writing $p$ by $(q, -1)$ with $q \in \Sigma$, we get  $\theta_p(t) = (q, -1 +t)$ for $t <1$.  The geodesic curve $\theta_p(t)$ meets $\Sigma \times \{0\}$ perpendicularly at $(q, 0)$.
 On $V_0$ representing $V$, $ \frac{  \nabla f }{|  \nabla f |} =- \nabla s$.
The  submanifold $W$ is not a boundary submanifold of $U \cup W=M$, but lies in the interior of $M$.

\bigskip
  We recall the {\bf  Weierstrass preparation theorem}  \cite{KP}: Let  $f(x_1 , \cdots ,x_k) $ be a  real analytic function in a neighborhood of the origin in $\mathbb{R}^k$ and assume that  $f(0, \cdots, 0, x_k )\neq 0.$ Then $f$ may be written in the form $f= H \cdot G$, where $G$ does not vanish in a neighborhood of the origin and a distinguished polynomial $H$ is of the form $H = x_k^m  + A_1(x_1 , \cdots , x_{k-1} ) x_k^{m-1}+ \cdots +  A_{m-1}(x_1 , \cdots , x_{k-1} ) x_k^1  + A_m ( x_1 , \cdots , x_{k-1}  ) $ with real analytic functions $A_1, \cdots A_m$ in $x_1 , \cdots , x_{k-1} $.

\bigskip
We consider the case of $h_0 >0$ in  Lemma \ref{geod55} (ii). For $z \in  W$, we can consider the geodesic $\theta_z (t) $ perpendicular to $W$ at $z$ such that  $\theta_z (0) =z$ and $\theta_z (\tilde{a} - \tilde{b}, 0)  :=\{ \theta_z(t) \ | \   t \in  (\tilde{a} - \tilde{b}, 0)   \}  \subset U$. Set $q(t):= f( \theta_z (t))$.
 The functions $q(t)$ and $ q^{'}(t)= df( \theta_z^{'} (t)) $  are  real analytic. So, $q^{'}(t)\neq 0$ on an interval $(0, \tilde{e}]$ for a number $\tilde{e}$ depending on $z$, and
$\nabla f \neq 0$  on $\theta_z  ( 0,  \tilde{e}]$.
  Setting $\tau = f (\theta_z( \tilde{e})  )$,
let $V^z$ be the connected component  of $f^{-1} ((b, \tau)) $  or $f^{-1} ((\tau, b)) $  which contains the curve $ \theta_z((0, \tilde{e} ))$. Set $W^z :=f^{-1}(b) \cap \overline{V^z}$.




By Lemma  \ref{lcf1v}, $V^z$ is of the form $\eta (\Sigma_x^0 \times (\tilde{c}, \tilde{d}) )$ for some numbers $x,  \tilde{c} ,\tilde{d}  $ where $\eta$ is defined on $V^z$ by the $\frac{\nabla f}{|\nabla f|}$ flow, i.e. $\eta_q(0) =q$ for $q \in \Sigma_x^0$  and $\eta^{'}_q(t)  = \frac{\nabla f}{|\nabla f|}$, just like  $\theta$ on $U$. By Lemma \ref{geod55}, $W^z$ is an $(n-1)$-dimensional real analytic submanifold of $M$ and
the geodesic $\eta_{q_z} (t) $ for some $q_z \in \Sigma_x^0$ meets $W^z$ perpendicularly at $z$.
We have $z \in W^z \cap W $ and  $\nabla f =0$ on
$W^z $  from Lemma \ref{threesolbx} (ii).
If $T_zW \neq  T_zW^z$, i.e. if $W$ and $W^z$ meet transversally,  then $U$ has a point where $\nabla f = 0$, a contradiction.
 So,  $T_zW=  T_zW^z$. As $\theta_z^{'}(0)  \perp T_z W$, we have
 $\theta_z^{'}(0) \perp T_z W^z$ so that the two geodesics $\eta_{q_z}  $ and $\theta_{z}  $ have the same trace.
Then $\theta_z( 0,  \tilde{e})$ is tangent to $\frac{\nabla f}{|\nabla f|}$. We can write $\theta_z(t+ c_0) = \eta_{q_z} (t )$ for a constant $c_0$.

\bigskip
The next example warns that  we should not quickly conclude $\overline{V^z}$ and  $\overline{U}$ meet along $W$. Here the dependence of $\tilde{e}$ on $z$  can be not continuous.

\smallskip
{\bf Example 3}
Consider the real analytic function $f(x, y)= y(y-x^2 )$ on $\mathbb{R}^2$. Let $U = \{ (x, y) \ | \ y<0  \} $,  $V^{(0, 0)} = \{ (x, y) \ | \ y> x^2  \} $ and $W= \{ (x, y) \ | \ y=0  \}  $. Then $\overline{U}$ and  $\overline{V^{(0, 0)}}$ meet tangentially at the origin $(0,0)$, but not along $W$.


\begin{lemma} \label{geod55c} Under the hypothesis of Lemma \ref{geod55} {\rm (ii)}, the following {\rm (i)}$\sim${\rm (iii)} holds.

\smallskip
{\rm  (i)}   If  $U \cap V^z  \neq \emptyset$ for some $z \in W$, then $V^z \subset U$ or  $V^z \supset U$. Moreover, $ f^{-1}(b) \cap \overline{V^z}=W$ and  $\theta(  \Sigma_c^0 \times (\tilde{b}, \tilde{b} + \tilde{e} )  ) = V^z $.
In particular, $V^z$ can be independent of $z$.

\smallskip
{\rm (ii)} If $U \cap V^z  = \emptyset$ for all $z \in W$, then $W=W^z$ for any $z \in W$, and the map $\theta :  \Sigma_c^0 \times (\tilde{a}, \tilde{b} + \tilde{e} )   \rightarrow  M$  is a diffeomorphism onto $U \cup W \cup V^z$.
We may set $ V^z=\theta(  \Sigma_c^0 \times (\tilde{b}, \tilde{b} + \tilde{e} )  ) $ for any $z$
so that  $V^z$ is independent of $z$.

\smallskip
{\rm (iii)} Setting $V:=V^z$ for $z \in W$,  the Riemannian metric expression $g= ds^2  +    h(s)^2 \tilde{g}$  on $U$ extends onto   $U \cup W \cup V$.

\end{lemma}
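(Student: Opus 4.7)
The plan is to track the geodesic family $\theta_p(t)$, $p \in \Sigma_c^0$, as it crosses $W$ at $t=\tilde{b}$ and to identify the region it enters afterwards with either $V^z$ (case (i)) or a genuinely new region glued to $U$ along $W=W^z$ (case (ii)). As a preparatory step I would show that $\theta$ extends through $W$ in a controlled way: for $p \in \Sigma_c^0$, setting $z := \theta_p(\tilde{b}) \in W$, Lemma \ref{geod55}(ii) gives $\theta_p'(\tilde{b}) \perp T_z W$, and since $\theta_z$ is the unique unit-speed geodesic through $z$ perpendicular to $W$ with the correct orientation, uniqueness forces $\theta_p(\tilde{b}+t)=\theta_z(t)$ for small $t$. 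Combined with the construction of $V^z$ from $\theta_z((0,\tilde{e}))$ in the discussion preceding the lemma, this places $\theta_p(\tilde{b}+t)$ in $V^z$ for sufficiently small $t > 0$, so the entire flow family continues past $W$ into $V^z$.

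For (i), assume $U \cap V^{z_0}\neq\emptyset$ for some $z_0 \in W$ and pick $q$ in the intersection. Both $U$ and $V^{z_0}$ are foliated by connected components of $f$-level sets, hence the component $\Sigma^*$ of $f^{-1}(f(q))$ through $q$ is a common leaf. Each region has a warped-product expression $ds^2 + h(s)^2\tilde{g}$ provided by Lemma \ref{lcf1v}(iii); since $s$ is determined up to a shift by $\nabla s = \nabla f/|\nabla f|$ and $h(s)^2\tilde{g}$ is the induced metric on a leaf, the two warped-product structures based at $\Sigma^*$ must agree pointwise on $U\cap V^{z_0}$. Their $s$-ranges from $\Sigma^*$ are therefore nested intervals, which yields $V^{z_0}\subset U$ or $V^{z_0}\supset U$. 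The identity $\theta(\Sigma_c^0\times(\tilde{b},\tilde{b}+\tilde{e}))=V^{z_0}$ follows because the left-hand side is precisely the warped-product extension of $U$ past $W$ along the geodesic flow, and the comparison above identifies it with $V^{z_0}$. Finally $f^{-1}(b)\cap\overline{V^{z_0}}=W$: the only approach of $V^{z_0}$ to the level $\{f=b\}$ is along $\theta(\Sigma_c^0\times\{\tilde{b}\})=W$, while its other boundary lies on $\{f=\tau\}$. Independence of $V^z$ on $z$ is immediate from this characterization via $\theta$.

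For (ii), assume $U\cap V^z=\emptyset$ for all $z\in W$. I would first establish $W=W^z$. The discussion preceding the lemma already gave $T_zW=T_zW^z$, and both $W$ and $W^z$ are real analytic $(n-1)$-dimensional submanifolds of $M$ contained in the real analytic set $\{\nabla f=0\}\cap f^{-1}(b)$; analytic continuation from matching tangent spaces at the common point $z$ forces local, hence (via the $\theta$-flow and connectedness of $\Sigma_c^0$) global coincidence. Once $W=W^z$, injectivity of $\theta$ on $\Sigma_c^0\times(\tilde{a},\tilde{b}+\tilde{e})$ follows from Lemma \ref{geod55}(ii) applied on each side of $W$ combined with the disjointness hypothesis, and smoothness of the inverse again comes from Lemma \ref{geod55}(ii). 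For (iii), the expression $g=ds^2+h(s)^2\tilde{g}$ on $U$ pulls back along the diffeomorphism of (ii) (respectively along the surjection of (i)) to a warped product on the cylinder $\Sigma_c^0\times(\tilde{a},\tilde{b}+\tilde{e})$, and extends smoothly across $s=\tilde{b}$ since $h_0>0$.

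The main obstacle I expect is the nesting statement in (i): ruling out pathological overlap patterns between $U$ and $V^{z_0}$ and reducing to a clean inclusion. The argument hinges on the rigidity of the warped-product decomposition based at a common leaf $\Sigma^*$, together with the real analyticity of $g$ and $f$ to propagate local agreement of the $s$-parameterizations into global nesting of the regions.
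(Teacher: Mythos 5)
Your parts (i) and (iii) are essentially correct and close to the paper's route: for the nesting in (i) the paper argues more directly that $U$ and $V^z$ are connected components of $f^{-1}$ of nested intervals (via Lemma \ref{lcf1v}(ii)), where you route the same conclusion through the warped-product structure based at a common leaf; either works, and the rest of (i) and the extension statement (iii) match the paper.

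The genuine gap is in (ii), at the step $W=W^z$. You claim that since $W$ and $W^z$ are real analytic hypersurfaces with $T_zW=T_zW^z$ at the common point $z$, ``analytic continuation from matching tangent spaces \dots forces local, hence \dots global coincidence.'' This deduction is false: two distinct real-analytic hypersurfaces can share a tangent space at a point without coinciding, and first-order tangency together with containment in the (possibly reducible or singular) analytic set $f^{-1}(b)\cap\{\nabla f=0\}$ gives nothing. The paper's Example~3 is inserted precisely as a warning against this: for $f(x,y)=y(y-x^2)$ one has $W=\{y=0\}$ and $W^{(0,0)}=\{y=x^2\}$, both real analytic curves inside $f^{-1}(0)$, tangent at the origin, yet distinct, so that $\overline{U}$ and $\overline{V^{(0,0)}}$ meet only at a point and not along $W$. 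Your argument as written would ``prove'' $W=W^{(0,0)}$ in that example.

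The paper closes this gap with an idea your proposal is missing. Assuming for contradiction that $W\neq W^z$ for every $z\in W$, it applies the Weierstrass preparation theorem to $f-b$ near a point $z_0\in W$, writing the zero set of $f-b$ locally as the zero set of a distinguished polynomial in the transverse variable with analytic coefficients in the tangential variables. This yields a point $z_1\in B\cap W$ and a small ball $B_{z_1}$ on which $f^{-1}(b)\cap B_{z_1}$ is \emph{exactly} $W\cap B_{z_1}$, i.e.\ the extra branches of $f^{-1}(b)$ (as in Example~3) have been pushed away. Since $W^{z_1}\subset f^{-1}(b)$ is an $(n-1)$-dimensional analytic submanifold through $z_1$, it must then agree with $W$ on $B_{z_1}$, hence globally by real analyticity, contradicting the assumption. (The paper also notes separately that once a single $z$ with $W=W^z$ exists, $\theta$ is a diffeomorphism onto $U\cup W\cup V^z$ and then $W=W^z$ for all $z$.) Without the Weierstrass step, or an equivalent argument ruling out extra branches of $f^{-1}(b)$ tangent to $W$, part (ii) does not go through.
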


\begin{proof}
Suppose  $U \cap V^z  \neq \emptyset$ for some $z \in W$.  From the definition of $U$ and $V^z$ and Lemma  \ref{lcf1v} (ii), and also by considering $f$-values on $U$ and $V^z$, it holds that
 $V^z \subset U$ or  $V^z \supset U$.  Clearly
$ f^{-1}(b) \cap \overline{V^z}=f^{-1}(b) \cap   \overline{U}=W$. The $\frac{\nabla f}{|\nabla f|}$ flow on $V^z$ meets $W$ perpendicularly by  Lemma \ref{geod55} (ii), so  the geodesic curve
 $\theta_z ( 0,  \tilde{e})$ is tangent to $\frac{\nabla f}{|\nabla f|}$ in $V^z$, and  $\theta(  \Sigma_c^0 \times (\tilde{b}, \tilde{b} + \tilde{e} )  ) = V^z $.   (i) is proved.

Suppose $U \cap V^z  = \emptyset$ for all $z \in W$.
 If there exists  $z \in W $ with $W = W^z$, then
 the map $\theta :  \Sigma_c^0 \times (\tilde{a}, \tilde{b} + \tilde{e} )   \rightarrow  U \cup W \cup V^z$ is a diffeomorphism.  We get $W=W^z$ for any $z \in W$. We may set $ V^z=\theta(  \Sigma_c^0 \times (\tilde{b}, \tilde{b} + \tilde{e} )  ) $ for any $z$.

Suppose that $W \neq W^{z}$  for each $z \in W$.  $W^z$ is a real analytic $(n-1)$-dimensional submanifold containing $z$, but  not contained in $\overline{U}$.
 By the Weierstrass Preparation theorem applied to the function $f-b$ near  $z_0 \in W$,
the zero set of $f-b$ in a ball neighborhood $B$ of $z_0$  is the zero set of its distinguished polynomial  $H(x_2 , \cdots , x_{n} ; s)  = s^m  + A_1 s^{m-1}+ \cdots +  A_{m-1}  s^1  + A_m $ where $A_1, \cdots A_m$ are real analytic functions of $x_2 , \cdots , x_{n}$ only.
So, there is some point $z_1$ in  $B\cap W$ and its small neighborhood $ B_{z_1} \subset B$ where  the zero set of $f-b |_{B_{z_1}} $ is  $B_{z_1} \cap W$. As $W^{z_1} \subset f^{-1}(b)$,
we get $W^{z_1} \cap B^{z_1} =W \cap B^{z_1} $. This implies $W^{z_1}=W$,  because they are both real analytic hypersurfaces in $f^{-1}(b)$ and the values of all derivatives of $f$ at a point determine $f$. This is a contradiction. Now (ii) is proved.



The Riemannian metric expression   $g= ds^2 +    h(s)^2 \tilde{g}$  on $U$ extends smoothly onto   $U \cup W \cup V$  when $  U \cap V \neq  \emptyset $ and  when  $U \cap V =  \emptyset$.    (iii) is proved.
\end{proof}

 We remark that the Lemmas  \ref{lcf1v}$\sim$\ref{geod55c}  can be regarded as supplementing some argument in the proof of Theorem 1 and 2 of \cite{FG3}.

In  Lemma \ref{geod55c} (i), we have $U\subset V$ or $V\subset U$. By reducing $V$ or $U$ we can have $U=V$.

\begin{lemma} \label{da1f} In the case of  Lemma {\rm \ref{geod55c} (i)}, if $\tilde{b} -  \tilde{a}  = \tilde{e}$,
then $U=V$ and  the map $\theta : \Sigma_c^0 \times ( \tilde{a}, \tilde{b}+ \tilde{e} ) \longrightarrow  U \cup W \cup V$ is a $2:1$ covering map. There is a fixed-point free real analytic  map $\phi :  \Sigma_c^0  \longrightarrow  \Sigma_c^0 $ with $\phi \circ \phi =$Identity map so that  the map $\Phi: \Sigma_c^0 \times ( \tilde{a}, \tilde{b}+ \tilde{e} ) \longrightarrow  \Sigma_c^0 \times ( \tilde{a}, \tilde{b}+ \tilde{e} )$ defined by $\Phi (( p, t))   =(\phi(p),2 \tilde{b} -t      ) $ is a deck transformation.
In particular,
the region $U \cup W \cup V$ is  diffeomorphic  to the $\mathbb{Z}_2$-quotient manifold $ \Sigma_c^0 \times ( \tilde{a}, \tilde{b}+ \tilde{e} )/  \Phi$.
\end{lemma}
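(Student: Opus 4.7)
The plan is to use the two parametrizations of the region (by $\theta$ restricted to $\Sigma_c^0 \times (\tilde{a}, \tilde{b})$ and to $\Sigma_c^0 \times (\tilde{b}, \tilde{b}+\tilde{e})$) to build a real analytic fixed-point-free involution $\phi$ on $\Sigma_c^0$ interchanging the two preimages of each point on $W$, and then to identify $\Phi(p,t) = (\phi(p), 2\tilde{b}-t)$ as the deck transformation of a $2{:}1$ covering.

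To prove $U = V$, I would examine how the geodesic parameter relates to the coordinate $s$. On $U$ one has $\theta_p'(t) = \nabla f/|\nabla f| = \nabla s$, so $s(\theta_p(t)) = t$; since $\theta_p$ is a smooth unit-speed geodesic while $\nabla f/|\nabla f|$ reverses sign across $W$ (where $\nabla f = 0$), on the $V$-side one has $\theta_p'(t) = -\nabla s$, hence $s(\theta_p(\tilde{b}+\epsilon)) = \tilde{b} - \epsilon$. Letting $\epsilon$ run over $(0, \tilde{e})$ this value sweeps $(\tilde{b}-\tilde{e}, \tilde{b}) = (\tilde{a}, \tilde{b})$ by the hypothesis $\tilde{b}-\tilde{a} = \tilde{e}$, so $V$ realizes the full $s$-range of $U$; combined with the dichotomy $V \subseteq U$ or $V \supseteq U$ from Lemma \ref{geod55c}(i), this forces $V = U$.

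Next I would construct $\phi$. For $p \in \Sigma_c^0$ and small $\epsilon > 0$, since $\theta_p(\tilde{b}+\epsilon) \in V = U$, the bijection $\theta|_{\Sigma_c^0 \times (\tilde{a}, \tilde{b})}$ produces a unique $q \in \Sigma_c^0$ with $\theta_q(\tilde{b}-\epsilon) = \theta_p(\tilde{b}+\epsilon)$, and I set $\phi(p) := q$. Independence from $\epsilon$ follows by letting $\epsilon \to 0^+$: the preimages of $\theta_p(\tilde{b}) \in W$ under $\theta|_{\Sigma_c^0 \times \{\tilde{b}\}}$ are $p$ and $q$, and the alternative $q = p$ would yield $\theta_p(\tilde{b}+\epsilon) = \theta_p(\tilde{b}-\epsilon)$, contradicting that $\theta_p$ is a smooth unit-speed geodesic that does not fold at $\tilde{b}$. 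Real analyticity of $\phi$ follows from the real analytic local inverse of $\theta$ supplied by Lemma \ref{geod55}(ii); the involution property is immediate from the symmetric form of the defining relation, and fixed-point-freeness is the same no-folding argument.

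With $\phi$ in hand, $\Phi(p,t) = (\phi(p), 2\tilde{b}-t)$ is a well-defined involution of $\Sigma_c^0 \times (\tilde{a}, \tilde{b}+\tilde{e})$ by the hypothesis, and $\theta \circ \Phi = \theta$ on $\Sigma_c^0 \times (\tilde{b}, \tilde{b}+\tilde{e})$ by construction and on the complement by applying the involution. Each point of $U = V$ has exactly two $\theta$-preimages (one in each open piece), while each point of $W$ has preimages $(p, \tilde{b})$ and $(\phi(p), \tilde{b})$; together with the fact that $\theta$ is a real analytic local diffeomorphism everywhere, including across $t = \tilde{b}$ by Lemma \ref{geod55}(ii), this makes $\theta$ a $2{:}1$ covering with deck group $\{\mathrm{id}, \Phi\}$, yielding $U \cup W \cup V \cong \Sigma_c^0 \times (\tilde{a}, \tilde{b}+\tilde{e})/\Phi$. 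The main obstacle I anticipate is the consistency and smoothness of $\phi$ across $W$ (in particular its $\epsilon$-independence and real analyticity as a map of $\Sigma_c^0$), which rests crucially on the real analytic structure of $\theta$ at $\Sigma_c^0 \times \{\tilde{b}\}$ established in Lemma \ref{geod55}(ii).
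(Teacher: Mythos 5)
Your proof is correct and follows essentially the same route as the paper: establish $U=V$ by comparing the $s$- (equivalently $f$-) values swept out on the two sides of $W$, define $\phi(p)=q$ by matching $\theta_q(\tilde b-\epsilon)=\theta_p(\tilde b+\epsilon)$, and verify that $\Phi(p,t)=(\phi(p),2\tilde b-t)$ is a fixed-point-free real analytic deck transformation of the $2{:}1$ covering $\theta$. The only terse spot is "independence from $\epsilon$ follows by letting $\epsilon\to 0^+$," which is not by itself an argument; the clean justification (consistent with your non-folding remark, and no less rigorous than the paper's "one can readily check") is that the unit-speed geodesics $t\mapsto\theta_{q}(t)$ and $t\mapsto\theta_p(2\tilde b-t)$ agree at one point with the same velocity, hence coincide identically, so $q$ does not depend on $\epsilon$.
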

\begin{proof}
As $U \cap V \neq  \emptyset$ and $  \tilde{b} - \tilde{a} = \tilde{e} $, we get
$U=V$ by considering $f$-values.  Set $\tilde{\tau} = \tilde{b}+ \tilde{e} $.
As $U=V$, for $p \in \Sigma_c^0$,  $U \supset \theta_p ( ( \tilde{b}, \tilde{\tau} )  )$ which is perpendicular to $W$ at $z$.
So,  considering $f$-values, there are  points $q \neq p$ in $\Sigma_c^0$ such that
 $ \theta_q (( \tilde{a}, \tilde{b} ))$ in $U$ equals  $ \theta_p ( ( \tilde{b}, \tilde{\tau} )  ) $ in $V$. One can readily check that  $q$ is unique and  $\phi \circ \phi (p)  = p$ when we write $q = \phi(p)$. So, we have the identification $\theta (p, t)= \theta(\phi(p), 2\tilde{b} -t      ) $ for $ t  \in ( \tilde{a}, \tilde{\tau} )$.
  This bijective map $\phi :   \Sigma_c^0  \longrightarrow   \Sigma_c^0$ is real analytic  because a small neighborhood of $p$ in $ \Sigma_c^0 $ will be mapped via  $\phi$ onto its exact copy in $\Sigma_c^0$  through the above geodesics $\theta_p(t)$.
The real analytic diffeomorphism  $\Phi: \Sigma_c^0 \times ( \tilde{a}, \tilde{\tau} ) \longrightarrow  \Sigma_c^0 \times ( \tilde{a}, \tilde{\tau} )$ defined by $\Phi (( p, t))   =(\phi(p),2 \tilde{b} -t      ) $  is fixed-point free  and $\Phi^2 =${\rm Identity}. Moreover,  $\theta \circ \Phi  = \theta$ so that   $\Phi $ is a deck transformation for the $2:1$ covering map $\theta : \Sigma_c^0 \times ( \tilde{a}, \tilde{\tau} ) \longrightarrow  U \cup W \cup V$.
It follows that  $U \cup W \cup V$ is  diffeomorphic  to the $\mathbb{Z}_2$-quotient manifold $ \Sigma_c^0 \times ( \tilde{a}, \tilde{\tau} )/  \Phi$.
\end{proof}

We are ready to give a global description of $(M, g)$.

\begin{lemma} \label{ca5}
Let  $(M, g, f)$ be an $n$-dimensional connected complete  gradient Ricci soliton  with harmonic Weyl curvature. Assume that $M$ contains a type-{\rm (iii)} region of Theorem {\rm \ref{locals}}.
We then have only the following cases;

\medskip
{\rm (A)}   $(M, g)$ is isometric to $(\mathbb{S}^n, g_0)$ or its quotient, where $g_0 = ds^2 + h^2(s) \tilde{g}$ is a warped product metric on the sphere $\mathbb{S}^n$.  Here $\tilde{g}$ has positive constant curvature and $(M, g)$  is   locally conformally flat.

\smallskip
{\rm (B)}   $(M, g)$ is isometric  to  $(\mathbb{R}^n, g_0) $ where
$g_0= ds^2 + h^2(s) \tilde{g}$, $s \geq 0$ and $h(0)=0$.   Here $\tilde{g}$ has positive constant curvature and $(M, g)$  is   locally conformally flat.


\smallskip
{\rm (C)}  $(M, g)$ is isometric to   $(\Sigma_c^0 \times \mathbb{R},  g_0)  $  or its quotient,  where $\Sigma_c^0$ is a connected component of $f^{-1} (c)$ for a regular value $c$ of $f$,   and $g_0 = ds^2 + h^2(s) \tilde{g}$ with  $h>0$ on $\mathbb{R}$ and  $\tilde{g}$ is  an Einstein metric on $\Sigma_c^0$.
\end{lemma}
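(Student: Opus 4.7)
The plan is to exhaustively extend the warped-product structure from a single type-(iii) region across all of $M$ using the $\frac{\nabla f}{|\nabla f|}$-flow $\theta$, matching local pieces along their boundaries. First I would fix a point $p$ with $\nabla f(p)\neq 0$ and take the maximal open set $U=\theta(\Sigma_c^0\times(\tilde{a},\tilde{b}))$ on which Lemma \ref{lcf1v} gives $g=ds^2+h(s)^2\tilde{g}$ with $\tilde{g}$ Einstein on $\Sigma_c^0$. Because $M$ contains a type-(iii) region, the observation already used in the proof of Lemma \ref{lcf1v} (together with Lemma \ref{19} and the continuity/multiplicity of Ricci eigenvalues) rules out type-(i), (ii), (iv) pieces; hence $\{\nabla f\neq 0\}$ is covered by such warped-product strips, and its complement consists of critical points of $f$.

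Next I would analyze each endpoint. By completeness and the formula $r(\nabla s,\nabla s)=-(n-1)h''/h$, the limit $h_0=\lim_{s\to\tilde{b}^-}h(s)$ exists in $[0,\infty)$ whenever $\tilde{b}<\infty$, and similarly at $\tilde{a}$. Three sub-cases arise at the endpoint $\tilde{b}$: if $\tilde{b}=\infty$ nothing is to be done; if $\tilde{b}<\infty$ and $h_0=0$, Lemma \ref{geod55}(i) collapses $W=f^{-1}(b)\cap\overline{U}$ to one point $p_0$ and smoothness at $p_0$ forces $\tilde{g}$ to be a metric of positive constant sectional curvature (the standard warped-product smoothness criterion); if $\tilde{b}<\infty$ and $h_0>0$, Lemma \ref{geod55}(ii) makes $W$ a real analytic hypersurface, and Lemmas \ref{geod55c} and \ref{da1f} either extend the warped product across $W$ into a neighboring type-(iii) region with the same expression $ds^2+h(s)^2\tilde{g}$ (enlarging the $s$-interval) or exhibit $U\cup W\cup V$ as the $\mathbb{Z}_2$-quotient $\Sigma_c^0\times(\tilde{a},\tilde{b}+\tilde{e})/\Phi$.

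I would then iterate the extension through successive $W$'s, keeping the same Einstein cross-section $\tilde{g}$ and enlarging the warped function $h$. Because type-(iii) regions tile the open dense set $\{\nabla f\neq 0\}$ and $M$ is connected, the exhaustion must terminate in exactly one of the stated outcomes: if both endpoints become infinite with $h>0$ throughout, the union is $\Sigma_c^0\times\mathbb{R}$, possibly modded by a deck transformation produced by Lemma \ref{da1f}, giving case (C); if exactly one endpoint collapses smoothly and the other is infinite, one obtains $\mathbb{R}^n$ with a single rotational cap and $\tilde{g}$ of constant positive curvature, giving case (B); if both endpoints collapse smoothly at finite distance, one obtains $\mathbb{S}^n$ with $\tilde{g}$ of positive constant curvature, giving case (A). In cases (A) and (B) local conformal flatness is immediate from $\tilde{g}$ being of constant sectional curvature combined with the warped-product form.

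The main obstacle is the bookkeeping for case (C) when $U$ meets its own flow-translate across some $W$: here I would apply Lemma \ref{da1f} carefully to produce the fixed-point-free involution $\phi$ and the deck transformation $\Phi$, and verify that the quotient still carries the global warped product $ds^2+h(s)^2\tilde{g}$. A second subtle point is ensuring that the same Einstein metric $\tilde{g}$ is propagated across every $W$-crossing of type $h_0>0$; this uses Lemma \ref{geod55c}(iii) to extend $g=ds^2+h(s)^2\tilde{g}$ smoothly onto $U\cup W\cup V$, and then the real analyticity of $g$ and $f$ to match the cross-sectional metric uniquely on the other side. Once these identifications are established, the trichotomy (A)–(C) follows immediately by enumerating how many endpoints collapse.
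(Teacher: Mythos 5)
Your proposal is correct and follows essentially the same route as the paper: take a maximal warped-product strip $U=\theta(\Sigma_c^0\times(\tilde a,\tilde b))$, analyze each finite endpoint according to whether $h_0=0$ (collapse to a point via Lemma \ref{geod55}(i), forcing $\tilde g$ spherical) or $h_0>0$ (extend or quotient via Lemmas \ref{geod55c} and \ref{da1f}), and iterate to obtain the trichotomy (A)--(C). The only cosmetic difference is that the paper also explicitly notes the sub-case where the two boundary components of $U$ coincide (landing in (C)), which your phrase ``possibly modded by a deck transformation'' covers implicitly.
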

\begin{proof}

We may start with a connected component  $U$ of $f^{-1} (a, b)$, for some $a< b$, of the form $\theta (\Sigma_c^0 \times (\tilde{a}, \tilde{b}) )$ where $\nabla f \neq 0$,  as in  Lemma  \ref{lcf1v}.
As long as  $\nabla f \neq 0$ on  $f^{-1}(b) \cap   \overline{U}$ or  $f^{-1}(a) \cap   \overline{U}$, we may extend $U$ and get  a maximal connected component of the same kind.  We still denote it by  $U=\theta (\Sigma_c^0 \times (\tilde{a}, \tilde{b}) )$.
Clearly $\overline{U} \setminus U $ is the union of
$f^{-1}(b) \cap \overline{U}$ and  $f^{-1}(a) \cap \overline{U}$ if nonempty.

 If $\tilde{b} = \infty$, then $g = ds^2 + h^2(s) \tilde{g}$ on $U=\theta (\Sigma_c^0 \times (\tilde{a}, \infty ) )$ by  Lemma \ref{lcf1v}, so  that  $U$ stretches to the infinity as $s \rightarrow \infty$.

Suppose $\tilde{b} < \infty$. Then
$\nabla f =0$ along $W=f^{-1}(b) \cap   \overline{U}$. By  Lemma \ref{geod55},
if $\lim_{s \rightarrow  \tilde{ b}- } h(s)=0$, then the metric $g$ closes up to a point so that $\tilde{g}$ is a spherical metric and $W$ is a point, say $ \{p_0\}$, and $g$ is a smooth rotationally symmetric metric on  $U \cup \{ p_0 \}$.

If $\lim_{s \rightarrow \tilde{ b}- } h(s) >0$, by Lemma \ref{geod55c} and Lemma \ref{da1f} we get a maximal region $V$ so that
either $U \cup W \cup V$ is diffeomorphic to  $\Sigma_c^0 \times J$ via $ \theta$ for an open interval $J$, or
  $U \cup W \cup V$ is diffeomorphic to a $\mathbb{Z}_2$-quotient manifold of  $ \Sigma_c^0 \times J $.
In any case, by Lemma \ref{geod55c} (iii), $g$ can be written as  a warped product metric  $ ds^2 +    h(s)^2 \tilde{g}$ on the extended region.

The argument for $\tilde{b}$ in the above works similarly for $\tilde{a}$ too.
Now we repeat the above region-extending.
 Let us denote a region obtained at a step still by $U$. If $U$ has  two boundary components which coincide, $(M, g)$ belongs to  the case (C). Otherwise, $U$ may close up to a point at a boundary component,  may extend further as in  Lemma \ref{geod55c} (ii), or may need  Lemma \ref{geod55c} (i).

\medskip

 Then it is not hard to get eventually (A), (B) and  (C) only.
For instance, if $U$ has two boundary components $W_1$ and $W_2$ each of which is as in   Lemma \ref{da1f}, then
a covering manifold of $M$ can be diffeomorphic to  $\Sigma_c^0 \times \mathbb{R}$,  and
 $(M, g)$ belongs to the case (C). For another instances,  if  each of $W_1$ and  $W_2$  is a point (closing up), then $(M, g)$ belongs to  (A), and
 if  $W_1$ is a point and $W_2$ is as in Lemma \ref{da1f}, then $(M, g)$ still belongs to  (A). This proves the lemma.

\end{proof}

Now we prove Theorem 2 for steady solitons.

\medskip

\noindent {\bf  Proof of Theorem   \ref{steady}:}
Let  $(M, g, f)$ be an $n$-dimensional connected complete steady gradient Ricci soliton  with harmonic Weyl curvature.
Theorem \ref{locals} shows
 the possible four types (i)$\sim$(iv) of local regions of $M$. If $M$ admits a type-(i) region, then the soliton function $f$ is constant on the whole $M$ and $g$ is Ricci flat.
Next, $M$ cannot have a type-{\rm (ii)} region since $\lambda =0$.
 A type-(iv) region is not possible by  Lemma \ref{19}. We may assume that $M$ has only type-(iii) regions. We can use Lemma \ref{ca5}.
For  the locally conformally flat cases of (A) and (B),  $(M, g, f)$  is either flat or
 isometric to the Bryant soliton by \cite{CC1, CM}.

In the case of (C), a Riemannian covering $({\hat{M}}, \hat{g}, {\hat{f}})$ of $(M, g, f)$ is isometric to
 $( \mathbb{R} \times W^{n-1}, \  ds^2 +    \hat{h}^2 \tilde{g}  ) $, where  ${\hat{f}} = {\hat{f}}(s)$ and $\hat{h}=\hat{h}(s)>0 $ on  $ \mathbb{R} $, and  $\tilde{g}$ is an Einstein metric on a manifold $W^{n-1}$, say  $r_{\tilde{g} }= k (n-2) \tilde{g} $ for a constant $k$.
  As $({\hat{M}}, {\hat{g}}, {\hat{f}})$ is a  steady gradient soliton, $\nabla^{\hat{g}} d{\hat{f}} + r_{\hat{g}}=0$
where $\nabla^{\hat{g}}$ is the Levi-Civita connection of ${\hat{g}}$.


Let $\tilde{g}_k$ be a complete Riemannian metric of constant curvature on a new manifold $W_k^{n-1}$ with $r_{\tilde{g}_k }= k(n-2) \tilde{g}_k $.
Then, the new metric $G:=  ds^2 +    \hat{h}(s)^2 \tilde{g}_k$ on $ N :=\mathbb{R} \times W_k^{n-1}$ is a complete  locally conformally flat metric.  For $i \geq 2$, set  $E_i = \frac{1}{ \hat{h}} e_i$ where $ \{e_i\}$ is a local orthonormal frame field of $\tilde{g}_k$.
The Ricci tensor components of $ G$  are $ r_{   G}(\nabla s,\nabla s) = - (n-1) \frac{\hat{h}^{''}}{\hat{h}}$, $ r_{   G}(E_i,E_j) = \{ \frac{ (n-2)k}{\hat{h}^2} -  \frac{\hat{h}^{''}}{\hat{h}}  -  (n-2)\frac{(\hat{h}^{'})^2}{\hat{h}^2} \}  \delta_{ij}$  for $i, j >1$ and $ r_{   G}(\nabla s,E_i) =0$. Considering $\hat{f}(s) $ as defined on $N$, we get
$\nabla^{G} d{\hat{f}} (\nabla s,  \nabla s)= {\hat{f}}^{''}$, $\nabla^{G} d{\hat{f}} (E_i,  E_j)= \frac{\hat{h}^{'}}{\hat{h}} {\hat{f}}^{'} \delta_{ij} $ and  $\nabla^{G} d{\hat{f}} (\nabla s,  E_i)=0 $. We can get the corresponding same formulas for
$r_{\hat{g}}$ and $\nabla^{\hat{g}} d{\hat{f}}$. Since  $( {\hat{M}}, {\hat{g}}, {\hat{f}} )$ is a steady gradient Ricci soliton,  so is  $( N, G, {\hat{f}} )$.
Now by \cite{CC1, CM}, $( N, G,{\hat{f}} )$ is either flat or
 isometric to the Bryant soliton.

If $( N, G, {\hat{f}} )$ is  flat, then we get $\hat{h}^{''}=0$ and $\hat{h}= as +b $, for constants $a$ and $b$.
If $a \neq 0$, we have a contradiction to $\hat{h}(s)>0$ on $\mathbb{R}$.
 If $a=0$, then $\hat{h}=b>0$ and $({\hat{M}}, {\hat{g}}, {\hat{f}})$ is the Riemannian product
 $( \mathbb{R} \times W^{n-1}, \  ds^2 +    b^2 \tilde{g}  ) $, where  $\tilde{g}$ is a Ricci flat metric on a manifold $W^{n-1}$.
 So,  $(M, g, f)$ is  Ricci flat.


 If $( N, G, \hat{f} )$ is  isometric to the Bryant soliton, the scalar curvature $R_G$ of $G=  ds^2 +    \hat{h}(s)^2 \tilde{g}_k$ is a non-constant function of $s$ only. So, the level hypersurfaces of  $R_G$  does not close up to a point because  $\hat{h}(s)>0 $ on  $ \mathbb{R} $.  But the level hypersurfaces of  the scalar curvature  in the Bryant soliton  close up to a point \cite[Chap. 1,  Section 4]{Chow}, a contradiction.
In summary, we have proved Theorem \ref{steady}.


\bigskip

Next we prove Theorem  \ref{expand} for expanding solitons.

\smallskip
\noindent {\bf  Proof of Theorem   \ref{expand}:}
Let  $(M, g, f)$ be an $n$-dimensional connected complete expanding gradient Ricci soliton  with harmonic Weyl curvature.
As $\lambda<0$, $M$ can have regions
 of type (i)$\sim$(iii) by Theorem \ref{locals}. If $M$ admits a type-(i) region, then the soliton function $f$ is constant on the whole $M$ and $g$ is Einstein.
If $(M, g)$ admits a type-(ii) region, then  the scalar curvature is constant on $M$ and we  get the case (ii) of  Theorem  \ref{expand} from \cite{FG2, PW}.

It is well known that any compact expanding Ricci soliton is Einstein with constant $f$.
So, if $(M, g)$ admits a type-(iii) region, by Lemma \ref{ca5} we get the cases of
{\rm (B)} and
{\rm (C)}.
From these we can get (iii) and (iv) of  Theorem  \ref{expand}.  We have finished the proof.

\begin{remark}
{\rm Some argument in classifying gradient Ricci solitons with harmonic Weyl curvature can be applied to studying other geometric spaces.  Some more results in this direction will appear elsewhere. }
\end{remark}

\bigskip \noindent
\footnotesize{ Jongsu Kim: Dept. of Math., Sogang University, Seoul, Korea; \ \ jskim@sogang.ac.kr}

\begin{thebibliography}{99}


\bibitem{Be}  A. L. Besse,  {\it Einstein manifolds}. Ergebnisse der Mathematik, 3 Folge, Band 10, Springer-Verlag, 1987, MR2371700, Zbl 0613.53001.



\bibitem{BM} J. Bernstein and T. Mettler, {\em Two-dimensional gradient Ricci solitons revisited}, International Mathematics Research Notices no. 1 (2015), 78--98, MR3340295, Zbl 1314.53117.



\bibitem{Br} S. Brendle, {\em Rotational symmetry of self-similar solutions to the Ricci flow}, Invent. Math. 194(3) (2013), 731--764, MR3127066, Zbl 1284.53044.

\bibitem{Cao1} H.D. Cao, {\em Recent progress on Ricci solitons}, Recent advances in geometric
analysis, 1--38, Adv. Lect. Math. (ALM), 11, Int. Press, Somerville, MA, 2010,  MR2648937, Zbl 1201.53046.

\bibitem{CCCMM} H.D. Cao, G. Catino, Q. Chen, C. Mantegazza,
and L. Mazzieri,  {\em  Bach-flat gradient steady ricci solitons}, Calc. Var. Partial Differential Equations 49 no. 1-2 (2014), 125--138,  MR3148109, Zbl 1294.53042.

\bibitem{CC1}  H.D. Cao and Q. Chen, {\em On locally conformally flat gradient steady Ricci solitons},
Trans. Amer. Math. Soc. 364 (2012), 2377--2391, MR2888210, Zbl 1245.53038.

\bibitem{CC2} H.D. Cao and Q. Chen, {\em On Bach-flat gradient shrinking Ricci solitons}, Duke
Math. J. 162 (2013), 1149--1169,  MR3053567, Zbl 1277.53036.



\bibitem{CWZ} X. Cao, B. Wang and Z. Zhang, {\em On locally conformally flat gradient shrinking Ricci solitons}, Commun. Contemp. Math.  13  no. 2  (2011), 269--282, MR2794486, Zbl 1215.53061.


\bibitem{CM} G. Catino and C. Mantegazza, {\em The evolution of the Weyl tensor under the Ricci flow}, Ann. Inst. Fourier   (Grenoble)  61   no. 4 (2011), 1407--1435, MR2951497, Zbl 1255.53034.

\bibitem{CE} J. Cheeger and D. G Ebin, {\it Comparison Theorems in Riemannian Geometry}, American Elsevier, New York 1975,   MR0458335, Zbl 0309.53035.

\bibitem{CC} J. Cheeger and T.H. Colding, {\em Lower Bounds on Ricci Curvature and the Almost Rigidity of Warped Products},
Annals of Math. 2nd Ser., Vol. 144, No. 1 (Jul., 1996), 189--237, MR1405949, Zbl 0865.53037.

\bibitem{CD} C.W. Chen and A. Deruelle, {\em Structure at infinity of expanding gradient Ricci soliton},  Asian J. Math.  19  no. 5, (2015), 933--950, MR3431684, Zbl 1335.53083.

\bibitem{CW} X.X. Chen and Y. Wang, {\em On four-dimensional anti-self-dual gradient Ricci solitons},
Jour. Geom. Anal.  25  no. 2  (2015), 1335--1343, MR3319974, Zbl 1322.53041.


\bibitem{Cho} O. Chodosh, {\em Expanding Ricci solitons asymptotic to cones}, Calc. Var. Partial Differential Equations, 51(1-2) (2014), 1--15, MR3247379,  Zbl 1298.53059.

\bibitem{Chow} B. Chow et. al., {\it The Ricci flow: techniques and applications. Part I Geometric Aspects}, Amer. Math. Soc.,  Mathematical Surveys and Monographs, Volume 135, 2007,   MR2302600, Zbl 1157.53034.



\bibitem{De} A. Derdzi\'{n}ski, {\em Classification of Certain Compact Riemannian Manifolds
with Harmonic Curvature and Non-parallel Ricci Tensor}, Math. Zeit. 172  (1980), 273--280,   MR0581444, Zbl :0453.53037.



\bibitem{ELM}  M. Eminenti, G. La Nave and C. Mantegazza, {\em Ricci solitons: the equation point of view}, Manuscripta Math. 127(3)  (2008), 345--367,  MR2448435, Zbl 1160.53031.

\bibitem{FG} M. Fern\'{a}ndez-L\'{o}pez and E. Garc\'{i}a-R\'{i}o, {\em Rigidity of shrinking Ricci solitons},
Math. Zeit. 269 (2011), 461--466,   MR2836079,  Zbl :1226.53047.


\bibitem{FG3} M. Fern\'{a}ndez-L\'{o}pez and E. Garc\'{i}a-R\'{i}o, {\em A note on locally conformally flat gradient Ricci solitons},
Geom. Dedicata  168 (2014), 1--7, MR3158028, Zbl 1284.53046.


\bibitem{FG2} M. Fern\'{a}ndez-L\'{o}pez and E. Garc\'{i}a-R\'{i}o, {\em On gradient Ricci solitons  with constant scalar curvature},
Proc. Amer. Math. Soc. 144(1) (2016), 369--378,  MR3415603, Zbl 1327.53057.


\bibitem{Iv} T. Ivey, {\em Local existence of Ricci solitons}, Manuscripta Math. 91 (1996), 151--162, MR1411650, Zbl 0870.53039.

\bibitem{Iv3} T. Ivey, {\em Ricci solitons on compact three-manifolds}, Differential Geom. Appl. 3(4) (1993), 301--307, MR1249376, Zbl 0788.53034.


\bibitem{Ki}   J. Kim, {\it On a classification of 4-d gradient Ricci solitons with harmonic Weyl curvature}, Jour. Geom. Anal.  v. {\bf 27}, no. 2, (2017), 986--1012, MR3625140, Zbl 1369.53026.



\bibitem{KP} S. G. Krantz and H. R. Parks, {\it A primer of real analytic functions}, Birkh\"{a}user Verlag,  A series of advanced textbooks in mathematics, Volume 4, 1992, MR1182792, Zbl 0767.26001.


\bibitem{MS} O. Munteanu and N. Sesum, {\em On gradient Ricci solitons}, Jour. Geom. Anal. 23
(2013), 539--561, MR3023848, Zbl 1275.53061.

\bibitem{NW} L. Ni and N. Wallach, {\em On a classification of the gradient shrinking solitons}, Math. Res. Lett.
15  no. 5 (2008), 941--955, MR2443993, Zbl 1158.53052.

\bibitem{P} G. Perelman, {\em The entropy formula for the Ricci flow and its geometric applications}, http://arxiv.org/pdf/math/0211159v1.pdf  (2002),  Zbl 1130.53001.

\bibitem{PW} P. Petersen and W. Wylie, {\em  Rigidity of gradient Ricci solitons}, Pacific J. Math. 241  (2009), 329--345,  MR2507581, Zbl 1176.53048.

\bibitem{PW2}  P. Petersen and W. Wylie, {\em On the classification of gradient Ricci solitons},  Geom. Topol. 14  no. 4 (2010), 2277--2300, MR2740647, Zbl 1202.53049.

\bibitem{Sh} J. Shin, {\it On the classification of $4$-dimensional $(m, \rho)$ -quasi-Einstein manifolds with harmonic Weyl curvature},
Annals of Global Analysis and Geometry. v. {\bf 51}  Issue 4, (2017), 379--399,   MR3648997, Zbl 1369.53029.

\bibitem{Sh2} J. Shin, {\it Three-dimensional Ricci-degenerate Riemannian manifolds satisfying geometric equations}, Manuscripta Math. 169, no.3-4, (2022), 401--423,   MR4493644, Zbl 1504.53108.

\bibitem{SS} F. Schulze and M. Simon, {\em Expanding solitons with non-negative curvature operator coming out of cones}, Math. Zeit. 275(1-2) (2013), 625--639,  MR3101823,  Zbl :1278.53072.


\bibitem{WWW} J.Y. Wu, P. Wu and W. Wylie,   {\em Gradient shrinking Ricci solitons of half harmonic Weyl curvature},  Calc. Var. Partial Differential Equations 57 no. 5 (2018), Paper No. 141, 15 pp.,  MR3849152, Zbl 1401.53037.


\bibitem{Z} Z.H. Zhang, {\em Gradient shrinking solitons with vanishing Weyl tensor}, Pacific J. Math. 242 no. 1 (2009), 189-200,  MR2525510, Zbl 1171.53332.





\end{thebibliography}
\end{document}